\numberwithin{equation}{section} %% Comment out for sequentially-numbered
\numberwithin{figure}{section} %% Comment out for sequentially-numbered
 \theoremstyle{plain}
 \newtheorem{thm}{Theorem}
 \theoremstyle{definition}
 \newtheorem{defn}{Definition}[subsection]
 \theoremstyle{plain}
 \newtheorem{lem}[defn]{Lemma}
 \newtheorem{cor}[defn]{Corollary}
 \theoremstyle{remark}
 \newtheorem{rem}[defn]{Remark}
 \theoremstyle{definition}
 \newtheorem{example}[defn]{Example}
 \theoremstyle{plain}
 \newtheorem{prop}[defn]{Proposition}
\newcommand{\p}{\mathbb{P}}
\newcommand{\pr}{\mathrm{pr}}
\renewcommand{\k}{\mathrm{k}}
\newcommand{\kk}{\mathrm{\overline{k}}}
\newcommand{\tr}{\!\triangleright\!}%{\tr}%
\newcommand{\Pn}{\mathbb{P}^2}
\newcommand{\Aut}{\mathrm{Aut}}
\newcommand{\A}{\mathbb{A}}
\theoremstyle{remark}
\theoremstyle{definition}
\newtheorem{enavant}[defn]{}
\author{J\'er\'emy Blanc} 
\address{J\'er\'emy Blanc, Universit\"{a}t Basel, Mathematisches Institut, Rheinsprung $21$, CH-$4051$ Basel, Switzerland.} 
\email{Jeremy.Blanc@unibas.ch} 
\author{Adrien Dubouloz} 
\address{Adrien Dubouloz, Institut de Math\'ematiques de Bourgogne, Universit\'e de Bourgogne, 9 avenue Alain Savary - BP 47870, 21078 Dijon cedex, France} 
\email{Adrien.Dubouloz@u-bourgogne.fr}
\thanks{First author supported by the Swiss National Science Foundation grant no PP00P2\_128422 /1. This research was supported in part by ANR Grant "BirPol" ANR-11-JS01-004-01 and PHC Grant Germaine de Sta\"el 26470XE.}
\begin{document}
\title{Affine surfaces with a huge group of automorphisms}

\begin{abstract}
We describe a family of rational affine surfaces $S$ with huge groups of automorphisms in the following sense: the normal subgroup $\mathrm{Aut}(S)_{\mathrm{alg}}$ of $\mathrm{Aut}(S)$ generated by all algebraic subgroups of $\mathrm{Aut}(S)$ is not generated by any countable family of such subgroups, and the quotient $\mathrm{Aut}(S)/\mathrm{Aut}(S)_{\mathrm{alg}}$ contains a free group over an uncountable set of generators.
\end{abstract}
\maketitle

%{\bf MSC} 14R25, 14R20, 14R05, 14E05 \\ 

%{\bf Key words:} {affine surfaces}; {rational fibrations}; {automorphisms.}

\section*{Introduction}

The automorphism group of an algebraic curve defined of a field $k$
is always an algebraic group, of dimension at most $3$, the biggest
possible group being $\mathrm{Aut}(\mathbb{P}_{k}^{1})=\mathrm{PGL}(2,k)$. The situation is very different starting from dimension
$2$ even for complete or projective surfaces $S$: of course some
groups such as $\mathrm{Aut}(\mathbb{P}_{k}^{2})=\mathrm{PGL}(3,k)$
are still algebraic groups but in general $\mathrm{Aut}(S)$ only
exists as a group scheme locally of finite type over $k$ \cite{MaOo67}
and it may fail for instance to be algebraic group in the usual sense
because it has (countably) infinitely many connected components. This
happens for example for the automorphism group of the blow-up $S\to\mathbb{P}^{2}$
of the base-points of a general pencil of two cubics, which contains
a finite index group isomorphic to $\mathbb{Z}^{8}$, acting on the
pencil by translations. Note however that the existence of $\mathrm{Aut}(S)$
as a group scheme implies at least that $S$ has a largest connected
algebraic group of automorphisms: the identity component of $\mathrm{Aut}(S)$
equipped with its reduced structure. 

The picture tends to be much more complicated for non complete surfaces,
in particular affine ones. For instance the group $\mathrm{Aut}(\mathbb{A}_{k}^{2})$
of the affine plane $\mathbb{A}_{k}^{2}=\mathrm{Spec}(k[x,y])$ contains
algebraic groups of any dimension hence is very far from being algebraic.
In fact, the subgroups $$T_{n}=\left\{ (x,y)\mapsto(x,y+P(x)),P\in k[x],\,\deg P\leq n\right\} \simeq\mathbb{G}_{a,k}^{n+1}$$
of unipotent triangular automorphisms of degree at most $n$ form an increasing
family of connected subgroups of automorphism of $\mathbb{A}_{k}^{2}$
in the sense of \cite{Ram64} so that $\mathrm{Aut}(\mathbb{A}_{k}^{2})$
does not admit any largest connected algebraic group of automorphisms.
It is interesting to observe however that, as a consequence of Jung's
Theorem \cite{Jun}, $\mathrm{Aut}(\mathbb{A}_{k}^{2})$ is generated
by a countable family of connected algebraic subgroups, namely $\mathrm{GL}(2,k)$
and the above triangular subgroups $T_{n}$, $n\geq1$. A similar
phenomenon turns out to hold for other classical families of rational
affine surfaces with large groups of automorphisms: for instance
for the smooth affine quadric in $\mathbb{A}^{3}$ with equation
$xy-z^{2}+1=0$ and more generally for all normal affine surfaces
defined by an equation of the from $xy-P(z)=0$ where $P(z)$ is a
non constant polynomial, whose automorphism groups have been described
explicitly first by Makar-Limanov \cite{ML90} by purely algebraic methods
and more recently by the authors \cite{first} in terms of the birational
geometry of suitable projective models. 

All examples above share the common property that the normal subgroup
$\mathrm{Aut}(S)_{\mathrm{Alg}}\subset\mathrm{Aut}(S)$ generated
by all algebraic subgroups of $\mathrm{Aut}(S)$ is in fact generated
by a countable family of such subgroups and that the quotient $\mathrm{Aut}(S)/\mathrm{Aut}(S)_{\mathrm{Alg}}$
is countable. So one may wonder if such a property holds in general
for quasi-projective surfaces. It turns out that there exists
normal affine surfaces which have a much bigger group of automorphisms
and the purpose of this article is to describe explicitly one 
family of such surfaces. Our main result can be summarized as follows:
\begin{thm}\label{MainThm}
Let $k$ be an uncountable field and let $P,Q\in k[w]$ be polynomials
having at least $2$ distinct roots in an algebraic closure $\overline{k}$
ok $k$ and such that $P(0)\neq0$. Then for the affine surface $S$
in $\mathbb{A}^{4}=\mathrm{Spec}(k[x,y,u,v])$ defined by the system
of equations
\[
\begin{cases}
yu= & xP(x)\\
vx= & uQ(u)\\
yv= & P(x)Q(y)
\end{cases}
\]
the following holds:

1) The normal subgroup $\mathrm{Aut}(S)_{\mathrm{Alg}}\subset\mathrm{Aut}(S)$
is not generated by a countable union of algebraic groups,

2) The quotient $\mathrm{Aut}(S)/\mathrm{Aut}(S)_{\mathrm{Alg}}$
contains a free group over an uncountable set of generators. 
\end{thm}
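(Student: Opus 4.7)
\emph{Proof proposal.}
The approach starts from the observation that the polynomial $xP(x)$ in the first equation has $P(0)\neq 0$ and is divisible by $x$, while the system is tailored so that the projection $\pi_y\colon S\to\mathbb{A}^1$, $(x,y,u,v)\mapsto y$, is an $\mathbb{A}^1$-fibration with a single degenerate fiber, namely the one over $y=0$. First I would compute this degenerate fiber explicitly: it is a configuration of affine lines, one for each nonzero root of $P$ (on which $u$ is a coordinate), one ``central'' component $\{x=u=0\}$ with coordinate $v$, plus one component for each nonzero root of $Q$ (again with coordinate $v$). The hypothesis that $P$ and $Q$ each have at least two distinct roots guarantees that this configuration is rich enough. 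A symmetric $\mathbb{A}^1$-fibration $\pi_x\colon S\to\mathbb{A}^1$ arises from the involution swapping $(x,y,P)$ and $(u,v,Q)$, and its degenerate fiber has an analogous description.

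With these two fibrations in hand, I would look for elementary ``shear'' automorphisms: for each fibration $\pi$, the subgroup $\mathrm{Aut}(S,\pi)$ of automorphisms compatible with $\pi$ acts on each smooth fiber, and the reducible structure of $\pi^{-1}(0)$ permits the construction of large subgroups that act nontrivially on a single component at a time. For each root $\alpha$ of $P$ and each $t\in k$ this yields an explicit element $\sigma_\alpha^t\in\mathrm{Aut}(S,\pi_y)$, and by symmetry elements $\tau_\beta^t\in\mathrm{Aut}(S,\pi_x)$ for each root $\beta$ of $Q$. Combining these into an amalgamated product of the form $\mathrm{Aut}(S,\pi_y)\ast_{\mathrm{Aut}(S,\pi_y)\cap\mathrm{Aut}(S,\pi_x)}\mathrm{Aut}(S,\pi_x)$ follows the Bass--Serre philosophy already used for $\mathrm{Aut}(\mathbb{A}^{2})$ via Jung's theorem and for the Danielewski-type surfaces treated in \cite{first}.

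From this amalgamated decomposition one extracts both conclusions. For item 2), any reduced word alternating between elements of $\mathrm{Aut}(S,\pi_y)\setminus\mathrm{Aut}(S,\pi_x)$ and $\mathrm{Aut}(S,\pi_x)\setminus\mathrm{Aut}(S,\pi_y)$ is nontrivial, and the key point is to exhibit an uncountable family of such words whose images in $\mathrm{Aut}(S)/\mathrm{Aut}(S)_{\mathrm{Alg}}$ generate a free group. For this I would argue that any connected algebraic subgroup of $\mathrm{Aut}(S)$ must preserve one of the two fibrations, a vertex-stabilizer property in Bass--Serre language; consequently the alternating words act as hyperbolic isometries on the Bass--Serre tree while all elements of $\mathrm{Aut}(S)_{\mathrm{Alg}}$ act elliptically, and a ping-pong argument, using that $k$ is uncountable to index the family, produces the free group on uncountably many generators. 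For item 1), conjugating a fixed one-parameter subgroup of $\mathrm{Aut}(S,\pi_y)$ by an uncountable family of alternating words yields uncountably many pairwise ``incompatible'' maximal connected algebraic subgroups, which cannot simultaneously be contained in the group generated by any countable collection of algebraic subgroups.

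The principal obstacle I anticipate is the verification that $\mathrm{Aut}(S)$ really is generated by the automorphisms preserving one of the two fibrations, i.e.\ the analogue of Jung's theorem for $S$. This requires a careful birational analysis: choosing a smooth projective completion $X$ of $S$, describing the boundary divisor $B=X\setminus S$ as an SNC configuration of rational curves, and showing that any automorphism of $S$, viewed as a birational self-map of $X$, admits a Sarkisov-style decomposition into elementary links each of which preserves one of the two fibrations. A secondary difficulty is the rigidity statement that connected algebraic subgroups of $\mathrm{Aut}(S)$ preserve a fibration; since $S$ is rational and affine one cannot quote non-uniruledness and instead must analyze the possible $\mathbb{G}_a$- and $\mathbb{G}_m$-actions on $S$ directly, using again the explicit form of the boundary. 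Once these two ingredients are in place, the ping-pong and conjugation arguments above close the proof.
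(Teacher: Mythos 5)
There is a genuine gap, and it is located exactly where you place your confidence: the claim that $\mathrm{Aut}(S)$ is generated by the stabilizers of \emph{two} $\mathbb{A}^1$-fibrations and decomposes as an amalgamated product $\mathrm{Aut}(S,\pi_y)\ast_{C}\mathrm{Aut}(S,\pi_v)$. For the surfaces of the theorem (those with $P(0)\neq 0$, case $\mathbf{I}$ of the paper) this is false, and in fact it \emph{cannot} be true: each group $\mathrm{Aut}(S,\pi)$ is a countable increasing union of algebraic subgroups, so if $\mathrm{Aut}(S)$ were generated by two such groups then $\mathrm{Aut}(S)_{\mathrm{Alg}}$ would equal $\mathrm{Aut}(S)$ and would be generated by a countable family of algebraic subgroups — the exact negation of conclusion 1). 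The amalgamated-product picture you describe is precisely what happens for the \emph{other} class of surfaces in the paper (type $\mathbf{III}$, where $P(0)=Q(0)=0$ and $\mathrm{Aut}(S)\cong J_y\star_{\mathrm{Diag}}J_v$), and those surfaces are excluded from the theorem for this very reason. The key phenomenon you are missing is that when $P(0)\neq 0$ the surface $S$ carries \emph{uncountably many} pairwise inequivalent $\mathbb{A}^1$-fibrations: the reversions out of a given standard pair $(X,B)$ are parametrized by the $k$-rational points of an affine line, and for $P(0)\neq 0$ distinct centers generically produce non-isomorphic target pairs, hence inequivalent fibrations (this is the content of Lemma~\ref{Lem:Rev01k}, Proposition~\ref{Prop:EquivReversions} and Proposition~\ref{Prop:Tapis}). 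Consequently the graph $\mathcal{F}_S$ of fibrations has uncountably many vertices and uncountably many independent cycles of length $4$, and $\mathrm{Aut}(S)$ is the fundamental group of a graph of groups over this graph — not an amalgam over a single edge. Your anticipated ``analogue of Jung's theorem'' (generation by the two fibration stabilizers) is exactly the statement that fails.

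Once this is corrected, the two conclusions come out differently from your ping-pong argument. The free group in 2) is not built from alternating words in two vertex groups acting on a Bass--Serre tree; it is built from uncountably many loops $\zeta_a$ of length $4$ in $\mathcal{F}_S$ (compositions of four reversions through the vertices $[P(w),Q(w)]$, $[Q(w),P(w)]$, $[P(w+a),Q(w)]$, $[Q(w+a),P(w)]$), whose freeness is checked by verifying that the relevant compositions of reversions are reduced in the sense of Proposition~\ref{mainfirst}. These survive in the quotient because the image in $\Pi_1(\mathcal{F}_S)$ of any algebraic subgroup consists of loops of length at most $1$ (Proposition~\ref{Prop:AlgGroups} and Remark~\ref{RemaLoop1}) — this replaces your elliptic-versus-hyperbolic dichotomy. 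For 1), the argument is a counting one: a countable family of algebraic subgroups involves only countably many vertices of $\mathcal{F}_S$, so one can pick a fibration class avoided by all of them and an algebraic group of fibered modifications preserving it; this group lies in $\mathrm{Aut}(S)_{\mathrm{Alg}}$ but not in the subgroup generated by the given countable family. Your idea of conjugating a one-parameter subgroup by uncountably many words is in the right spirit, but without the uncountable vertex set there is no way to guarantee the conjugates are ``new'': in an amalgam of two countable unions of algebraic groups every such conjugate would already lie in the countably generated part.
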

Note that that the surfaces described in Theorem~\ref{MainThm} can be chosen
to be either singular or smooth, depending on the multiplicity of
the roots of $P$ and $Q$. 

The result is obtained from a systematic use of the methods developed
in \cite{first} for the study of affine surfaces admitting many $\mathbb{A}^{1}$-fibrations.
By virtue of pioneering work of Gizatullin \cite{Gi}, the latter
essentially coincide with surfaces admitting normal projective completions
$X$ for which the boundary divisor is a so-called zigzag, that is,
a chain $B$ of smooth proper rational curves supported in the smooth
locus of $X$. These have been extensively studied by Gizatullin and Danilov \cite{Gi,Gi-Da1,Gi-Da2} during the seventies and more recently by the authors
in \cite{first}. 

An important invariant of a zigzag is the sequence of self-intersections of its components, called its type, which in our context can be chosen to be of the form $(0,-1,-a_1,\dots,-a_r)$, where the $a_i\geq 2$ are a possibly empty sequence of integers. In this setting, the simplest possible zigzag has type $(0,-1)$ and the corresponding affine surface is the affine plane $\mathbb{A}^{2}$ viewed as the complement in the Hirzebruch surface $\rho_1:\mathbb{F}_1\rightarrow \mathbb{P}^1$ of the union of a fiber of $\rho_1$, with self-intersection $0$, and the exceptional section of $\rho_1$ with self-intersection $-1$. 
The next family in terms of the number of irreducible components in the boundary zigzag $B$ consists of types $(0,-1,-a_1)$, $a_1\geq 2$. These correspond to the normal hypersurfaces of $\A^3$ defined by equations of the form $xy-P(z)=0$ with $\deg(P)=a_1$ which were studied in detail in \cite{first}.

The present article is in fact devoted to the study of the next family, that is, affine surfaces corresponding to zigzags of types $(0,-1,-a_1,a_2)$, where $a_1,a_2\ge 2$. The surfaces displayed in Theorem~\ref{MainThm} provide explicit realisations of general members of this family as subvarieties of $\mathbb{A}^4$, but we describe more generally isomorphism classes and automorphism groups of all surfaces in the family. 

From this point view, Theorem~\ref{MainThm} above says that a relatively
minor increasing of the complexity of the boundary zigzag has very important
consequences on the geometry and the automorphism group of the inner
affine surface. The next cases, that is zigzags of type $(0,-1,-a_1,\dots,-a_r)$ with $r\ge 3$, could be studied in exactly the same way as we do here; the amount of work needed would just be bigger and more technical.\\

The article is organized as follows: in the first section we review
the main techniques introduced in \cite{first} to study normal affine
surfaces completable by a zigzag in terms of the birational geometry
of suitable projective models of them, called standard pairs. We also
characterize the nature of algebraic subgroups of their automorphism
groups in this framework (Proposition \ref{Prop:AlgGroups}). The
next two sections are devoted to the study of isomorphism classes of the
surfaces associated to zigzags of type $(0,-1,-a_1,-a_2)$ and the description of isomorphisms
between these in terms of elementary birational links between the
corresponding standard pairs. In the last section, we apply these
intermediate results to obtain a the structure of their
automorphism groups and of their $\A^1$-fibrations. Theorem~\ref{MainThm} is then a direct consequence of this precise description.

\section{Recollection on standard pairs and their birational geometry}

\subsection{Standard pairs and associated rational fibrations}
\indent\newline\noindent Recall that a \emph{zigzag} on a normal projective surface $X$ is
a connected SNC-divisor $B$, supported in the smooth locus of $X$, with irreducible components isomorphic to the projective line over $\k$ and whose dual graph is a chain. In what follows we always assume that the irreducible components  $B_i$, $i=0,\ldots,r$ of $B$ are ordered in such a way that \[
B_{i}\cdot B_{j}=\begin{cases}
1 & \textrm{if }\left|i-j\right|=1,\\
0 & \textrm{if }\left|i-j\right|>1.\end{cases}\]
and we write $B=B_0\tr B_1 \tr \cdots \tr B_r$ for such an oriented zigzag. The sequence of integers $\left((B_0)^2,\ldots,(B_r)^2\right)$ is then called the \emph{type} of $B$.

\begin{defn}\label{StandZigDef}
A \emph{standard pair}\footnote{these were called 1-standard in \cite{first}} is a pair $(X,B)$ consisting of a normal rational projective surface $X$ and an ordered zigzag $B$ that can be written as $B=F\tr C \tr E$ where $F$ and $C$ are smooth irreducible rational curves with self-intersections $F^2=0$ and
$C^2=-1$, and where $E=E_{1}\tr\cdots\tr E_{r}$ is a (possibly empty) chain of irreducible rational curves with self-intersections $(E_i)^2\leq-2$ for every $i=1,\ldots,r$. The \emph{type} of the the pair $(X,B)$ is the type $(0,-1,-a_1,\ldots ,-a_r)$ of its ordered zigzag $B$.    
\end{defn}

\begin{enavant} \label{AffineFiberedDef} The underlying projective surface of a standard pair $\left(X,B=F\tr C\tr E\right)$ comes equipped with a rational fibration $\bar{\pi}=\bar{\pi}_{\left|F\right|}:X\rightarrow\mathbb{P}^{1}$ defined by the complete linear system $\left|F\right|$. The latter restricts on the quasi-projective surface  
$S=X\setminus B$ to  a faithfully flat morphism $\pi:S\rightarrow \mathbb{A}^1$ with generic fiber isomorphic
to the affine line over the function field of $\mathbb{A}^{1}$, called an \emph{$\mathbb{A}^1$-fibration}. We use the notations $(X,B,\bar{\pi})$ and $(X\setminus B,\bar{\pi}\mid_{X\setminus B})$ (or simply, $(X\setminus B, \pi)$ when we consider the corresponding surfaces as equipped with these respective fibrations). 

When $B$ is the support of an ample divisor, $S$ is an affine surface and $\pi:S\rightarrow\mathbb{A}^{1}$ has a unique degenerate fiber $\pi^{-1}\left(\bar{\pi}\left(E\right)\right)$ constisting of a nonempty disjoint union of affine lines,  possibly defined over finite algebraic extensions of $\k$,  when equipped with its reduced
scheme structure. Furthermore, if any, the singularities of $S$ are all supported on the degenerate
fiber of $\pi$  and admit a minimal resolution whose exceptional set consists of a chain of rational curves possibly defined over a finite algebraic extension of $\k$. In particular, if $\k$ is algebraically closed of characteristic $0$, then $S$ has at worst Hirzebruch-Jung cyclic quotient singularities. Furthermore, according to \cite[Lemma 1.0.7]{first}, the minimal resolution of singularities of singularities $\mu:\left(Y,B\right)\rightarrow\left(X,B\right)$ of the pair $(X,B)$ can be obtained  from the first Hirzebruch surface $\rho\colon \mathbb{F}_1=\mathbb{P}(\mathcal{O}_{\p^1}\oplus \mathcal{O}_{\p^1}(1))\to \p^1$ by a uniquely determined sequence of blow-ups $\eta\colon Y\to \mathbb{F}^1$ restricting to isomorphisms outside 
the degenerate fibers of $\bar{\pi}\circ\mu$ in such a way that we have a commutative diagram
 \[\xymatrix@R=1mm@C=1cm{&& Y \ar[dll]_{\mu} \ar[dd]^{\mu\circ \bar{\pi}} \ar[drr]^{\eta} \\ X \ar[drr]_{\bar{\pi}}& & & &\mathbb{F}_1 \ar[dll]^{\rho} \\ && \mathbb{P}^1. }\]
\end{enavant}

\subsection{Birational maps of standard pairs}\label{links-recolec}
\indent\newline\noindent A birational map $\phi: \left(X,B\right)\dasharrow\left(X',B'\right)$ between standard pairs is a birational map $X\dasharrow X'$ which restricts to an  isomorphism $X\setminus B\stackrel{{\sim}}{\rightarrow} X'\setminus B'$. It is an isomorphism of pairs if it is moreover an isomorphism from $X$ to $X'$. The birational maps between standard pairs
play a central role in the study of the automorphism groups of $\mathbb{A}^1$-fibered affine surfaces as in \ref{AffineFiberedDef} above. The main result of \cite{first} asserts the existence of a decomposition of every such birational map into a finite sequence of "basic" birational maps of standard pairs called \emph{fibered modifications} and \emph{reversions} which can be defined respectively as follows:

\begin{defn} \label{FiberedModif} (\cite[Definition 2.2.1 and Lemma 2.2.3]{first}). A \emph{fibered modification} is a strictly birational map of standard pairs  $$\phi: (X,B=F\tr C \tr E) \dashrightarrow (X',B'=F' \tr C' \tr E')$$ which induces an isomorphism of $\mathbb{A}^1$-fibered quasi-projective surfaces \[\xymatrix@R=0.4cm@C=1cm{S=X\setminus B \ar[d]_{\bar{\pi}\mid_{S}} \ar[r]^-{\sim}_-{\phi} & S'=X'\setminus B' \ar[d]^{\bar{\pi}'\mid_{S'}} \\ \mathbb{A}^1 \ar[r]^{\sim} & \mathbb{A}^1}\] where $\bar{\pi}\mid_S$ and $\bar{\pi}'\mid_S'$ denotes the restrictions of the rational pencils defined by the complete linear systems $\left|F\right|$ and $\left|F'\right|$ on $X$ and $X'$ respectively. Equivalently, with the notation of \S \ref{AffineFiberedDef}, the birational map $\left(\mu'\right)^{-1}\circ\phi\circ\mu:Y\dashrightarrow Y'$ induced by $\phi$ is the lift via $\eta$ and $\eta'$ of a non affine isomorphism of $\mathbb{A}^{1}$-fibered
affine surfaces \[\xymatrix@R=0.4cm@C=1cm{ \mathbb{A}^2=\mathbb{F}_1\setminus (\eta(F)\cup\eta(C)) \ar[d]_{\rho\mid_{\mathbb{A}^2}} \ar[r]_-{\Psi}^-{\sim} & \mathbb{A}^2=\mathbb{F}_1\setminus (\eta'(F')\cup\eta'(C')) \ar[d]^{\rho\mid_{\mathbb{A}^2}} \\ \mathbb{A}^1 \ar[r]_{\psi}^{\sim} & \mathbb{A}^1 }\] which maps isomorphically the base-points of $\eta^{-1}$  onto those of $\eta'^{-1}$. 
\end{defn}

\begin{defn} \label{DescriptionReversion1} (\cite[\S 2.3]{first}). A \emph{reversion} is a very special kind of birational map of standard pairs uniquely determined  by the choice of a $\k$-rational point $p\in F\setminus E$ and obtained by the following construction :

 Starting from a pair $(X,B=F\tr C\tr E)$ of type $(0,-1,-a_1,\ldots , -a_r)$, the contraction of the $(-1)$-curve $E$ followed by the blow-up of $p\in F\setminus E$ yields a birational map $\theta_0:(X,B)\dasharrow (X_0,B_0)$ to a pair with a zigzag of type $(-1,0, -a_1+1,\ldots ,-a_r)$. Preserving the fibration given by the $(0)$-curve, one can then construct an unique birational map $\varphi_1:(X_0,B_0)\dasharrow (X_1',B_1')$, where $B_1'$ is a zigzag of type  $(-a_1+1,0,-1,-a_2,\ldots, -a_r)$. The blow-down of the $(-1)$-curve followed by the blow-up of the point of intersection of the $(0)$-curve with the curve immediately after it yields a birational map $\theta_1:(X_1',B_1')\dasharrow (X_1,B_1)$ where $B_1$ is a zigzag of type $(-a_1,-1,0,-a_2+1,\ldots,-a_r)$. Repeating this procedure eventually yields birational maps $\theta_0,\varphi_1,\theta_1,...,\varphi_r,\theta_r$ described by the following figure.

\begin{center}\begin{tabular}{l}
\begin{pspicture}(0,0.4)(4,1.8)%taille du bloc
\psline(0.8,1)(3.2,1)
\psline[linestyle=dashed](0,1)(0.8,1)
\rput(0,1){\textbullet}\rput(-0.05,0.7){{\scriptsize $-a_r$}}
\rput(0.8,1){\textbullet}\rput(0.75,0.7){{\scriptsize $-a_2$}}
\rput(1.6,1){\textbullet}\rput(1.55,0.7){{\scriptsize $-a_1$}}
\rput(2.4,1){\textbullet}\rput(2.35,0.7){{\scriptsize $-1$}}
\rput(3.2,1){\textbullet}\rput(3.2,0.7){{\scriptsize $0$}}
\rput(3.6,1){{\scriptsize $\theta_0$}}
\parabola[linestyle=dashed]{->}(3.3,0.9)(3.68,0.8)
\end{pspicture}
\begin{pspicture}(0,0.4)(3.76,1.5)%taille du bloc
\psline(0.8,1)(3.2,1)
\psline[linestyle=dashed](0,1)(0.8,1)
\rput(0,1){\textbullet}\rput(-0.05,0.7){{\scriptsize $-a_r$}}
\rput(0.8,1){\textbullet}\rput(0.75,0.7){{\scriptsize $-a_2$}}
\rput(1.6,1){\textbullet}\rput(1.55,0.7){{\scriptsize $-a_1\!\!+\!\!1$}}
\rput(2.4,1){\textbullet}\rput(2.4,0.7){{\scriptsize $0$}}
\rput(3.2,1){\textbullet}\rput(3.15,0.7){{\scriptsize $-1$}}
\rput(3.53,1.55){{\scriptsize $\varphi_1$}}
\parabola[linestyle=dashed]{->}(3.22,1.15)(3.5,1.3)
\end{pspicture}
\begin{pspicture}(0,0.4)(4,1.5)%taille du bloc
\psline(0.8,1)(3.2,1)
\psline[linestyle=dashed](0,1)(0.8,1)
\rput(0,1){\textbullet}\rput(-0.05,0.7){{\scriptsize $-a_r$}}
\rput(0.8,1){\textbullet}\rput(0.75,0.7){{\scriptsize $-a_2$}}
\rput(1.6,1){\textbullet}\rput(1.55,0.7){{\scriptsize $-1$}}
\rput(2.4,1){\textbullet}\rput(2.4,0.7){{\scriptsize $0$}}
\rput(3.2,1){\textbullet}\rput(3.15,0.7){{\scriptsize $-a_1\!\!+\!\! 1$}}
\rput(3.6,1){{\scriptsize $\theta_1$}}\parabola[linestyle=dashed]{->}(3.3,0.9)(3.68,0.8)\end{pspicture}
\begin{pspicture}(0,0.4)(3.76,1.5)%taille du bloc
\psline(0.8,1)(3.2,1)
\psline[linestyle=dashed](0,1)(0.8,1)
\rput(0,1){\textbullet}\rput(-0.05,0.7){{\scriptsize $-a_r$}}
\rput(0.8,1){\textbullet}\rput(0.75,0.7){{\scriptsize $-a_2\!\!+\!\! 1$}}
\rput(1.6,1){\textbullet}\rput(1.6,0.7){{\scriptsize $0$}}
\rput(2.4,1){\textbullet}\rput(2.35,0.7){{\scriptsize $-1$}}
\rput(3.2,1){\textbullet}\rput(3.15,0.7){{\scriptsize $-a_1$}}
\rput(3.53,1.55){{\scriptsize $\varphi_2$}}
\parabola[linestyle=dashed]{->}(3.22,1.15)(3.5,1.3)\end{pspicture}
\\
\begin{pspicture}(0,0.4)(4,1.5)%taille du bloc
\psline(0.8,1)(3.2,1)
\psline[linestyle=dashed](0,1)(0.8,1)
\rput(0,1){\textbullet}\rput(-0.05,0.7){{\scriptsize $-a_r$}}
\rput(0.8,1){\textbullet}\rput(0.75,0.7){{\scriptsize $-1$}}
\rput(1.6,1){\textbullet}\rput(1.6,0.7){{\scriptsize $0$}}
\rput(2.4,1){\textbullet}\rput(2.3,0.7){{\scriptsize $-a_2\!\!+\!\! 1$}}
\rput(3.2,1){\textbullet}\rput(3.15,0.7){{\scriptsize $-a_1$}}
\rput(3.6,1){{\scriptsize $\theta_2$}}\parabola[linestyle=dashed]{->}(3.3,0.9)(3.68,0.8)\end{pspicture}\begin{pspicture}(2.6,0.4)(3.76,1.5)%taille du bloc
\psline[linestyle=dotted](2.75,1)(3.05,1)% la ligne pointille qui lie les suites de liens lmentaires
\parabola[linestyle=dashed]{->}(3.22,1.15)(3.5,1.3)
\rput(3.53,1.55){{\scriptsize $\varphi_r$}}
\end{pspicture}
\begin{pspicture}(0,0.4)(4,1.5)%taille du bloc
\psline(0,1)(1.6,1)
\psline(2.4,1)(3.2,1)
\psline[linestyle=dashed](1.6,1)(2.4,1)
\rput(0,1){\textbullet}\rput(-0.05,0.7){{\scriptsize $-1$}}
\rput(0.8,1){\textbullet}\rput(0.8,0.7){{\scriptsize $0$}}
\rput(1.6,1){\textbullet}\rput(1.55,0.7){{\scriptsize $-a_r\!\!+\!\! 1$}}
\rput(2.4,1){\textbullet}\rput(2.35,0.7){{\scriptsize $-a_2$}}
\rput(3.2,1){\textbullet}\rput(3.15,0.7){{\scriptsize $-a_1$}}
\rput(3.6,1){{\scriptsize $\theta_r$}}\parabola[linestyle=dashed]{->}(3.3,0.9)(3.68,0.8)\end{pspicture}
\begin{pspicture}(0,0.4)(3.5,1.5)%taille du bloc
\psline(0,1)(1.6,1)
\psline(2.4,1)(3.2,1)
\psline[linestyle=dashed](1.6,1)(2.4,1)
\rput(0,1){\textbullet}\rput(0,0.7){{\scriptsize $0$}}
\rput(0.8,1){\textbullet}\rput(0.75,0.7){{\scriptsize $-1$}}
\rput(1.6,1){\textbullet}\rput(1.55,0.7){{\scriptsize $-a_r$}}
\rput(2.4,1){\textbullet}\rput(2.35,0.7){{\scriptsize $-a_2$}}
\rput(3.2,1){\textbullet}\rput(3.15,0.7){{\scriptsize $-a_1$}}
\end{pspicture}
\end{tabular}\end{center}

\noindent The \emph{reversion of $(X,B)$ with center at $p$} is then the strictly birational map of standard pairs  $$\phi=\theta_r\varphi_r\cdots\theta_1\varphi_0\theta_0:(X,B)\dasharrow(X_r,B_r)=(X',B').$$ Note that the above construction is symmetric so that the inverse $\phi^{-1}:(X',B')\dashrightarrow (X,B)$ of $\phi$ is again a reversion, with center at its unique proper base point $p'=\phi(E)\in F'\setminus E'$.
\end{defn}

With these definitions, the decomposition results established in \cite[Theorem 3.0.2 and Lemma 3.2.4]{first} can be summarized as follows:  

\begin{prop}\label{mainfirst}
 For a birational map of standard pairs $\phi: \left(X,B\right)\dasharrow\left(X',B'\right)$, the following holds
\begin{enumerate}
\item
 The map $\phi$ is either an isomorphism of pairs or it can decomposed into a finite sequence \[\phi=\phi_{n}\circ\cdots\circ\phi_{1}:\left(X,B\right)=\left(X_{0},B_{0}\right)\stackrel{\phi_{1}}{\dashrightarrow}\left(X_{1},B_{1}\right)\stackrel{\phi_{2}}{\dashrightarrow}\cdots\stackrel{\phi_{n}}{\dashrightarrow}\left(X_{n},B_{n}\right)=\left(X',B'\right),\]
where each $\phi_i$ is either a fibered modification or a reversion.
\item
 If $\phi$ is not an isomorphism then a decomposition as above of minimal length is unique up to isomorphisms of the intermediate pairs occuring in the decomposition. Furthermore, a decomposition of minimal length is characterized by the property that it is \emph{reduced}, which means  that for every $i=1,\dots,n-1$ the induced birational map $\phi_{i+1}\phi_i:(X_{i-1},B_{i-1})\dashrightarrow (X_{i+1},B_{i+1})$ is of minimal length $2$. 
\item A composition $\phi_{i+1}\phi_i:(X_{i-1},B_{i-1})\dashrightarrow (X_{i+1},B_{i+1})$ as above is not reduced if and only if one of the following holds:
\begin{enumerate}
\item
$\phi_i$ and $\phi_{i+1}$ are both fibered modifications;
\item
$\phi_i$ and $\phi_{i+1}$ are both reversions, and $\phi_{i+1}$ and $(\phi_{i})^{-1}$ have the same proper base-point;
\item
$\phi_i$ and $\phi_{i+1}$ are both reversions, $\phi_{i+1}$ and $(\phi_{i})^{-1}$ do not have the same proper base-point  but each irreducible component of $B_{i-1}$ $($equivalently $B_{i+1})$ has self-intersection $\ge -2$;
\end{enumerate}
In case $(a)$, $\phi_{i+1}\phi_i$ is either a fibered modification $($length $1)$ or an automorphism of pairs $($length $0)$. In case $(b)$, $\phi_{i+1}\phi_i$ is an automorphism of pairs $($length $0)$, and in case $(c)$ it is a reversion $($length $1)$.
\end{enumerate}
\end{prop}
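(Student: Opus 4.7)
The plan is to reduce the statement to an analysis of how $\phi$ transforms the $\A^1$-fibration $\pi: S \to \A^1$. A standard pair is essentially the datum of an $\A^1$-fibered affine surface together with a marked projective completion whose boundary has the prescribed chain structure, so the key question is whether $\phi$ is compatible with the source fibration or induces a genuinely different $\A^1$-fibration on $S'$.

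For part (1), I would resolve $\phi$ by a minimal sequence of blow-ups $Z \to Y$ and $Z \to Y'$ of the underlying smooth projective models obtained from $\mathbb{F}_1$ via the diagrams in \ref{AffineFiberedDef}, and track how the proper transform of the fiber $F$ evolves. When $F$ is sent to a fiber of $\bar{\pi}'$, Definition \ref{FiberedModif} directly identifies $\phi$ as either a fibered modification or an isomorphism of pairs; otherwise, the geometry forces the existence of a reversion $\psi$ with center at a suitable $\k$-rational point of $F$ such that $\phi\circ\psi^{-1}$ has strictly smaller complexity, measured by the number of blow-ups in the minimal resolution. Induction on this complexity yields the desired decomposition. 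Part (3) then follows by directly analyzing each pair of consecutive moves: two fibered modifications compose in a fiber-preserving way and must again give a fibered modification or an isomorphism of pairs (case (a)); two reversions $\phi_i, \phi_{i+1}$ whose centers coincide at $\phi_i(E)\in F_i\setminus E_i$ cancel each other by the symmetry built into the sequence of moves in \ref{DescriptionReversion1} (case (b)); the subtler case (c) follows from the observation that when every inner component of $B_{i-1}$ has self-intersection $\geq -2$, the two chains of elementary transformations realizing $\phi_i$ and $\phi_{i+1}$ can be absorbed into those of a single reversion. Part (2) is then deduced by iteratively shortening any non-reduced segment via (3), together with the rigidity observation that fibered modifications and reversions are uniquely determined by their proper base-points.

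The hard part will be controlling the induction in (1): one must define a complexity invariant that genuinely decreases after subtracting off a reversion, and show that the chosen reversion stays within the class of standard pairs. In particular, one must verify that its center can always be taken to be a $\k$-rational (not merely $\kk$-rational) point of $F\setminus E$, and that the output zigzag still satisfies the numerical constraints of Definition \ref{StandZigDef}. A secondary obstacle is the uniqueness statement in (2), which demands that any two reduced decompositions of $\phi$ differ only by insertion of isomorphisms of the intermediate pairs; this rests on showing that at each step the next elementary map is rigidly determined by local data — fiber-preservation together with proper base-points — and cannot be exchanged for a different one without breaking the reduced condition.
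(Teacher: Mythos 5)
First, note that the paper does not prove this proposition at all: it is explicitly a summary of \cite[Theorem 3.0.2 and Lemma 3.2.4]{first}, so there is no in-paper argument to compare against. Your sketch does follow the general philosophy of that reference (resolve the map, track the image of $F$, peel off one elementary link at a time by induction on a complexity invariant), but as written it has genuine gaps that go beyond the ones you flag yourself.

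The most serious gap concerns the ``only if'' direction of part (3), which you never address: you argue that cases (a), (b), (c) produce non-reduced compositions, but the proposition also asserts these are the \emph{only} ways a composition of two links can have length $<2$. Concretely, you would need to show that a fibered modification followed by a reversion (in either order) always has length exactly $2$, and that two reversions with distinct proper base-points compose to something of length $2$ whenever some component of the boundary has self-intersection $\le -3$. These are computations about base-point configurations that your sketch simply does not contain, and without them the characterization ``minimal $\Leftrightarrow$ reduced'' in part (2) cannot be extracted. Relatedly, your derivation of (2) from (3) is too quick: ``iteratively shortening non-reduced segments'' only shows that a minimal decomposition is reduced, not the converse. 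The converse is a local-to-global statement --- that pairwise length-$2$ compositions force the whole word to have minimal length --- which is the genuinely hard part of \cite[Theorem 3.0.2]{first} (morally a ``no cancellation at a distance'' property, proved there by controlling how proper base-points propagate through the word, in the spirit of the proof of Jung's theorem). Your appeal to ``rigidity'' of the elementary maps does not substitute for this, nor does it yield uniqueness up to isomorphism of the intermediate pairs, which requires a confluence-type argument. Finally, in part (1) the assertion that when $F$ is not sent to a fiber ``the geometry forces the existence of a reversion $\psi$ with $\k$-rational center decreasing the complexity'' is precisely the crux of the existence statement and is left entirely unargued; in particular the $\k$-rationality of the center and the verification that the intermediate pairs remain standard (which you correctly identify as obstacles) are not optional refinements but part of what must be proved.
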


Note that starting from a pair $(X,B)$ of type $(0,-1,-a_1,\ldots ,-a_r)$, the type of the pairs which appear in the sequence are either of the same type or of type $(0,-1,-a_r,\ldots ,-a_1)$. In particular, the fact of having an irreducible curve of self-intersection $\le -3$ in the boundary only depends of the surface $X\backslash B$.

\subsection{Graphs of $\mathbb{A}^1$-fibrations and associated graphs of groups}\label{SubSecGraph}
\indent\newline\noindent The existence of the above decomposition of birational maps between standard pairs into sequences of fibered modifications and reversions enables to associate to every normal affine surface $S$ completable by a standard pair an oriented  graph which encodes equivalence classes of $\mathbb{A}^1$-fibrations on $S$ and links between these. This graph $\mathcal{F}_S$ is defined as follows (see \cite[Definition 4.0.5]{first}).

\begin{defn}\label{A1fibGraph} Given a normal affine surface $S$ completable by a standard pair we let $\mathcal{F}_S$ be the oriented graph with the following vertices and edges:

a) A vertex of $\mathcal{F}_S$ is an equivalence class of pairs $(X,B)$ such that $X\setminus B\cong S$, where two $1$-standard pairs $(X_1,B_1,\overline{\pi_1})$, $(X_2,B_2,\overline{\pi_2})$ define the same vertex if and only if the $\mathbb{A}^{1}$-fibered surfaces $(X_1\setminus B_1,\pi_1)$ and $(X_2\setminus B_2,\pi_2)$ are isomorphic.

b) Any arrow of $\mathcal{F}_S$ is an equivalence class of reversions. If $\phi:(X,B)\dasharrow (X',B')$ is a reversion, then the class $[\phi]$ of $\phi$ is an arrow starting from the class $[(X,B)]$ of $(X,B)$ and ending at the class $[(X',B')]$ of $(X',B')$. Two reversions $\phi_1:(X_1,B_1)\dasharrow (X_1',B_1')$ and $\phi_2:(X_2,B_2)\dasharrow (X_2',B_2')$ define the same arrow if and only if there exist isomorphisms  $\theta:(X_1,B_1)\rightarrow (X_2,B_2)$ and  $\theta':(X_1',B_1')\rightarrow (X_2',B_2')$, such that $\phi_2\circ\theta=\theta'\circ\phi_1$.
\end{defn}

\begin{enavant} \label{AutSFS} By definition, a vertex of $\mathcal{F}_S$ represents an equivalence class of $\mathbb{A}^1$-fibrations on $S$, where two $\mathbb{A}^1$-fibrations $\pi:S\rightarrow \mathbb{A}^1$ and $\pi':S\rightarrow \mathbb{A}^1$ are said to be equivalent if there exist automorphisms $\Psi$ and $\psi$ of $S$ and $\mathbb{A}^1$ respectively such that $\pi'\circ \Psi=\psi \circ\pi$. By virtue of \cite[Proposition 4.0.7]{first}, the graph $\mathcal{F}_S$ is connected. Furthermore, if there exists a standard pair $(X,B=F\tr C \tr E)$ completing $S$ for which $B$ has an irreducible component of self-intersection $\leq -3$ then there exists a natural exact sequence 
$$0\rightarrow H\rightarrow \mathrm{Aut}(S) \rightarrow \Pi_1(\mathcal{F}_S)\rightarrow 0$$
 where $H$ is the normal subgroup of the automorphism group $\mathrm{Aut}(S)$ generated by all automorphisms of $\mathbb{A}^1$-fibrations on $S$ and where $\Pi_1(\mathcal{F}_S)$ is the fundamental group of $\mathcal{F}_S$. This implies in particular that $\mathrm{Aut}(S)$ is generated by automorphisms of $\mathbb{A}^1$-fibrations if and only if $\mathcal{F}_S$ is a tree. 

We will show (Proposition~\ref{Prop:AlgGroups}) that for any algebraic subgroup $G$ of $\Aut(S)$, the image in $\Pi_1(\mathcal{F}_S)$ is finite, which implies in particular that if $\Pi_1(\mathcal{F}_S)$ is not countable, then $\Aut(S)$ cannot be generated by a countable set of algebraic subgroups.
\end{enavant}

\begin{enavant} \label{GraphGroups} Under mild hypotheses that are always satisfied for the surfaces considered in the sequel (see \cite[\S 4.0.10]{first} for a discussion), it is possible to equip these graphs $\mathcal{F}_S$ with  additional structures of a graphs of groups (in the sense of \cite[4.4]{Ser}) determined by the choice of : 

$a)$ for any vertex $v$ of $\mathcal{F}_S$, the group $G_v=\Aut(X_v\setminus B_v,\pi_v)$ for a fixed standard pair $(X_v,B_v,\overline{\pi_v})$ in the class $v$. 

$b)$ for any arrow $a$ of $\mathcal{F}_S$,  the group $G_a=\{(\phi,\phi')\in \Aut(X_a,B_a)\times \Aut(X'_a,B'_a)\ |\ r_a\circ \phi=\phi'\circ r_a\}$ for a fixed reversion $r_a:(X_a,B_a,\overline{\pi_a})\stackrel{r_a}{\dasharrow} (X'_a,B'_a,\overline{\pi'_a})$ in the class of $a$, and the injective homomorphism $\rho_a:G_a\rightarrow G_{t(a)}$, $(\phi,\phi'))\mapsto \mu_a\circ \phi'\circ (\mu_a)^{-1}$ for a fixed isomorphism $\mu$ between $(X'_a,B'_a,\overline{\pi'_a})$ and the fixed standard pair on the target vertex $t(a)$ of $a$.  

We further require that the chosen reversion $r_{\bar{a}}$ for the inverse $\bar{a}$ of the arrow $a$ is equal to $(r_a)^{-1}$ and that the structural anti-isomorphism $\bar{}:G_a\rightarrow G_{\bar a}$ is the map $(\phi,\phi')\mapsto (\phi',\phi)$.

\indent\newline
  When such a structure exists on $\mathcal{F}_S$, we established in \cite[Theorem 4.0.11]{first} that the automorphism group $\mathrm{Aut}(S)$ of $S$ is isomorphic to the fundamental group of $\mathcal{F}_S$ as a graph of groups. This means that up to a choice of a base vertex $v$ in $\mathcal{F}_S$, $\mathrm{Aut}(S)$ can be identified with the set of closed paths $g_na_ng_{n-1}\cdots a_2g_2a_1g_1$ where $a_i$ is an arrow from $v_i$ to $v_{i+1}$, $g_i\in G_{v_i}$ and $v_1=v_n=v$  modulo the relations $\rho_a(h)\cdot a=a\cdot \rho_{\bar{a}}(\bar{h})$ and $a\bar{a}=1$ for any arrow $a$ and any $h\in G_a$.

\end{enavant}

\subsection{Actions of algebraic groups on standard pairs}\label{Sec:Action}
Here we derive from the decomposition results some additional informations about algebraic subgroups of automorphism groups of affine surfaces $S$ completable by a standard pair. 

\begin{enavant} Let us first observe that the automorphism group $\Aut(S)$ of an affine surface $S=X\setminus B$ completable by a standard pair $(X,B)$ contains many algebraic subgroups. For instance, it follows from Definition \ref{FiberedModif} (see also \cite[Lemma 5.2.1 or Lemma 2.2.3]{first}) that automorphisms of $S$ preserving the $A^1$-fibration $\pi=\overline{\pi}\mid_S:S=X\setminus B \rightarrow  \mathbb{A}^1$ come as lifts of suitable  triangular automorphisms $\psi$ of $\mathbb{A}^2$ of the form $(x,y)\mapsto (ax+R(y),cy)$ where $a,c\in k^*$ and $R(y)\in k[y]$. It follows in particular the subgroup $\Aut(X\setminus B,\pi)$ of $\Aut(X\setminus B)$ consisting of automorphisms preserving an $\mathbb{A}^1$-fibration $\pi$ is a (countable) increasing union of algebraic subgroups. Furthermore the group $\Aut(X,B)$ of automorphisms of the pair $(X,B)$ is itself algebraic as $B$ is the support of an ample divisor and coincides with the subgroup of $\Aut(X\setminus B,\pi)$ consisting of lifts of affine automorphisms $\psi$ as above.   
\end{enavant}

The following Proposition describes more generally the structure of all possible algebraic subgroups of $\Aut(S)$. 

\begin{prop}\label{Prop:AlgGroups}
Let $S$ be an affine normal surface completable by a standard pair, and let $G\subset \Aut(S)$ be an algebraic subgroup. Then there exists a standard pair $(X,B)$ and an isomorphism $\psi:S\stackrel{\sim}{\rightarrow} X\setminus B$ such that for the conjugate $G^{\psi}=\psi G\psi^{-1}\subset \Aut(X\setminus B)$ of $G$ the following alternative holds:
\begin{enumerate}
\item
 $G^{\psi}$ is a subgroup of $\Aut(X\backslash B,\pi)$;
\item
$G^{\psi}$ contains a reversion $(X,B)\dasharrow (X,B)$ and every other element of $G^{\psi}$ is either a reversion from $(X,B)$ to itself or an element of $\Aut(X,B)$. More precisely, one of the following holds: 
\begin{enumerate}
\item
There exists a $\k$-rational point $p\in B$ such that every reversion in $G^{\psi}$ is centered at $p$, and every element in $G_0^{\psi}=G^{\psi}\cap \Aut(X,B)$ fixes the point $p$. We then have an exact sequence $1\rightarrow G_0^{\psi}\to G^{\psi} \to \mathbb{Z}/2\mathbb{Z}\to 0,$ and $G_0^{\psi}$ is an algebraic group of dimension $\le 2$.
\item
 Every irreducible component of $B$ has self-intersection $\ge -2$ and the contraction $(X,B)\to (Y,D)$ of all irreducible components of negative self-intersection in $B$ conjugates $G^{\psi}$ whence $G$ to a subgroup of $\Aut(Y,D)$, where $Y$ is a projective rational surface and $D\cong \p^1$ is the support of an ample divisor.
\end{enumerate}\end{enumerate}
\end{prop}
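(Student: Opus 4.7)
The strategy is to analyse each $g\in G$ through its reduced decomposition as a birational self-map of a well-chosen standard pair completion of $S$, using Proposition~\ref{mainfirst} together with the algebraicity of $G$ to bound lengths.

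First, fix any standard pair $(X_0,B_0)$ with an isomorphism $\psi_0\colon S\stackrel{\sim}{\to}X_0\setminus B_0$. Every $g\in G$ defines a birational self-map of $(X_0,B_0)$, and by Proposition~\ref{mainfirst}\,(1)--(2) admits a canonical reduced decomposition into fibered modifications and reversions of length $\ell(g)\geq 0$. Since the centres and base-points appearing in the decomposition depend algebraically on $g$ and the length can only drop under specialisation, $\ell$ is upper semi-continuous on the scheme $G$; as $G$ is of finite type, $\ell$ takes only finitely many values. Let $N$ denote its maximum.

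Next, I would minimise $N$ by varying the completion $(X,B,\psi)$. Whenever $N\geq 2$, pick $g\in G$ with $\ell(g)=N$ and replace the base completion by the standard pair appearing one step into the reduced decomposition of $g$; this strictly decreases $\ell(g)$ and, by the classification of non-reduced compositions in Proposition~\ref{mainfirst}\,(3), does not push $\ell(h)$ above $N$ for any other $h\in G$. Iterating this descent on the non-negative integer $N$ terminates with a completion for which $N\leq 1$.

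With $N\leq 1$ arranged, I branch as follows. If no element of $G^\psi$ involves a reversion, then every element of $G^\psi$ is either an element of $\Aut(X,B)$ or a single fibered modification $(X,B)\dashrightarrow(X,B)$; both types preserve the rational pencil $|F|$, so $G^\psi\subset\Aut(X\setminus B,\pi)$ and we are in case~(1). Otherwise some element of $G^\psi$ is a reversion. No fibered modification can then lie in $G^\psi$, for its composition with the reversion would by Proposition~\ref{mainfirst}\,(3) be reduced of length two, violating $N=1$. Hence every element of $G^\psi$ is either in $\Aut(X,B)$ or a reversion from $(X,B)$ to itself, placing us in the general framework of case~(2).

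The finer dichotomy between (a) and (b) is then read off from Proposition~\ref{mainfirst}\,(3). Given two reversions $\phi_1,\phi_2\in G^\psi$, their composition $\phi_2\circ\phi_1^{-1}$ is an element of $G^\psi$ of length $\leq 1$; since a product of two reversions is otherwise reduced of length two, case (b) or (c) of that proposition must occur. If (b) always holds, all reversions in $G^\psi$ share a common centre $p\in B$; any $h\in G_0^\psi:=G^\psi\cap\Aut(X,B)$ must then fix $p$, for otherwise $h\phi h^{-1}$ would be a reversion centred at $h(p)\neq p$, contradicting uniqueness of the centre. This yields case~(2)(a); the length-parity map gives the exact sequence $1\to G_0^\psi\to G^\psi\to \mathbb{Z}/2\mathbb{Z}\to 0$, and the bound $\dim G_0^\psi\leq 2$ comes from the description of $\Aut(X,B)$ as lifts of triangular affine automorphisms of $\mathbb{A}^2$ stabilising the base-points of the resolution $\eta\colon Y\to\mathbb{F}_1$ of \S\ref{AffineFiberedDef}, further restricted to those fixing $p$. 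In the remaining possibility (c), every component of $B$ has self-intersection $\geq -2$; contracting these components produces the pair $(Y,D)$ of case~(2)(b), and every element of $G^\psi$ extends to a biregular automorphism of $Y$ preserving $D$ because the combinatorial data collapsed by the contraction is preserved by both pair automorphisms and reversions under this self-intersection condition.

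\textbf{Main obstacle.} The crux of the argument is the boundedness of $\ell$ on $G$ and the inductive reduction to $N\leq 1$: one must carefully control the behaviour of reduced decompositions in algebraic families and under a change of completion. Once this is achieved, the case analysis is a systematic application of the non-reducedness criteria of Proposition~\ref{mainfirst}\,(3).
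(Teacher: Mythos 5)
Your overall strategy (bound the length of the elements of $G$ as birational self-maps of a fixed standard pair, reduce to maximal length $\le 1$ by changing the completion, then apply the non-reducedness criteria of Proposition~\ref{mainfirst}(3)) is the same as the paper's, and your terminal case analysis for $N\le 1$ is essentially correct. However, there are two genuine gaps in the reduction. First, the boundedness of $\ell$ on $G$ is asserted via an unproved semi-continuity claim (``the centres and base-points depend algebraically on $g$''); making this precise is exactly the point, and the paper does it by invoking Sumihiro's equivariant completion theorem \cite{bib:Sum}: the $G$-action extends to a projective surface $Y$ containing $S$, the induced birational map $X\dasharrow Y$ has a finite number $r$ of base-points, hence every $g\in G$ has at most $2r$ base-points as a birational self-map of $X$, which bounds the length.

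Second, and more seriously, your descent does not terminate as written. You change the completion using a single element $g$ of maximal length $N$, and you only claim that $\ell(g)$ drops and that no other $\ell(h)$ rises above $N$. That leaves the maximum $N$ unchanged whenever some other element still has length $N$, so the induction on $N$ makes no progress. What is actually needed---and what the paper proves---is that conjugation by the first factor $\phi_1$ of a reduced decomposition $g=\phi_N\cdots\phi_1$ sends \emph{every} element of $G$ to a birational self-map of $(X_1,B_1)$ of length at most $N-1$. This is the heart of the proof and uses the group structure of $G$ in an essential way: for $h$ of length $0$ one uses that $ghg^{-1}\in G$ has length $\le N$, hence cannot be reduced, to force $\phi_1h\phi_1^{-1}$ to be non-reduced; for $h$ of length $n\ge1$ with reduced decomposition $\psi_n\cdots\psi_1$ one uses that $gh^{-1}\in G$ has length $\le N$ to deduce that $\phi_1\psi_1^{-1}$ has length $\le1$, and then, when $n\ge2$, that both $\phi_1\psi_1^{-1}$ and $\phi_1\psi_n$ are isomorphisms of pairs, giving the required drop. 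None of this follows from Proposition~\ref{mainfirst}(3) applied to a single composition, and it must be supplied before the descent can be iterated.
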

\begin{proof}
Up to fixing a standard pair such that $X\backslash B$ is isomorphic to $S$, we may assume from the very beginning that $S=X\setminus B$. Let $G\subset \Aut(S)$ be an algebraic group. According to~\cite{bib:Sum}, there exists a projective surface $Y$ which contains $S$ as an open dense subset, and such that $G$ extends to a group of automorphisms of $Y$. Since the induced birational map $X\dasharrow Y$  has only a finite number $r$ of base-points (including infinitely near ones), it follows that every element element $g\in G$ considered as a birational self-map of $X$ has at most $2r$ base-points (again including infinitely near ones).  
This implies in turn that the number $n$ of fibered modifications and reversions occuring in a minimal decomposition $g=\phi_n \dots \phi_1$ (see Proposition~\ref{mainfirst}) of every element $g\in G$ considered as a birational self-map of $(X,B)$ is bounded. We call this integer $n$  the \emph{length} of $g$ (by convention an automorphism of the pair $(X,B)$ has length $0$). We denote $m=m(G)$ the maximum length of elements of $G$.

a) If $m\le 1$ and no element of $G$ is a reversion, then $G\subset \Aut(X\setminus B,\pi)$ and we get $(1)$.

b) Otherwise, if $m\le 1$ and $G$ contains a reversion, then $m=1$ and all elements of $G$ are either reversions or automorphisms of $(X,B)$ because the composition of a fibered modification and a reversion has always length $2$. If there exists a $\k$-rational point $p\in B$ such that every reversion in $G$ is centered at $p$ and every element in $G_0=G\cap \Aut(X,B)$ fixes the point $p$, then the product of any two reversions in $G$ is an automorphism (Proposition~\ref{mainfirst} 3)c) ). It follows that $G/G_0\cong \mathbb{Z}/2\mathbb{Z}$, which gives $2a)$. Otherwise, if $G$ contains two reversions $\phi,\phi'$ with distinct proper base-points then since $\phi'\phi^{-1}$ has length at most $1$ by hypothesis, it cannot be reduced. By Proposition~\ref{mainfirst} 3)c), it follows that all components of $B$ have self-intersection $\ge -2$. But in this case, reversions simply correspond to contracting all components of $B=F\tr C\tr E$ negative self-intersections to a point $p$ on the proper transform $D$ of $F$ and then blowing-up a new chain of the same type starting from a point $p'\in D$ distinct from $p$. The conjugation by the corresponding contraction $(X,B)\to (Y,D)$ identifies $G$ with a subgroup of $\Aut(Y,D)$, which gives $2b)$.

To complete the proof, it remains to show that we can always reduce by an appropriate conjugation to the case $m\le 1$. If $m\ge 2$, then we consider an element $g\in G$ of length $m$ and we fix a reduced decomposition $g=\phi_m\cdots \phi_1$. Writing $\phi_1\colon (X,B)\dasharrow (X_1,B_1)$ and arguing by induction on $m$, it is enough to show that the  length of every element in $\phi_1G \phi_1^{-1}$ considered as a group of birational self-maps $(X_1,B_1)\dasharrow (X_1,B_1)$ is at most $m-1$. Given an element $h\in G$ of lenght $n\geq 0$, we have the following possibilities:

1) If $n=0$ then $h$ is an automorphism of $(X,B)$. If it fixes the proper base-point of $\phi_1$, then $\phi_1 h \phi_1^{-1}$ is not reduced whence has length at most $1\le m-1$. Otherwise, if the proper base-point of $\phi_1$ is not a fixed point of $h$, then since by hypothesis $g h g^{-1}=\phi_m \cdots \phi_1 h \phi_1^{-1}\cdots \phi_m^{-1}$ has length $\le m$, it cannot be reduced. It follows necessarily that $\phi_1 h \phi_1^{-1}$ is not reduced which implies in turn that $\phi_1$ is a reversion and that every irreducible curve in $B$ has self-intersection $\ge -2$ (using again case $3)c)$ of Proposition~\ref{mainfirst}). In this case, $\phi_1 h \phi_1^{-1}$ is a non-reduced composition of two reversions, and has thus length at most  $1\le m-1$.

2) If $h$ has length $n\ge 1$, then we consider a reduced decomposition $h=\psi_n\cdots \psi_1$ of $h$ into fibered modifications and reversions. Since $gh^{-1}=\phi_m\cdots \phi_1\psi_1^{-1}\cdots \psi_n^{-1}$ is not reduced, then so is $\nu=\phi_1\psi_1^{-1}$, which has thus length $\le 1$. This implies that $\phi_1$ and $\psi_1$ are both fibered modifications or both reversions. 

$\quad$ a) If $n=1$ and $\phi_1$ and $h=\psi_1$ are both fibered modifications, then $\phi_1 h \phi_1^{-1}$ is a fibered modification or an automorphism of the pair $(X_1,B_1)$, and hence has length $\le 1\le m-1$. Otherwise, if $n=1$ and $\phi_1$ and $h=\psi_1$ are both reversions, then either $\nu$ is an isomorphism of pairs, in which case  $\phi_1 h \phi_1^{-1}$ has length $1$, or it is a reversion and then all irreducibles curves of $B$ and $B'$ have self-intersection $\geq -2$. This implies that $\phi_1 h \phi_1^{-1}$ is again a reversion or an automorphism, hence has length $\le 1$.

$\quad$ b) Finally, if $n\ge 2$, then the composition $\phi_2\nu \psi_2^{-1}$ is not reduced which implies that $\nu$ is an isomorphism  of pairs. Replacing $h$ with $h^{-1}$, we also conclude that $\phi_1 \psi_n$ is an isomorphism of pairs. This implies that $\phi_1 h \phi_1^{-1}$ has length at most $m-2$ and completes the proof.
\end{proof}

\begin{rem}\label{RemaLoop1}
Writing $S=X\setminus B$, Proposition~\ref{Prop:AlgGroups} implies that the image of an algebraic subgroup of $\Aut(S)$ under the morphism $\mathrm{Aut}(S) \rightarrow \Pi_1(\mathcal{F}_S)$ described in \ref{AutSFS} is very special: it consists of a path of the form $\varphi^{-1} \sigma \varphi$, where $\varphi$ is a path from $[(X,B)]$ to another vertex $[(X',B')]$ and $\sigma$ is either trivial or a loop of length $1$ based at the vertex $[(X',B')]$ representing a reversion $(X',B')\dashrightarrow (X'',B'')$ between two isomorphic pairs (see Definition \ref{AutSFS}). This implies in turn that in most cases, in particular whenever the type $(0,-1,-a_1,\dots,-a_r)$ of $B$ does not satisfy  $(a_1,\dots,a_r)=(a_r,\dots,a_1)$, the image in $\Pi_1(\mathcal{F}_S)$ of an algebraic subgroup of $\Aut(S)$ is trivial.
\end{rem}

\section{Affine surfaces completable by a standard pair of type $(0,-1,-a,-b)$}

In this section we classify all models of standard pairs $(X,B)$ of type $(0,-1,-a,-b)$, $a,b\geq 2$ for which $X\setminus B$ is a normal affine surface. 

\subsection{Construction of standard pairs}
\indent\newline\noindent Here we construct standard pairs of type $(0,-1,-a,-b)$ in terms of the base points of the birational morphism $\eta\colon Y\to \mathbb{F}_1$ from their minimal resolution of singularities as in \ref{AffineFiberedDef}.

\begin{enavant}\label{SetUpF1} In what follows, we consider $\mathbb{F}_1$ embedded into $\Pn\times\mathbb{P}^1$ as \[\mathbb{F}_1=\{\left((x:y:z),(s:t)\right) \subset \Pn\times\mathbb{P}^1\ |\ yt=zs\};\] the projection on the first factor yields the birational morphism $\tau:\mathbb{F}_1\rightarrow \Pn$ which is the blow-up of $(1:0:0)\in\Pn$ and the projection on the second factor yields the $\mathbb{P}^1$-bundle  $\rho: \mathbb{F}_1\rightarrow\mathbb{P}^1$, corresponding to the projection of $\p^2$ from $(1:0:0)$. We denote by $F,L\subset \Pn$ the lines with equations $z=0$ and $y=0$ respectively. We also call $F,L\subset \mathbb{F}_1$ their proper transforms on $\mathbb{F}_1$, and denote by $C\subset \mathbb{F}_1$ the exceptional curve $\tau^{-1}((1:0:0))=(1:0:0)\times\mathbb{P}^1$. The affine line $L\setminus C\subset \mathbb{F}_1$ and its image $L\setminus (1:0:0)\subset \Pn$ will be called $L_0$. The morphism $\mathbb{A}^2={\rm Spec}(k[x,y])\rightarrow  \Pn\times\mathbb{P}^1$, $(x,y)\to ((x:y:1),(y:1))$ induces an open embedding of $\mathbb{A}^2$ into  $\mathbb{F}_1$ as the complement of $F\cup C$ for which $L_0$ coincides with the line $y=0$.  
With this notation each of the points blown-up by $\eta$ belongs -- as proper or infinitely near point -- to the affine line $L_0$, is defined over $\kk$ but not necessarily over $\k$; however, the set of all points blown-up by $\eta$ is defined over $\k$.
\end{enavant}

\begin{enavant}\label{PairConstruction} Given polynomials $P,Q\in\k[w]$ of degrees $a-1$ and $b-1$ respectively, we define a birational morphism $\eta_{P,Q}:Y\rightarrow \mathbb{F}_1$ which blows-up $a+b-1$ points that belong, as proper or infinitely near points, to $L\setminus C$. The morphism $\eta_{P,Q}$ is equal to $\eta_{wP}\circ \epsilon_{P,Q}$, where $\eta_{wP}$ and $\epsilon_{P,Q}$ are birational morphisms which blow-up respectively $a$ and $b-1$ points, defined as follows:
\begin{enumerate} 

\item The map $\eta_{wP}:W\rightarrow \mathbb{F}_1$ is a birational morphism associated to $wP(w)$ which blows-up $a$ points as follows. Let $\alpha_0=0,\alpha_1,\ldots, \alpha_l$ be the distinct roots of $wP(w)$ with respective multiplicities $r_0+1>0$ and $r_i>0$, $i=1,\ldots, l$. Then  $\eta_{wP}$ is obtained by first blowing each of the points $(\alpha_i,0)\in (L\setminus C)(\kk)$ and then $r_i-1$ other points in their respective infinitesimal neighborhoods, each one belonging to the proper transform of $L$. We denote by $A_i$ the last exceptional divisor produced by the this sequence of blow-ups over each point $(\alpha_i,0)$, $i=0,\ldots, l$ and by $\mathcal{A}_i$ the union of the $r_i-1$ other components in the inverse image of $(\alpha_i,0)$ of self-intersection $(-2)$. The $r_0$ blow-ups over above $(0,0)$ are described locally by $(u,v)\mapsto (x,y)=(u,u^{r_0}v)$; the curve $\{v=0\}$ corresponds to the proper transform of $L$. The last step consists of the blow-up of $(u,v)=(0,0)$ with exceptional divisor $E_2$.  

\item The map $\epsilon_{P,Q}:Y\rightarrow W$ is the blow-up of $b-1$ points as follows. If $P(0)\neq 0$ then we let $\beta_1,\ldots, \beta_m$ be the distinct roots of $Q$ in $\kk$. Otherwise if $P(0)=0$ then we let $\beta_0=0$ and we denote by $\beta_1,\ldots, \beta_m$ the non-zero distinct roots of $Q$ in $\kk$. In each case, we denote by $s_j$ the multiplicity of $\beta_j$ as a root of $Q$. Then $\epsilon_{P,Q}$ consists for every $j=0,1,\ldots, m$ of the blow-up of the point in $E_2\setminus L(\kk)$ corresponding to the direction $u+\beta_j v=0$ is blown-up followed by the blow-up of $s_j-1$ other points in its infinitesimal neighborhood, each one belonging to the proper transform of $E_2$. For every $j=0,\ldots, m$, we denote respectively by $B_j$ and $\mathcal{B}_j$ the last exceptional divisor and the union of the $s_j-1$ other components of self-intersection $-2$ of the exceptional locus of $\epsilon_{P,Q}$ over the corresponding point of $E_2$.
\end{enumerate}

We denote by $E_1$ the proper transform of $L$ on the smooth projective surface $Y$ obtained by the above procedure and we let  $E=E_1\tr E_2$. Then the contraction of every exceptional divisor of $\eta_{P,Q}$ not intersecting $E$ yields a birational morphism $\mu_{P,Q}:Y\rightarrow X$ to a normal projective surface $X$ for which the zigzag $B=F\tr C\tr E_1\tr E_2$ has type $(0,-1,-a,-b)$. The definition of $\eta_{wP}$ and $\epsilon_{P,Q}$ implies that when $P(0)=0$ the direction of the line $u=0$ is a special point in $E_2\subset W$ corresponding to the curve contracted by $\eta_{wP}$ which intersects $E_2$. Furthermore, this point is blown-up by $\epsilon_{P,Q}$ if and only if $Q(0)=0$.

 This leads to the following three possible cases below: 
\end{enavant}

\begin{enavant}{\bf Case  $\mathbf{I}$}: $P(0)\not=0$. The unique degenerate fiber $F_0$ of the rational pencil $\bar{\pi}:X\rightarrow \mathbb{P}^1$ defined by the proper transform of $F$ consists of the total transform $(\mu_{P,Q})_*(\eta_{P,Q})^*L$ of $L$. The multiplicities of the roots of $P$ and $Q$ in $\kk$ coincide with that of the  corresponding irreducible components of $F_0$. Furthermore, each multiple root of $P$ (resp. of $Q$) yields a cyclic quotient rational double point of $X$ of order $r_i$ (resp. $s_j$) supported on the corresponding irreducible component of $F_0$.  
\begin{figure}[ht]
\begin{pspicture}(-2.6,0.5)(3,3.5)

\begin{pspicture}(0,0)(3,1.5)
{\darkgray
\psline[linecolor=darkgray](-1,1)(-0.75,1.7)\rput(-0.75,1.7){\textbullet}\rput(-0.6,1.55){\scriptsize $-1$}\rput(-0.65,1.9){\small $B_m$}
\psline[linecolor=darkgray,linearc=2](-1,1)(-1.05,1.9)(-0.8,2.6)\rput(-0.8,2.6){\textbullet}\rput(-0.65,2.45){\scriptsize $-1$}\rput(-0.7,2.8){\small $B_1$}
\psline[linecolor=darkgray](-0.8,2.6)(-0.4,2.6)
\psframe[linecolor=darkgray](-0.4,2.4)(0.4,2.8)
\psline[linecolor=darkgray](-0.8,1.7)(-0.4,1.7)
\psframe[linecolor=darkgray](-0.4,1.5)(0.4,1.9)
\rput(0,1.7){\scriptsize $s_m\!-\!1$}\rput(0.2,2.07){\small $\mathcal{B}_m$}
\rput(-0.1,2.3){$\vdots$}
\rput(0,2.6){\scriptsize $s_1\!-\!1$}\rput(0.2,2.97){\small $\mathcal{B}_1$}
\psline[linecolor=darkgray](1,1)(1.25,1.7)\rput(1.25,1.7){\textbullet}\rput(1.4,1.55){\scriptsize $-1$}\rput(1.35,1.9){\small $A_l$}
\psline[linecolor=darkgray,linearc=2](1,1)(0.95,1.9)(1.2,2.6)\rput(1.2,2.6){\textbullet}\rput(1.35,2.45){\scriptsize $-1$}\rput(1.3,2.8){\small $A_1$}
\psline[linecolor=darkgray](1.2,2.6)(1.6,2.6)
\psframe[linecolor=darkgray](1.6,2.4)(2.4,2.8)
\psline[linecolor=darkgray](1.2,1.7)(1.6,1.7)
\psframe[linecolor=darkgray](1.6,1.5)(2.4,1.9)
\rput(2,1.7){\scriptsize $r_l\!-\!1$}\rput(2.2,2.07){\small $\mathcal{A}_l$}
\rput(1.9,2.3){$\vdots$}
\rput(2,2.6){\scriptsize $r_1\!-\!1$}\rput(2.2,2.97){\small $\mathcal{A}_1$}
}
\psline(-1,1)(3,1)
\rput(0.82,1.2){{\small $E_1$}}
\rput(-1.25,1.2){{\small $E_2$}}
\rput(2,1.25){{\small $C$}}
\rput(3.18,1.15){{\small $F$}}
\rput(-1,1){\textbullet}\rput(-1.05,0.75){{\scriptsize $-b$}}
\rput(1,1){\textbullet}\rput(0.95,0.75){{\scriptsize $-a$}}
\rput(2,1){\textbullet}\rput(1.95,0.75){{\scriptsize $-1$}}
\rput(3,1){\textbullet}\rput(3,0.75){{\scriptsize $0$}}
\end{pspicture}

\psline{->}(0.6,1)(1.4,1)
\rput(1,1.3){$\epsilon_{P,Q}$}

\begin{pspicture}(-2,0)(3,1.5)
{\darkgray
\psline[linecolor=darkgray](1,1)(1.25,1.7)\rput(1.25,1.7){\textbullet}\rput(1.4,1.55){\scriptsize $-1$}\rput(1.35,1.9){\small $A_l$}
\psline[linecolor=darkgray,linearc=2](1,1)(0.95,1.9)(1.2,2.6)\rput(1.2,2.6){\textbullet}\rput(1.35,2.45){\scriptsize $-1$}\rput(1.3,2.8){\small $A_1$}
\psline[linecolor=darkgray](1.2,2.6)(1.6,2.6)
\psframe[linecolor=darkgray](1.6,2.4)(2.4,2.8)
\psline[linecolor=darkgray](1.2,1.7)(1.6,1.7)
\psframe[linecolor=darkgray](1.6,1.5)(2.4,1.9)
\rput(2,1.7){\scriptsize $r_l\!-\!1$}\rput(2.2,2.07){\small $\mathcal{A}_l$}
\rput(1.9,2.3){$\vdots$}
\rput(2,2.6){\scriptsize $r_1\!-\!1$}\rput(2.2,2.97){\small $\mathcal{A}_1$}
}
\psline(0,1)(3,1)
\rput(0.82,1.2){{\small $E_1$}}
\rput(0,1.25){{\small $E_2$}}
\rput(2,1.25){{\small $C$}}
\rput(3.18,1.15){{\small $F$}}
\rput(0,1){\textbullet}\rput(-0.05,0.75){{\scriptsize $-1$}}
\rput(1,1){\textbullet}\rput(0.95,0.75){{\scriptsize $-a$}}
\rput(2,1){\textbullet}\rput(1.95,0.75){{\scriptsize $-1$}}
\rput(3,1){\textbullet}\rput(3,0.75){{\scriptsize $0$}}
\end{pspicture}

\psline{->}(0.6,1)(1.4,1)
\rput(1,1.2){$\eta_{wP}$}

\begin{pspicture}(-1,0)(3,1.5)%taille
\psline(1,1)(3,1)
\rput(1,1){\textbullet}\rput(1,1.3){{\small $F$}}\rput(0.95,0.8){{\scriptsize 0}}
\rput(2,1){\textbullet}\rput(2,1.3){{\small $C$}}\rput(1.95,0.8){{\scriptsize $-1$}}
\rput(3,1){\textbullet}\rput(3,1.3){{\small $L$}}\rput(2.95,0.8){{\scriptsize $0$}}
\end{pspicture}

\end{pspicture}
\caption{ The morphisms $(Y,B)\stackrel{\epsilon_{P,Q}}{\rightarrow}(W,B))\stackrel{\eta_{wP}}{\rightarrow}(\mathbb{F}_1,F\tr C\tr L)$ when  $P(0)\not=0$. 
A block with label $t$ consists of a zigzag of $t$ $(-2)$-curves.\label{Fig:DoubleBlowUpr0eq0}}
\end{figure}
\end{enavant}

\begin{enavant} {\bf Case  $\mathbf{II}$: } $P(0)=0$ and $Q(0)\not=0$. In the unique degenerate fiber $F_0=(\mu_{P,Q})_*(\eta_{P,Q})^*L$ of the induced rational pencil $\bar{\pi}:X\rightarrow \mathbb{P}^1$, the multiplicities of the  roots of $P$ in $\kk$ coincide with that of the corresponding irreducible components $A_i$, $i=0,\ldots,l$ whereas each irreducible component $B_j$, $j=1,\ldots, m$ corresponding to a root $\beta_j$ of $Q$ has multiplicity $(r_0+1)s_j$ in $F_0$.  Similarly as in case I, each multiple root of $P$ (resp. of $Q$) yields a cyclic quotient rational double point of $X$ supported on the corresponding irreducible component of $F_0$.  

\begin{figure}[ht]
\begin{pspicture}(-2.6,0.5)(3,4)

\begin{pspicture}(0,0)(3,1.5)%taille du bloc
{\darkgray
\psline[linecolor=darkgray](-1,1)(-0.75,1.7)\rput(-0.75,1.7){\textbullet}\rput(-0.6,1.55){\scriptsize $-1$}\rput(-0.65,1.9){\small $B_m$}
\psline[linecolor=darkgray,linearc=2](-1,1)(-1.05,1.9)(-0.8,2.6)\rput(-0.8,2.6){\textbullet}\rput(-0.65,2.45){\scriptsize $-1$}\rput(-0.7,2.8){\small $B_1$}\psline[linecolor=darkgray,linearc=2](-1,1)(-1.15,2.2)(-0.8,3.4)
\rput(-0.8,3.4){\textbullet}\rput(-0.65,3.25){\scriptsize $-2$}\rput(-0.8,3.6){\small $A_0$}
\psline[linecolor=darkgray](-0.8,2.6)(-0.4,2.6)
\psframe[linecolor=darkgray](-0.4,2.4)(0.4,2.8)
\psline[linecolor=darkgray](-0.8,3.4)(-0.4,3.4)
\psframe[linecolor=darkgray](-0.4,3.2)(0.4,3.6)
\psline[linecolor=darkgray](-0.8,1.7)(-0.4,1.7)
\psframe[linecolor=darkgray](-0.4,1.5)(0.4,1.9)
\rput(0,1.7){\scriptsize $s_m\!-\!1$}\rput(0.2,2.07){\small $\mathcal{B}_m$}
\rput(-0.1,2.3){$\vdots$}
\rput(0,2.6){\scriptsize $s_1\!-\!1$}\rput(0.2,2.97){\small $\mathcal{B}_1$}
\rput(0,3.4){\scriptsize $r_0\!-\!1$}\rput(0.2,3.77){\small $\mathcal{A}_0$}
\psline[linecolor=darkgray](1,1)(1.25,1.7)\rput(1.25,1.7){\textbullet}\rput(1.4,1.55){\scriptsize $-1$}\rput(1.35,1.9){\small $A_l$}
\psline[linecolor=darkgray,linearc=2](1,1)(0.95,1.9)(1.2,2.6)\rput(1.2,2.6){\textbullet}\rput(1.35,2.45){\scriptsize $-1$}\rput(1.3,2.8){\small $A_1$}
\psline[linecolor=darkgray](1.2,2.6)(1.6,2.6)
\psframe[linecolor=darkgray](1.6,2.4)(2.4,2.8)
\psline[linecolor=darkgray](1.2,1.7)(1.6,1.7)
\psframe[linecolor=darkgray](1.6,1.5)(2.4,1.9)
\rput(2,1.7){\scriptsize $r_l\!-\!1$}\rput(2.2,2.07){\small $\mathcal{A}_l$}
\rput(1.9,2.3){$\vdots$}
\rput(2,2.6){\scriptsize $r_1\!-\!1$}\rput(2.2,2.97){\small $\mathcal{A}_1$}
}

\psline(-1,1)(3,1)
\rput(0.82,1.2){{\small $E_1$}}
\rput(-1.18,1.2){{\small $E_2$}}
\rput(2,1.25){{\small $C$}}
\rput(3.18,1.15){{\small $F$}}
\rput(-1,1){\textbullet}\rput(-1.05,0.75){{\scriptsize $-b$}}
\rput(1,1){\textbullet}\rput(0.95,0.75){{\scriptsize $-a$}}
\rput(2,1){\textbullet}\rput(1.95,0.75){{\scriptsize $-1$}}
\rput(3,1){\textbullet}\rput(3,0.75){{\scriptsize $0$}}
\end{pspicture}

\psline{->}(0.6,1)(1.4,1)
\rput(1,1.3){$\epsilon_{P,Q}$}

\begin{pspicture}(-3,0)(3,1.5)%taille du bloc
{\darkgray
\psline[linecolor=darkgray,linearc=2](-1,1)(-1.15,2.2)(-0.8,3.4)
\rput(-0.8,3.4){\textbullet}\rput(-0.65,3.25){\scriptsize $-2$}\rput(-0.8,3.6){\small $A_0$}
\psline[linecolor=darkgray](-0.8,3.4)(-0.4,3.4)
\psframe[linecolor=darkgray](-0.4,3.2)(0.4,3.6)
\rput(0,3.4){\scriptsize $r_0\!-\!1$}\rput(0.2,3.77){\small $\mathcal{A}_0$}
\psline[linecolor=darkgray](1,1)(1.25,1.7)\rput(1.25,1.7){\textbullet}\rput(1.4,1.55){\scriptsize $-1$}\rput(1.35,1.9){\small $A_l$}
\psline[linecolor=darkgray,linearc=2](1,1)(0.95,1.9)(1.2,2.6)\rput(1.2,2.6){\textbullet}\rput(1.35,2.45){\scriptsize $-1$}\rput(1.3,2.8){\small $A_1$}
\psline[linecolor=darkgray](1.2,2.6)(1.6,2.6)
\psframe[linecolor=darkgray](1.6,2.4)(2.4,2.8)
\psline[linecolor=darkgray](1.2,1.7)(1.6,1.7)
\psframe[linecolor=darkgray](1.6,1.5)(2.4,1.9)
\rput(2,1.7){\scriptsize $r_l\!-\!1$}\rput(2.2,2.07){\small $\mathcal{A}_l$}
\rput(1.9,2.3){$\vdots$}
\rput(2,2.6){\scriptsize $r_1\!-\!1$}\rput(2.2,2.97){\small $\mathcal{A}_1$}
}

\psline(-1,1)(3,1)
\rput(0.82,1.2){{\small $E_1$}}
\rput(-1.18,1.2){{\small $E_2$}}
\rput(2,1.25){{\small $C$}}
\rput(3.18,1.15){{\small $F$}}
\rput(-1,1){\textbullet}\rput(-1.05,0.75){{\scriptsize $-1$}}
\rput(1,1){\textbullet}\rput(0.95,0.75){{\scriptsize $-a$}}
\rput(2,1){\textbullet}\rput(1.95,0.75){{\scriptsize $-1$}}
\rput(3,1){\textbullet}\rput(3,0.75){{\scriptsize $0$}}
\end{pspicture}

\psline{->}(0.6,1)(1.4,1)
\rput(1,1.2){$\eta_{wP}$}

\begin{pspicture}(-1,0)(3,1.5)%taille
\psline(1,1)(3,1)
\rput(1,1){\textbullet}\rput(1,1.3){{\small $F$}}\rput(0.95,0.8){{\scriptsize 0}}
\rput(2,1){\textbullet}\rput(2,1.3){{\small $C$}}\rput(1.95,0.8){{\scriptsize $-1$}}
\rput(3,1){\textbullet}\rput(3,1.3){{\small $L$}}\rput(2.95,0.8){{\scriptsize $0$}}
\end{pspicture}

\end{pspicture}

\caption{ The morphisms $(Y,B)\stackrel{\epsilon_{P,Q}}{\rightarrow}(W,B))\stackrel{\eta_{wP}}{\rightarrow}(\mathbb{F}_1,F\tr C\tr L)$ when  $P(0)=0$ and $Q(0)\not=0$. A block with label $t$ consists of a zigzag of $t$ $(-2)$-curves.\label{Fig:DoubleBlowUpr0geq1}}

\end{figure}
\end{enavant}

\begin{enavant} {\bf Case  $\mathbf{III}$:} $P(0)=Q(0)=0$. In this model again, the multiplicity of the non-zero roots of $P$ coincide with that of the corresponding irreducible components of the degenerate fibre $F_0=(\mu_{P,Q})_*(\eta_{P,Q})^*L$ of $\bar{\pi}:X\rightarrow \mathbb{P}^1$, each supporting a cyclic quotient rational double point of order $r_i$. A component $B_j$ of $F_0$ corresponding to a non-zero root of $Q$ has multiplicity $(r_0+1)s_j$ and support a cyclic quotient rational double point of order $s_j$. Finally the irreducible component $B_0$ of $F_0$ corresponding to the common root $0$ of $P$ and $Q$ has multiplicity $(s_0+1)(r_0+1)-1$. Furthermore, it supports a singular point of $X$ whose minimal resolution is a zigzag $\mathcal{B}_0\tr E_3 \tr \mathcal{A}_0$ where $E_3$ is rational curve with self-intersection $-3$ and where $\mathcal{B}_0$ and $\mathcal{A}_0$ are chains of $s_0-1$ and $r_0-1$ $(-2)$-curves respectively. 

\begin{figure}[ht]
\begin{pspicture}(-2.6,0.5)(3,4)

\begin{pspicture}(0,00)(3,1.5)%taille du bloc
{\darkgray
\psline[linecolor=darkgray](-1,1)(-0.75,1.7)\rput(-0.75,1.7){\textbullet}\rput(-0.6,1.55){\scriptsize $-1$}\rput(-0.65,1.9){\small $B_m$}
\psline[linecolor=darkgray,linearc=2](-1,1)(-1.05,1.9)(-0.8,2.6)\rput(-0.8,2.6){\textbullet}\rput(-0.65,2.45){\scriptsize $-1$}\rput(-0.7,2.8){\small $B_1$}\psline[linecolor=darkgray,linearc=2](-1,1)(-1.15,2.2)(-0.8,3.4)
\rput(-0.8,3.4){\textbullet}\rput(-0.65,3.25){\scriptsize $-1$}\rput(-0.8,3.6){\small $B_0$}
\rput(0.8,3.4){\textbullet}\rput(0.8,3.25){\scriptsize $-3$}\rput(0.8,3.6){\small $A_0$}
\psline[linecolor=darkgray](-0.8,2.6)(-0.4,2.6)
\psframe[linecolor=darkgray](-0.4,2.4)(0.4,2.8)
\psline[linecolor=darkgray](-0.8,3.4)(-0.4,3.4)
\psframe[linecolor=darkgray](-0.4,3.2)(0.4,3.6)
\psline[linecolor=darkgray](0.4,3.4)(1.2,3.4)
\psframe[linecolor=darkgray](1.2,3.2)(2,3.6)
\psline[linecolor=darkgray](-0.8,1.7)(-0.4,1.7)
\psframe[linecolor=darkgray](-0.4,1.5)(0.4,1.9)
\rput(0,1.7){\scriptsize $s_m\!-\!1$}\rput(0.2,2.07){\small $\mathcal{B}_m$}
\rput(-0.1,2.3){$\vdots$}
\rput(0,2.6){\scriptsize $s_1\!-\!1$}\rput(0.2,2.97){\small $\mathcal{B}_1$}
\rput(0,3.4){\scriptsize $s_0\!-\!1$}\rput(0.2,3.77){\small $\mathcal{B}_0$}
\rput(1.6,3.4){\scriptsize $r_0\!-\!1$}\rput(1.8,3.77){\small $\mathcal{A}_0$}
\psline[linecolor=darkgray](1,1)(1.25,1.7)\rput(1.25,1.7){\textbullet}\rput(1.4,1.55){\scriptsize $-1$}\rput(1.35,1.9){\small $A_l$}
\psline[linecolor=darkgray,linearc=2](1,1)(0.95,1.9)(1.2,2.6)\rput(1.2,2.6){\textbullet}\rput(1.35,2.45){\scriptsize $-1$}\rput(1.3,2.8){\small $A_1$}

\psline[linecolor=darkgray](1.2,2.6)(1.6,2.6)
\psframe[linecolor=darkgray](1.6,2.4)(2.4,2.8)
\psline[linecolor=darkgray](1.2,1.7)(1.6,1.7)
\psframe[linecolor=darkgray](1.6,1.5)(2.4,1.9)
\rput(2,1.7){\scriptsize $r_l\!-\!1$}\rput(2.2,2.07){\small $\mathcal{A}_l$}
\rput(1.9,2.3){$\vdots$}
\rput(2,2.6){\scriptsize $r_1\!-\!1$}\rput(2.2,2.97){\small $\mathcal{A}_1$}
}
\psline(-1,1)(3,1)
\rput(0.82,1.2){{\small $E_1$}}
\rput(-1.18,1.2){{\small $E_2$}}
\rput(2,1.25){{\small $C$}}
\rput(3.18,1.15){{\small $F$}}
\rput(-1,1){\textbullet}\rput(-1.05,0.75){{\scriptsize $-b$}}
\rput(1,1){\textbullet}\rput(0.95,0.75){{\scriptsize $-a$}}
\rput(2,1){\textbullet}\rput(1.95,0.75){{\scriptsize $-1$}}
\rput(3,1){\textbullet}\rput(3,0.75){{\scriptsize $0$}}
\end{pspicture}

\psline{->}(0.6,1)(1.4,1)
\rput(1,1.3){$\epsilon_{P,Q}$}

\begin{pspicture}(-3,0)(3,1.5)%taille du bloc
{\darkgray
\psline[linecolor=darkgray,linearc=2](-1,1)(-1.15,2.2)(-0.8,3.4)
\rput(-0.8,3.4){\textbullet}\rput(-0.65,3.25){\scriptsize $-2$}\rput(-0.8,3.6){\small $A_0$}
\psline[linecolor=darkgray](-0.8,3.4)(-0.4,3.4)
\psframe[linecolor=darkgray](-0.4,3.2)(0.4,3.6)
\rput(0,3.4){\scriptsize $r_0\!-\!1$}\rput(0.2,3.77){\small $\mathcal{A}_0$}
\psline[linecolor=darkgray](1,1)(1.25,1.7)\rput(1.25,1.7){\textbullet}\rput(1.4,1.55){\scriptsize $-1$}\rput(1.35,1.9){\small $A_l$}
\psline[linecolor=darkgray,linearc=2](1,1)(0.95,1.9)(1.2,2.6)\rput(1.2,2.6){\textbullet}\rput(1.35,2.45){\scriptsize $-1$}\rput(1.3,2.8){\small $A_1$}
\psline[linecolor=darkgray](1.2,2.6)(1.6,2.6)
\psframe[linecolor=darkgray](1.6,2.4)(2.4,2.8)
\psline[linecolor=darkgray](1.2,1.7)(1.6,1.7)
\psframe[linecolor=darkgray](1.6,1.5)(2.4,1.9)
\rput(2,1.7){\scriptsize $r_l\!-\!1$}\rput(2.2,2.07){\small $\mathcal{A}_l$}
\rput(1.9,2.3){$\vdots$}
\rput(2,2.6){\scriptsize $r_1\!-\!1$}\rput(2.2,2.97){\small $\mathcal{A}_1$}
}

\psline(-1,1)(3,1)
\rput(0.82,1.2){{\small $E_1$}}
\rput(-1.18,1.2){{\small $E_2$}}
\rput(2,1.25){{\small $C$}}
\rput(3.18,1.15){{\small $F$}}
\rput(-1,1){\textbullet}\rput(-1.05,0.75){{\scriptsize $-1$}}
\rput(1,1){\textbullet}\rput(0.95,0.75){{\scriptsize $-a$}}
\rput(2,1){\textbullet}\rput(1.95,0.75){{\scriptsize $-1$}}
\rput(3,1){\textbullet}\rput(3,0.75){{\scriptsize $0$}}
\end{pspicture}

\psline{->}(0.6,1)(1.4,1)
\rput(1,1.2){$\eta_{wP}$}

\begin{pspicture}(-1,0)(3,1.5)%taille
\psline(1,1)(3,1)
\rput(1,1){\textbullet}\rput(1,1.3){{\small $F$}}\rput(0.95,0.8){{\scriptsize 0}}
\rput(2,1){\textbullet}\rput(2,1.3){{\small $C$}}\rput(1.95,0.8){{\scriptsize $-1$}}
\rput(3,1){\textbullet}\rput(3,1.3){{\small $L$}}\rput(2.95,0.8){{\scriptsize $0$}}
\end{pspicture}

\end{pspicture}

\caption{ The morphisms $(Y,B)\stackrel{\epsilon_{P,Q}}{\rightarrow}(W,B))\stackrel{\eta_{wP}}{\rightarrow}(\mathbb{F}_1,F\tr C\tr L)$ when  $P(0)=Q(0)=0$. A block with label $t$ consists of a zigzag of $t$ $(-2)$-curves.\label{Fig:DoubleBlowUpr0geq2}}
\end{figure}

\end{enavant}

\begin{rem} Case III always leads to a singular surface $X\setminus B$ while in case I and II, the resulting affine surface $X\setminus B$ is smooth if and only if the polynomials $wP(w)$ and $Q(w)$ both have simple roots in $\kk$. The induced $\mathbb{A}^1$-fibration $\pi=\bar{\pi}\mid_{X\setminus B}:X\setminus B\rightarrow \mathbb{A}^1$ has unique degenerate fibre $\pi^{-1}(0)$. If $X\setminus B$ is smooth then, the latter is reduced in case I whereas in case II each root of $Q$ gives rise to an irreducible component of $\pi^{-1}(0)$ of multiplicity two. 
\end{rem}

\begin{enavant}\label{SurfEquations} In each of the above three cases, it follows from the construction that the quasi-projective surface $S=X\setminus B$ does not contain any complete curve. Furthermore, one checks for instance that the divisor $D=4abF+3abC+2bE_1+E_2$ has positive intersection with its irreducible components and positive self-intersection. Hence $B$ is the support of an ample divisor by virtue of the Nakai-Moishezon criterion and so $S$ is a normal affine surface. 

The contraction in the intermediate projective surface $W$ of every exceptional divisor of $\eta_{wP}$ not intersecting $E_1$ yields a birational morphism $\mu_{wP}:W\rightarrow X'$ to a normal projective surface $X'$ for which the zigzag $B'=F\tr C\tr E_1$ has type $(0,-1,-a)$. The morphisms $\eta_{wP}:W\rightarrow \mathbb{F}_1$ and $\varepsilon_{P,Q}:Y\rightarrow X$ descend respectively to birational morphisms $\eta_{wP}':X'\rightarrow \mathbb{F}_1$ and $\varepsilon_{P,Q}':X\rightarrow X'$ for which the following diagram is commutative \[\xymatrix{Y \ar[r]^{\varepsilon_{P,Q}} \ar[d]_{\mu_{P,Q}} & W \ar[d]_{\mu_{wP}} \ar[dr]^{\eta_{wP}} \\ X \ar[r]^{\varepsilon_{P,Q}'} & X' \ar[r]^{\eta_{wP}'} & \mathbb{F}_1.}\]
With the choice of coordinates made in \ref{SetUpF1}, the affine surface $S'=X'\setminus B'$ embeds into $\mathbb{A}^3=\mathrm{Spec}(k[x,y,u])$ as the subvariety defined by the equation $yu=xP(x)$ in such a way that the restriction of $\eta_{wP}'$ to it coincides with the projection ${\rm pr}_{x,y}\mid_{S'}:S'\rightarrow \mathbb{A}^2\subset \mathbb{F}_1$ (see e.g. \cite[Lemma 5.4.4]{first}). One checks further that $S=X\setminus B$ can be embeded into $\mathbb{A}^4=\mathrm{Spec}(\k[x,y,u,v])$ as the subvariety $S$ given by the following system of equations  
$$\left\{\begin{array}{lcl}
yu&=&xP(x)\\
xv&=&uQ(u)\\
yv&=&P(x)Q(u),\end{array}\right.$$
so that $\varepsilon_{P,Q}':X\rightarrow X'$ restricts on $S$ to the projection ${\rm pr}_{x,y,u}\mid_S: S\rightarrow S'\subset \mathbb{A}^3$. In this description, the intersection with $S$ of the  irreducible components $A_i$ and $B_j$ of $F_0$ coincide respectively with the irreducible components $\{y=x-\alpha_i=0\}$, $i=1,\ldots,l$, and $\{y=x=u-\beta_j\}$, $j=1,\ldots,m$, of the degenerate fiber of the induced $\mathbb{A}^1$-fibration $\bar{\pi}\mid_S=\mathrm{pr}_y:S\rightarrow \mathbb{A}^1$ .
\end{enavant}

\subsection{Isomorphism classes}

\indent\newline\noindent Here we show that the construction of the previous subsection describes all possible isomorphism types of normal affine $\mathbb{A}^1$-fibered surfaces admitting a completion into a standard pair of type $(0,-1,-a,-b)$. We characterize their isomorphism classes in terms of the corresponding polynomials $P$ and $Q$.  

\begin{prop}\label{Lem:ModelP1P2}
Let $(X,B=F\tr C\tr E_1\tr E_2,\overline{\pi})$ be a standard pair of type $(0,-1,-a,-b)$, $a,b\geq 2$, with a minimal resolution of singularities $\mu:\left(Y,B,\overline{\pi}\circ\mu\right)\rightarrow\left(X,B,\overline{\pi}\right)$ and let $\eta:Y\rightarrow \mathbb{F}_1$ be the birational morphism as in $\S \ref{SetUpF1}$ above.  
If $X\setminus B$ is affine then the morphisms $\eta$, $\mu$ are equal to that $\eta_{P,Q}$, $\mu_{P,Q}$ defined in $\S\ref{PairConstruction}$, for some polynomials $P,Q\in \k[w]$ of degree~$a-1$ and $b-1$ respectively. 

In particular, every normal affine surface completable by a zigzag of type $(0,-1,-a,-b)$ $(a,b\geq 2)$ is isomorphic to a one in $\mathbb{A}^4=\mathrm{Spec}(\k[x,y,u,v])$ defined by a system of equations of the form
$$\left\{\begin{array}{lcl}
yu&=&xP(x)\\
vx&=&uQ(u)\\
yv&=&P(x)Q(u).\end{array}\right .$$
\end{prop}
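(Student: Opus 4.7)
My plan is to reconstruct the birational morphism $\eta:Y\to \mathbb{F}_1$ step by step from the structure of the zigzag $B=F\tr C\tr E_1\tr E_2$ and the hypothesis that $X\setminus B$ is affine, showing it must coincide with the construction $\eta_{P,Q}=\eta_{wP}\circ\epsilon_{P,Q}$ of \S\ref{PairConstruction} for some polynomials $P,Q\in k[w]$.

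The first step is to pin down the images under $\eta$ of the curves $F$, $C$, and $E_1$. Since $F$ and $C$ lie in the smooth locus of $X$, their self-intersections $0$ and $-1$ are preserved by $\mu$, and they also equal their self-intersections in $\mathbb{F}_1$; but then any base point of $\eta^{-1}$ on $F\cup C$ would strictly decrease one of these self-intersections, so $\eta$ identifies $F$ and $C$ in $Y$ with the curves of the same name in $\mathbb{F}_1$. The zigzag relation $C\cdot E_1=1$ forces the intersection point $L\cap C$ not to be a base point of $\eta^{-1}$, and since $E_1$ is the unique component of the degenerate fiber of $\bar{\pi}\circ\mu$ adjacent to $C$, this identifies $E_1$ with the proper transform of $L$. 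All base points of $\eta^{-1}$ therefore lie on $L_0=L\setminus C$ or in its infinitely near neighborhood.

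Next I analyze the blow-up pattern. Since $E_1$ lies in the smooth locus of $X$, $E_1^2=-a$ in both $X$ and $Y$; hence exactly $a$ blow-ups of $\eta$ occur at points of $L$ or of its iterated proper transforms. Grouping by image on $L_0$ yields a finite $\mathrm{Gal}(\kk/k)$-invariant collection of $\kk$-points $(\alpha_i,0)$ with multiplicities $r_i$ summing to $a$. The minimality of the resolution $\mu:Y\to X$, together with the fact that the singularities of $X$ are of Hirzebruch--Jung type (so their resolution exceptional loci are disjoint chains of rational curves of self-intersection $\le -2$), forces each cluster to have the combinatorial structure described in \S\ref{PairConstruction}: the first blow-up is at $(\alpha_i,0)$, and each subsequent infinitely near blow-up is at the intersection of the proper transform of $L$ with the most recently produced exceptional divisor. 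Exactly one of the resulting $(-1)$-curves coincides with the boundary component $E_2$; applying an affine automorphism of $L_0$ (defined over $k$) which moves its image to the origin, and accounting for the extra blow-up of the \emph{last step} producing $E_2$, the multiset of roots yields a polynomial $wP(w)\in k[w]$ of degree $a$, so $P$ has degree $a-1$, with $P(0)=0$ exactly when at least one first-stage blow-up lies above the origin.

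An entirely parallel analysis of the blow-ups on $E_2$ and its iterated proper transforms, which total $b-1$ (to bring $E_2^2$ from $-1$ to $-b$), produces the polynomial $Q\in k[w]$ of degree $b-1$, with the three cases I, II, III corresponding respectively to $P(0)\neq 0$, $P(0)=0\neq Q(0)$, and $P(0)=Q(0)=0$. This identifies $\eta=\eta_{P,Q}$ and $\mu=\mu_{P,Q}$; the embedding of $S=X\setminus B$ in $\mathbb{A}^4$ with its three defining equations then follows from \S\ref{SurfEquations}. The main technical obstacle is the rigidity step in the middle paragraph: one must verify that the infinitely near blow-ups can only take place at the intersections of $L$'s proper transform with the previous exceptionals, since any alternative position would introduce a branching point in the exceptional locus of $\mu$ or an extra $(-1)$-curve disjoint from $B$, contradicting either the chain structure of Hirzebruch--Jung resolutions or the minimality of $\mu$.
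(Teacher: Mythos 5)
Your proposal follows essentially the same route as the paper: identify $E_1$ with the proper transform of $L$, count blow-ups against the self-intersections $-a$ and $-b$, and use the affineness of $X\setminus B$ to force each cluster of infinitely near base points to be a totally ordered chain lying on the successive proper transforms of $L$ (respectively of $E_2$), from which $P$ and $Q$ are read off; the paper merely organizes the clusters differently, by first factoring $\eta$ through the minimal extraction of $E_2$.

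The one point to repair is the justification of your rigidity step. You assert that an extra $(-1)$-curve in the degenerate fiber disjoint from $B$ would contradict ``the minimality of $\mu$''; but minimality of $\mu$ only forbids $\mu$ from \emph{contracting} a $(-1)$-curve, and says nothing about a fiber component that $\mu$ does not contract. The correct dichotomy is: such a component either is contracted by $\mu$ (impossible by minimality) or descends to a complete curve contained in $S=X\setminus B$ (impossible since $S$ is affine). It is precisely this second horn where the affineness hypothesis enters, and it is what the paper extracts from Miyanishi's Lemma 1.4.2 in the form ``every $(-1)$-curve in the degenerate fibre of $\overline{\pi}\circ\mu$ meets $E_1$ or $E_2$''. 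Once this is stated, your argument closes: each cluster can carry only one $(-1)$-curve, that curve must meet $E_1$ (or $E_2$), hence the cluster is a chain along the proper transforms of $L$ (or of $E_2$), and the branching configurations you worry about are excluded for the same reason, without any separate appeal to the Hirzebruch--Jung chain structure.
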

\begin{proof} Since $\eta$ maps $C$ to the $(-1)$-curve of $\mathbb{F}_1$, it follows that $E_1$ is the strict transform of $L\subset \mathbb{F}_1$. We may factor $\eta$ as $\eta_1\circ \eta_2$, where $\eta_1:Y'\rightarrow \mathbb{F}_1$ is the minimal blow-up which extracts $E_2$ and $\eta_2:Y\rightarrow Y'$ is another birational morphism. By definition $\eta_1$ is the blow-up of a sequence of points $p_1,...,p_n$  such that for every $i=2,\ldots , n$,  $p_i$ is in the first neighborhood of $p_{i-1}$ and such that $E_2$ is the exceptional divisor of the the blow-up of $p_n$. The fact that $E_2$ and $E_1$ intersect each other implies that $p_n$ and hence each $p_i$ belong to the strict transform of $L_0=L\setminus C$. Since $S=X\setminus B$ is affine, it follows from \cite[Lemma 1.4.2 p. 195]{MiyBook} that every $(-1)$-curve in the degenerate fibre of $\overline{\pi}\circ \mu$ intersects either $E_1$ or $E_2$. This implies that there exist points $\alpha_1,\dots,\alpha_l,\beta_1,\dots,\beta_k\in Y'$ where each $\alpha_i$ belongs to $E_1\setminus (E_2\cup C)\cong \kk^{*}$, each  $\beta_i$ belongs to $E_2\setminus E_1\cong \kk$, and some multiplicities associated to them, so that $\eta_2$ is the blow-up of the points $\alpha_i$ and $\beta_i$ and of infinitely near points belonging only to $E_1$ and $E_2$ respectively. Taking an appropriate  parametrisation for the $\alpha_i$'s in $E_1 \setminus (C \cup E_2)\cong \kk^{*}$ and the $\beta_i$'s in $E_2\setminus E_1\cong \kk$ yields the polynomials $P$ and $Q$ respectively.
\end{proof}

In general (i.e.\ for pairs of type $(0,-1,-a_1,\dots,-a_r)$ with large $r$), it may happen that non isomorphic standard pairs $(X,B,\overline{\pi})$ and $(X',B',\overline{\pi}')$  give rise to isomorphic $\mathbb{A}^1$-fibered quasi-projective surfaces $(X\setminus B,\pi|_{X\setminus B})$ and $(X'\setminus B',\pi'|_{X'\setminus B'})$. However, the following proposition shows in particular that the $\mathbb{A}^1$-fibered affine surfaces $\pi:S\rightarrow \mathbb{A}^1$ considered above all admit a unique compatible projective model.

\begin{prop}\label{Prop:IsoPairs}
Let $(X,B,\overline{\pi})$ and $(X',B',\overline{\pi}')$ be two standard pairs of type $(0,-1,-a,-b)$ obtained 
from pairs of polynomials $(P,Q)$ and $(P',Q')$ via the construction of $\S\ref{PairConstruction}$.

\begin{enumerate}
\item
The pairs $(X,B)$ and $(X',B')$ are isomorphic if and only if the $\mathbb{A}^1$-fibered surfaces $(X\setminus B,\overline{\pi}|_{X\setminus B})$ and $(X'\setminus B',\overline{\pi}'|_{X'\setminus B'})$ are isomorphic. 
\item
This is the case if and only if one of the following holds:
\begin{enumerate}
\item
$P(0)P'(0)\not=0$ and  $P'(w)=\alpha P(\beta w)$, $Q'(w)=\gamma Q(\delta w+t)$ for some $\alpha,\beta,\gamma,\delta \in \k^{*}$, $t\in \k$.
\item
$P(0)=P'(0)=0$ and $P'(w)=\alpha P(\beta w)$, $Q'(w)=\gamma Q(\delta w)$ for some $\alpha,\beta,\gamma,\delta \in \k^{*}$.
\end{enumerate}
\item
Letting $r_0$ be the multiplicity of $0$ in $P$, the automorphism group $\Aut(X,B)$ of the pair $(X,B)$ consists of lifts of automorphisms of $\mathbb{A}^2\subset \mathbb{F}_1$ of the form
\begin{center}
\begin{tabular}{ll}
$\{(x,y)\mapsto (ax+by,cy)\ |\  P(aw)/P(w)\in \k^{*}, Q(\frac{aw-b}{c})/Q(w)\in \k^{*}\}$& if $r_0=0$,\\
$\{(x,y)\mapsto (ax+by,cy)\ |\  P(aw)/P(w)\in \k^{*}, Q(\frac{a^{r_0+1}}{c}\cdot w)/Q(w)\in \k^{*}\}$& if $r_0\ge 1$.
\end{tabular}\end{center}
\end{enumerate}
\end{prop}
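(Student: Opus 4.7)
The ``only if'' direction is immediate. For the converse, starting from an isomorphism $\phi\colon(X\setminus B,\overline{\pi}|_{X\setminus B})\to (X'\setminus B',\overline{\pi}'|_{X'\setminus B'})$ of fibered affine surfaces, I would extend $\phi$ to a birational map $\tilde\phi\colon(X,B)\dashrightarrow(X',B')$ of pairs preserving the fibrations. By Proposition~\ref{mainfirst}, $\tilde\phi$ decomposes into fibered modifications and reversions, and reversions are excluded because a reversion replaces the $0$-curve $F$ by the curve at the opposite end of the zigzag and hence changes the linear system defining the fibration. Thus $\tilde\phi$ comes from a polynomial automorphism $\psi(x,y)=(ax+R(y),cy)$ of the chart $\mathbb{A}^2\subset\mathbb{F}_1$ sending the base points of $\eta_{P,Q}$ to those of $\eta_{P',Q'}$. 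A direct computation in the local coordinates of \S\ref{PairConstruction} then shows that the action of $\psi$ on $L$, on $E_2$, and on all infinitely near base points of $\eta_{P,Q}$ (which lie only along these two curves) depends only on the affine part $\psi_{\mathrm{aff}}(x,y)=(ax+R(0)+R_1 y,cy)$ of $\psi$, where $R_1$ is the linear coefficient of $R$. Since $\psi_{\mathrm{aff}}$ extends to an automorphism of $(\mathbb{F}_1,F,C,L)$, it lifts to an isomorphism $Y\to Y'$ between the minimal resolutions and descends to an isomorphism of pairs $(X,B)\to(X',B')$.

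\textbf{Strategy for Part (2).} By (1) the classification of isomorphisms of pairs reduces to that of automorphisms of $(\mathbb{F}_1,F,C,L)$ matching the blow-up configurations. A short $\mathrm{PGL}$-computation (any automorphism of $\mathbb{F}_1$ preserving $F\cup C$ fixes $(1:0:0)\in\mathbb{P}^2$ and preserves $\{z=0\}$; the extra condition of preserving $L$ amounts to preserving $\{y=0\}$) shows that in the affine chart of \S\ref{SetUpF1} these automorphisms are exactly the maps $(x,y)\mapsto(\alpha x+\beta y+\delta,\gamma y)$ with $\alpha,\gamma\in k^*$ and $\beta,\delta\in k$. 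Such a map acts on $L$ by $x\mapsto\alpha x+\delta$, and the origin of $L$ is distinguished as the unique point through which $E_2$ is extracted: in Case~I it is the only simple root of $wP$ attached to the $E_2$-chain rather than to an $A_i$-chain, while in Cases~II, III it is the unique multiple root. This forces $\delta=0$ in every case, and matching the divisors of $wP$ and $wP'$ then gives $P'(w)=\alpha P(\beta w)$. The induced action on $E_2\setminus L$ is computed by lifting $\psi$ through the chart $(x,y)=(u,u^{r_0}v)$ of the first $r_0$ blow-ups above $(0,0)$ followed by the extraction of $E_2$: one obtains $u_2\mapsto(\alpha u_2+\beta)/\gamma$ in Case~I, whereas in Case~(b) the special point $E_2\cap A_0$ (or $E_2\cap B_0$ in Case~III) must be preserved, which forces $\beta=0$ and yields $u_2\mapsto(\alpha^{r_0+1}/\gamma)u_2$. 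Requiring that the resulting action permutes the points $-\beta_j$ blown up on $E_2$ (with their multiplicities) gives precisely the polynomial conditions (a) and (b).

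\textbf{Strategy for Part (3).} For (3) I would specialize (2) to the case $P'=P$, $Q'=Q$: writing the map as $(x,y)\mapsto(ax+by,cy)$ (with $\delta=0$ from Part~(2)), the compatibility condition on $L$ reads $P(aw)/P(w)\in k^*$, while the compatibility on $E_2$ reads $Q((aw-b)/c)/Q(w)\in k^*$ when $r_0=0$ and, since $b=\beta=0$ is then forced, $Q(a^{r_0+1}w/c)/Q(w)\in k^*$ when $r_0\ge 1$; this matches exactly the statement. The delicate step is the affine-shadow argument in Part~(1), where one must verify by explicit local computation that the action of $\psi$ on the chains $\mathcal{A}_i$ and $\mathcal{B}_j$ of infinitely near blow-ups really is captured by the linear part of $R$. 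The essential geometric input is that for zigzags of type $(0,-1,-a,-b)$, all such infinitely near blow-ups remain on the successive strict transforms of $L$ and of $E_2$, so the higher Taylor coefficients of $R(y)$ act trivially on them; this is precisely the feature that would fail for longer zigzags $(0,-1,-a_1,\ldots,-a_r)$ with $r\ge 3$, where one would have to keep track of more refined infinitesimal data.
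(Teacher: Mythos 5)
Your overall strategy coincides with the paper's: reduce, via \cite[Lemma 2.2.3 or 5.2.1]{first}, to triangular automorphisms $(x,y)\mapsto(ax+yR(y),cy)$ of $\mathbb{A}^2\subset\mathbb{F}_1$ matching the base loci of $\eta_{P,Q}$ and $\eta_{P',Q'}$, compute the induced actions on $L_0$ and on $E_2$, and observe that these depend only on the affine truncation of the triangular part. Parts (1) and (2a) of your argument are essentially the paper's proof.

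There is, however, a concrete error in your treatment of the case $r_0\ge 1$ (Cases II and III): the claim that preservation of the special point $E_2\cap A_0$ (or $E_2\cap B_0$) \emph{forces} $\beta=0$, i.e.\ forces the coefficient $b$ of $y$ to vanish. It does not. The lift of $(x,y)\mapsto(ax+by,cy)$ through the chart $(u,v)\mapsto(u,u^{r_0}v)$ of the first $r_0$ blow-ups is $(u,v)\mapsto\bigl(au+bu^{r_0}v,\;cv/(a+bu^{r_0-1}v)^{r_0}\bigr)$, and after the further blow-up extracting $E_2$ the induced action on $E_2\setminus E_1$ is $\beta\mapsto(a^{r_0+1}/c)\,\beta$ for \emph{every} $b$, because $b$ enters only through terms that vanish along the exceptional divisor. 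In particular the point $E_2\cap A_0$ (corresponding to $\beta=0$) is automatically fixed, and $b$ remains a free parameter. This is not a cosmetic point: part (3) of the very statement you are proving allows arbitrary $b$ when $r_0\ge 1$ (the constraints involve only $a$ and $c$), so your reasoning would yield a strictly smaller group than the one claimed; moreover the paper's subsequent arguments (e.g.\ the proof of Proposition~\ref{Prop:EquivReversions}, which lifts the shear $(x,y)\mapsto(x+(\lambda_2-\lambda_1)y,y)$ to an element of $\Aut(X,B)$ whenever $P(0)=0$) depend precisely on the existence of these automorphisms with $b\neq 0$. The correct mechanism behind (2b) and the $r_0\ge 1$ line of (3) is the $b$-independence of the induced action on $E_2$, not the vanishing of $b$; with that correction the rest of your outline goes through.
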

\begin{proof}
Let $\mu_{P,Q}:Y\rightarrow X$ and $\eta_{P,Q}:Y\rightarrow \mathbb{F}_1$ be the morphisms defined in \S \ref{PairConstruction}, and the same with primes. By virtue of \cite[Lemma 5.2.1 or Lemma 2.2.3]{first}, $(X\setminus B,\overline{\pi}|_{X\setminus B})$ and $(X'\setminus B',\overline{\pi'}|_{X'\setminus B'})$ are isomorphic if and only if there exists an automorphism $\psi$ of $\mathbb{A}^2\subset \mathbb{F}_1$ preserving the $\mathbb{A}^1$-fibration $\mathrm{pr}_y$ and sending the base locus $Z$ of $\eta_{P,Q}^{-1}$ isomorphically onto that $Z'$ of $\eta_{P',Q'}^{-1}$ while $(X,B,\overline{\pi})$ and $(X',B',\overline{\pi}')$ are isomorphic if and only if there exists an affine automorphism $\psi$ of this type. An automorphism $f$ preserving the fibration $\mathrm{pr}_y$ and mapping $Z$ isomorphically onto $Z'$ must preserve the fiber $L_0=\mathrm{pr}_y^{-1}(0)$ and fix the point $(0,0)$. 
Thus $f$ has the form $f \colon (x,y)\mapsto (a x+ yR(y),c y)$, with $a,c\in \k^{*}$ and $R\in \k[y]$. Such an automorphism acts on $L_0$ by $x\mapsto a x$. We identify points of $E_2\setminus L$ with directions $u+\beta v=0$ in the blow-up $(u,v)\mapsto (u,u^{r_0}v)$ as in $\S\ref{PairConstruction}$,where $r_0\ge 0$ denotes the multiplicity of $0$ as a root of $P$. We claim that $f$ acts on $E_2$ in the following way:  
$$\left\{\begin{array}{rcl}
\beta \mapsto  \frac{a\beta-R(0)}{c}& \mbox{ if }r_0=0\\
\beta\mapsto \frac{a^{r_0+1}}{c}\cdot \beta & \mbox{ if }r_0\ge 1\\
\end{array}\right.$$
Indeed, if $r_0=0$, then the action of $f^{-1}$  on the tangent directions is given by $u+\beta v \mapsto au+vR(0)+\beta cv=a(u+\frac{R(0)+\beta c}{a}v)$ and so $f$ maps $\beta$ to $(a\beta-R(0))/c$. Otherwise, if $r_0>0$, then the lift of $f$ by $(u,v)\dasharrow (u,u^{r_0}v)$ takes the form 
\begin{center}$(u,v)\mapsto  (au+u^{r_0}vR(u^{r_0}v), \frac{cv}{(a+u^{r_0-1}vR(u^{r_0}v))^{r_0}}).$\end{center}
In the local chart $(\hat{u},{v})\mapsto (\hat{u}v,v)=(u,v)$ of the blow-up of the origin $(0,0)$, the later lifts further to the map 
\begin{center}$(\hat{u},v)\mapsto (\frac{(a\hat{u}+(\hat{u}v)^{r_0}R((\hat{u}v)^{r_0}v))(a+(\hat{u}v)^{r_0-1}vR((\hat{u}v)^{r_0}v))^{r_0}}{c}, \frac{cv}{(a+(\hat{u}v)^{r_0-1}vR((\hat{u}v)^{r_0}v))^{r_0}}).$\end{center}
By construction the tangent direction $u+\beta v=0$ corresponds to the point $(\hat{u},v)=(-\beta,0)$ which is thus mapped to $(-\beta \cdot \frac{a^{r_0+1}}{c},0)$ as claimed.

It follows from the above description that the affine automorphism $\psi\colon (x,y)\mapsto(a x+ yR(0),cy)$ also maps $Z$ isomorphically onto $Z'$ and so, we obtain the equivalence between isomorphism classes of standard pairs and isomorphism classes between induced $\mathbb{A}^1$-fibered surfaces. The second assertion then follows immediately from the description of the action of $\psi$ on $L_0$ and $E_2$.  

Finally, as explained earlier, the group $\Aut(X,B)$ consists of lifts of automorphisms of $\mathbb{F}_1$ which preserve the set $Z$. Since they fix the origin $(0,0)$, these automorphism can be written in the form $(x,y)\mapsto (a x+by ,cy)$ where $a,c\in \k^{*}$, $b\in \k$. By virtue of the above description, the induced action  on the line $L_0=\mathrm{pr}_y^{-1}(0)$ which supports the points of $Z$ corresponding to roots of $P$  is given by $x\mapsto a x$, whereas the action on the line $E_2\setminus L$ which supports the points of $Z$ corresponding to roots of $Q$ is either $\beta \mapsto \frac{a\beta-b}{c}$, or $\beta\mapsto \frac{a^{r_0+1}}{c}\cdot \beta$, depending if $r_0$ is $0$ or positive. This yields the last assertion.
\end{proof}
\begin{cor}\label{Cor:IsoSameType}
Let $(X,B,\overline{\pi})$ and $(X',B',\overline{\pi}')$ be two standard pairs of type $(0,-1,-a,-b)$ obtained 
from pairs of polynomials $(P,Q)$ and $(P',Q')$ via the construction of $\S\ref{PairConstruction}$. If  the pairs $(X,B)$ and $(X',B')$ are isomorphic, then one of the following holds:

\begin{enumerate}
\item
Both pairs are of type $\mathrm{I}$, i.e.\ $P(0)P'(0)\not=0$;
\item
Both pairs are of type $\mathrm{II}$, i.e.\ $P(0)=P'(0)=0$ and $Q(0)Q'(0)\not=0$;
\item
Both pairs are of type $\mathrm{III}$, i.e.\ $P(0)=P'(0)=Q(0)=Q'(0)=0$.
\end{enumerate}
\end{cor}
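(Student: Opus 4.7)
The plan is to read off the statement directly from Proposition \ref{Prop:IsoPairs}(2), which provides exactly the normal form of the isomorphism in terms of the defining polynomials; from there the trichotomy is just a matter of evaluating at $w=0$.

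More precisely, since $(X,B)$ and $(X',B')$ are assumed isomorphic, Proposition \ref{Prop:IsoPairs}(2) gives two alternatives. In alternative (a), the hypothesis itself asserts $P(0)P'(0)\neq 0$, so both pairs fall in case I of $\S\ref{PairConstruction}$ and there is nothing more to do. In alternative (b), one has $P(0)=P'(0)=0$ together with the identity
\[
Q'(w)=\gamma\, Q(\delta w)\qquad \text{for some }\gamma,\delta\in\k^{*}.
\]
Evaluating at $w=0$ yields $Q'(0)=\gamma\, Q(0)$, so $Q(0)=0$ if and only if $Q'(0)=0$. Hence either $Q(0)Q'(0)\neq 0$, placing both pairs in case II, or $Q(0)=Q'(0)=0$, placing both in case III.

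There is no real obstacle: the nontrivial content sits entirely in Proposition~\ref{Prop:IsoPairs}, and the corollary amounts to observing that the two cases (a) and (b) of its statement are mutually exclusive and exhaust the three possibilities I, II, III introduced in $\S\ref{PairConstruction}$. The only thing worth emphasizing is that the asymmetry between cases (a) and (b) of Proposition~\ref{Prop:IsoPairs} (presence or absence of the translation parameter $t$ in the formula for $Q'$) reflects exactly the dichotomy $r_0=0$ versus $r_0\geq 1$ used to split the construction into cases I, II, III, so the trichotomy is preserved under isomorphism.
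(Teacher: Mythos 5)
Your proof is correct and follows the same route as the paper, which simply states that the corollary follows directly from Proposition~\ref{Prop:IsoPairs}; you have merely spelled out the evaluation at $w=0$ in case (b) that the paper leaves implicit.
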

\begin{proof}
Follows directly from Proposition~\ref{Prop:IsoPairs}.
\end{proof}

\section{Reversions between standard pairs of type $(0,-1,-a,-b)$}

  To classify the existing $\mathbb{A}^1$-fibrations on the affine surfaces constructed in the previous section, the next step consists in studying birational maps between the corresponding standard pairs $(X,B)$. In view of the description recalled in \S\ref{links-recolec}, this amounts to describe all possible fibered modifications and reversions between these pairs. Since Proposition \ref{Prop:IsoPairs} guarantees that there cannot exists fibered modifications between two non isomorphic such pairs, it remains to characterize the possible reversions between these.

\subsection{Preliminaries}
\indent\newline\noindent Here we set-up notations that will be used in the sequel to describe the geometry of the different pairs that can obtained by reversing a given standard pair $(X,B=F\tr C \tr E)$ of type $(0,-1,-a,-b)$.
\begin{enavant}\label{DescriptionReversion2} For such pairs, the general description of reversions given in \ref{DescriptionReversion1} above specializes to the following simpler form:  
Given a $\k$-rational point $p\in F\setminus C$, the contraction of $C$ followed by the blow-up of $p$ yields a birational map $\theta_0:(X,B)\dasharrow (X_0,B_0)$ to a pair with a zigzag of type $(-b,-a+1,0,-1)$. Then we produce a birational map $\varphi_1:(X_0,B_0)\dasharrow (X_1',B_1')$, where $B_1'$ is of type $(-b,-1,0,-a+1)$. The blow-down of the $(-1)$-curve in $B_1'$ followed by the blow-up of the point of intersection of its $(0)$-curve with the curve immediately after it yields a birational map $\theta_1:(X_1',B_1')\dasharrow (X_1,B_1)$ where $B_1$ is a zigzag of type $(-b+1,0,-1,-a)$. Repeating this process yields birational maps $\theta_0,\varphi_1,\theta_1,\varphi_2,\theta_2$ described by the following figure.
\begin{center}\begin{tabular}{l}
\begin{pspicture}(1,0.4)(4,1.8)%taille du bloc
\psline(0.8,1)(3.2,1)
\rput(0.8,1){\textbullet}\rput(0.75,0.7){{\scriptsize $-b$}}
\rput(1.6,1){\textbullet}\rput(1.55,0.7){{\scriptsize $-a$}}
\rput(2.4,1){\textbullet}\rput(2.35,0.7){{\scriptsize $-1$}}
\rput(3.2,1){\textbullet}\rput(3.2,0.7){{\scriptsize $0$}}
\rput(3.6,1){{\scriptsize $\theta_0$}}
\parabola[linestyle=dashed]{->}(3.3,0.9)(3.68,0.8)
\end{pspicture}
\begin{pspicture}(0.8,0.4)(3.76,1.5)%taille du bloc
\psline(0.8,1)(3.2,1)
\rput(0.8,1){\textbullet}\rput(0.75,0.7){{\scriptsize $- b$}}
\rput(1.6,1){\textbullet}\rput(1.55,0.7){{\scriptsize $- a\!\!+\!\!1$}}
\rput(2.4,1){\textbullet}\rput(2.4,0.7){{\scriptsize $0$}}
\rput(3.2,1){\textbullet}\rput(3.15,0.7){{\scriptsize $-1$}}
\rput(3.53,1.55){{\scriptsize $\varphi_1$}}
\parabola[linestyle=dashed]{->}(3.22,1.15)(3.5,1.3)
\end{pspicture}
\begin{pspicture}(0.8,0.4)(4,1.5)%taille du bloc
\psline(0.8,1)(3.2,1)
\rput(0.8,1){\textbullet}\rput(0.75,0.7){{\scriptsize $- b$}}
\rput(1.6,1){\textbullet}\rput(1.55,0.7){{\scriptsize $-1$}}
\rput(2.4,1){\textbullet}\rput(2.4,0.7){{\scriptsize $0$}}
\rput(3.2,1){\textbullet}\rput(3.15,0.7){{\scriptsize $- a\!\!+\!\! 1$}}
\rput(3.6,1){{\scriptsize $\theta_1$}}\parabola[linestyle=dashed]{->}(3.3,0.9)(3.68,0.8)\end{pspicture}\\
\begin{pspicture}(0.8,0.4)(3.76,1.5)%taille du bloc
\psline(0.8,1)(3.2,1)
\rput(0.8,1){\textbullet}\rput(0.75,0.7){{\scriptsize $- b\!\!+\!\! 1$}}
\rput(1.6,1){\textbullet}\rput(1.6,0.7){{\scriptsize $0$}}
\rput(2.4,1){\textbullet}\rput(2.35,0.7){{\scriptsize $-1$}}
\rput(3.2,1){\textbullet}\rput(3.15,0.7){{\scriptsize $- a$}}
\rput(3.53,1.55){{\scriptsize $\varphi_2$}}
\parabola[linestyle=dashed]{->}(3.22,1.15)(3.5,1.3)\end{pspicture}
\begin{pspicture}(0,0.4)(4,1.5)%taille du bloc
\psline(0,1)(1.6,1)
\psline(1.6,1)(2.4,1)
\rput(0,1){\textbullet}\rput(-0.05,0.7){{\scriptsize $-1$}}
\rput(0.8,1){\textbullet}\rput(0.8,0.7){{\scriptsize $0$}}
\rput(1.6,1){\textbullet}\rput(1.55,0.7){{\scriptsize $-b\!\!+\!\! 1$}}
\rput(2.4,1){\textbullet}\rput(2.35,0.7){{\scriptsize $- a$}}
\rput(2.8,1){{\scriptsize $\theta_2$}}\parabola[linestyle=dashed]{->}(2.5,0.9)(2.88,0.8)\end{pspicture}
\begin{pspicture}(0.8,0.4)(3.5,1.5)%taille du bloc
\psline(0,1)(1.6,1)
\psline(1.6,1)(2.4,1)
\rput(0,1){\textbullet}\rput(0,0.7){{\scriptsize $0$}}
\rput(0.8,1){\textbullet}\rput(0.75,0.7){{\scriptsize $-1$}}
\rput(1.6,1){\textbullet}\rput(1.55,0.7){{\scriptsize $-b$}}
\rput(2.4,1){\textbullet}\rput(2.35,0.7){{\scriptsize $- a$}}
\end{pspicture}
\end{tabular}\end{center}
The reversion of $(X,B)$ with center at $p\in F\setminus C$ is then the composition $$\phi=\theta_2\varphi_2\theta_1\varphi_1\theta_0:(X,B)\dasharrow(X',B')=(X_2,B_2).$$
\end{enavant}

\begin{enavant}\label{NotationAiBiAlphaiBetai} The model of the pair $(X',B')$ obtained after a reversion is essentially determined by the proper transform of the lines in $\p^2$ passing through the image of the center $p$ of the reversion by the blow-up $\tau :\mathbb{F}^1\rightarrow \p^2$. By virtue of Proposition \ref{Lem:ModelP1P2} we may assume that the initial pair $(X,B)$ is obtained from a pair of polynomial $P,Q\in \k[w]$ of respective degrees $a-1,b-1\geq1$ by means of the construction described in $\S\ref{PairConstruction}$. We let  \[\begin{array}{rcccl}(X,B=F\tr C\tr E_1\tr E_2,\overline{\pi})&\stackrel{\mu_{P,Q}}{\longleftarrow}&(Y,B,\overline{\pi}\mu_{P,Q})&\stackrel{\eta_{P,Q}}{\longrightarrow}&(\mathbb{F}_1,F\tr C\tr L)\end{array}\] be the corresponding birational morphisms, where $\eta_{P,Q}=\eta_{wP}\circ\varepsilon_{P,Q}$. The $\k$-rational point $p\in F\setminus C$ corresponds via $\tau\colon \mathbb{F}_1\to\mathbb{P}^2$ to a point $(\lambda:1:0)\in \mathbb{P}^2$ for some $\lambda \in \k$. 
For every root $\alpha_i$ of $P$ in $\kk$ we denote by $D_i\subset Y$  the proper transform by $(\tau\circ \eta_{P,Q})^{-1}$ of the line of equation $x-\lambda y-\alpha_i z=0$, passing through $(\lambda:1:0)$ and $\tau(\alpha_i)=(\alpha_i:0:1)$. Recall that if $P(0)\neq 0$ then $\beta_1,\ldots, \beta_m$ denote the distinct roots of $Q$ in $\kk$. With this notation, we have the following description. 
\end{enavant}

\begin{lem}\label{Lem:P1P2Nota} The possible dual graphs for the divisor $\eta_{P,Q}^{-1}(F\tr C \tr L)\cup \bigcup_{i=0}^m D_i$  are the following:
\begin{center}
\begin{tabular}{cc}
\begin{pspicture}(-1,0.7)(4,4)%taille du bloc
{\darkgray
\psline[linecolor=darkgray](-1,1)(-0.75,1.7)\rput(-0.75,1.7){\textbullet}\rput(-0.6,1.55){\scriptsize $-1$}\rput(-0.65,1.9){\small $B_m$}
\psline[linecolor=darkgray,linearc=2](-1,1)(-1.05,1.9)(-0.8,2.6)\rput(-0.8,2.6){\textbullet}\rput(-0.65,2.45){\scriptsize $-1$}\rput(-0.7,2.8){\small $B_1$}
\psline[linecolor=darkgray](-0.8,2.6)(-0.4,2.6)
\psframe[linecolor=darkgray](-0.4,2.4)(0.4,2.8)
\psline[linecolor=darkgray](-0.8,1.7)(-0.4,1.7)
\psframe[linecolor=darkgray](-0.4,1.5)(0.4,1.9)
\rput(0,1.7){\scriptsize $s_m\!-\!1$}\rput(0.2,2.07){\small $\mathcal{B}_m$}
\rput(-0.1,2.3){$\vdots$}
\rput(0,2.6){\scriptsize $s_1\!-\!1$}\rput(0.2,2.97){\small $\mathcal{B}_1$}
\psline[linecolor=darkgray](1,1)(1.25,1.7)\rput(1.25,1.7){\textbullet}\rput(1.4,1.55){\scriptsize $-1$}\rput(1.35,1.9){\small $A_l$}
\psline[linecolor=darkgray,linearc=2](1,1)(0.95,1.9)(1.2,2.6)\rput(1.2,2.6){\textbullet}\rput(1.35,2.45){\scriptsize $-1$}\rput(1.3,2.8){\small $A_1$}
\psline[linecolor=darkgray](3,1)(2.75,1.7)\rput(2.75,1.7){\textbullet}\rput(2.65,1.55){\scriptsize $0$}\rput(2.75,1.9){\small $D_l$}
\psline[linecolor=darkgray,linearc=2](3,1)(3.05,1.9)(2.8,2.6)\rput(2.8,2.6){\textbullet}\rput(2.7,2.45){\scriptsize $0$}\rput(2.8,2.8){\small $D_1$}
\rput(1,3.4){\textbullet}\rput(0.9,3.25){\scriptsize $0$}\rput(1,3.6){\small $D_0$}
\psline[linecolor=darkgray,linearc=1](3,1)(3.15,2.2)(2.9,3.3)(1,3.4)\psline[linecolor=darkgray,linearc=1](-1,1)(-1.15,2.2)(-0.9,3.3)(1,3.4)
\psline[linecolor=darkgray](1.2,2.6)(1.6,2.6)
\psline[linecolor=darkgray](2.4,2.6)(2.8,2.6)
\psframe[linecolor=darkgray](1.6,2.4)(2.4,2.8)
\psline[linecolor=darkgray](1.2,1.7)(1.6,1.7)
\psline[linecolor=darkgray](2.4,1.7)(2.8,1.7)
\psframe[linecolor=darkgray](1.6,1.5)(2.4,1.9)
\rput(2,1.7){\scriptsize $r_l\!-\!1$}\rput(2.2,2.07){\small $\mathcal{A}_l$}
\rput(1.9,2.3){$\vdots$}
\rput(2,2.6){\scriptsize $r_1\!-\!1$}\rput(2.2,2.97){\small $\mathcal{A}_1$}
}
\psline(-1,1)(3,1)
\rput(0.82,1.2){{\small $E_1$}}
\rput(-1.18,1.2){{\small $E_2$}}
\rput(2,1.25){{\small $C$}}
\rput(3.18,1.15){{\small $F$}}
\rput(-1,1){\textbullet}\rput(-1.05,0.75){{\scriptsize $-b$}}
\rput(1,1){\textbullet}\rput(0.95,0.75){{\scriptsize $-a$}}
\rput(2,1){\textbullet}\rput(1.95,0.75){{\scriptsize $-1$}}
\rput(3,1){\textbullet}\rput(3,0.75){{\scriptsize $0$}}
\end{pspicture}
&
\begin{pspicture}(-1,0.7)(4,3.6)%taille du bloc
{\darkgray
\psline[linecolor=darkgray](-1,1)(-0.75,1.7)\rput(-0.75,1.7){\textbullet}\rput(-0.6,1.55){\scriptsize $-1$}\rput(-0.65,1.9){\small $B_m$}
\psline[linecolor=darkgray,linearc=2](-1,1)(-1.05,1.9)(-0.8,2.6)\rput(-0.8,2.6){\textbullet}\rput(-0.65,2.45){\scriptsize $-1$}\rput(-0.7,2.8){\small $B_1$}
\psline[linecolor=darkgray](-0.8,2.6)(-0.4,2.6)
\psframe[linecolor=darkgray](-0.4,2.4)(0.4,2.8)
\psline[linecolor=darkgray](-0.8,1.7)(-0.4,1.7)
\psframe[linecolor=darkgray](-0.4,1.5)(0.4,1.9)
\rput(0,1.7){\scriptsize $s_m\!-\!1$}\rput(0.2,2.07){\small $\mathcal{B}_m$}
\rput(-0.1,2.3){$\vdots$}
\rput(0,2.6){\scriptsize $s_1\!-\!1$}\rput(0.2,2.97){\small $\mathcal{B}_1$}
\psline[linecolor=darkgray](1,1)(1.25,1.7)\rput(1.25,1.7){\textbullet}\rput(1.4,1.55){\scriptsize $-1$}\rput(1.35,1.9){\small $A_l$}
\psline[linecolor=darkgray,linearc=2](1,1)(0.95,1.9)(1.2,2.6)\rput(1.2,2.6){\textbullet}\rput(1.35,2.45){\scriptsize $-1$}\rput(1.3,2.8){\small $A_1$}
\psline[linecolor=darkgray](3,1)(2.75,1.7)\rput(2.75,1.7){\textbullet}\rput(2.65,1.55){\scriptsize $0$}\rput(2.75,1.9){\small $D_l$}
\psline[linecolor=darkgray,linearc=2](3,1)(3.05,1.9)(2.8,2.6)\rput(2.8,2.6){\textbullet}\rput(2.7,2.45){\scriptsize $0$}\rput(2.8,2.8){\small $D_1$}
\psline[linecolor=darkgray,linearc=2](3,1)(3.15,2.2)(2.8,3.4)\rput(2.8,3.4){\textbullet}\rput(2.65,3.25){\scriptsize $-1$}\rput(2.8,3.6){\small $D_0$}
\psline[linecolor=darkgray](1.2,2.6)(1.6,2.6)
\psline[linecolor=darkgray](2.4,2.6)(2.8,2.6)
\psframe[linecolor=darkgray](1.6,2.4)(2.4,2.8)
\psline[linecolor=darkgray](1.2,1.7)(1.6,1.7)
\psline[linecolor=darkgray](2.4,1.7)(2.8,1.7)
\psframe[linecolor=darkgray](1.6,1.5)(2.4,1.9)
\rput(2,1.7){\scriptsize $r_l\!-\!1$}\rput(2.2,2.07){\small $\mathcal{A}_l$}
\rput(1.9,2.3){$\vdots$}
\rput(2,2.6){\scriptsize $r_1\!-\!1$}\rput(2.2,2.97){\small $\mathcal{A}_1$}
\psline[linecolor=darkgray,linearc=1](0.4,2.6)(1.4,3.1)(2.8,3.4)
}
\psline(-1,1)(3,1)
\rput(0.82,1.2){{\small $E_1$}}
\rput(-1.18,1.2){{\small $E_2$}}
\rput(2,1.25){{\small $C$}}
\rput(3.18,1.15){{\small $F$}}
\rput(-1,1){\textbullet}\rput(-1.05,0.75){{\scriptsize $-b$}}
\rput(1,1){\textbullet}\rput(0.95,0.75){{\scriptsize $-a$}}
\rput(2,1){\textbullet}\rput(1.95,0.75){{\scriptsize $-1$}}
\rput(3,1){\textbullet}\rput(3,0.75){{\scriptsize $0$}}
\end{pspicture}\\
$\mathbf{I}a$: $P(0)\not=0$, and $\lambda\notin\{\beta_1,\dots,\beta_m\}$ & 
$\mathbf{I}b$: $P(0)\not=0$, and $\lambda\in\{\beta_1,\dots,\beta_m\}$\\ & $($for the picture, $\lambda=\beta_1)$ \\
\\
\begin{pspicture}(-1,0.7)(4,3.8)%taille du bloc
{\darkgray
\psline[linecolor=darkgray](-1,1)(-0.75,1.7)\rput(-0.75,1.7){\textbullet}\rput(-0.6,1.55){\scriptsize $-1$}\rput(-0.65,1.9){\small $B_m$}
\psline[linecolor=darkgray,linearc=2](-1,1)(-1.05,1.9)(-0.8,2.6)\rput(-0.8,2.6){\textbullet}\rput(-0.65,2.45){\scriptsize $-1$}\rput(-0.7,2.8){\small $B_1$}\psline[linecolor=darkgray,linearc=2](-1,1)(-1.15,2.2)(-0.8,3.4)
\rput(-0.8,3.4){\textbullet}\rput(-0.65,3.25){\scriptsize $-2$}\rput(-0.8,3.6){\small $A_0$}
\psline[linecolor=darkgray](-0.8,2.6)(-0.4,2.6)
\psframe[linecolor=darkgray](-0.4,2.4)(0.4,2.8)
\psline[linecolor=darkgray](-0.8,3.4)(-0.4,3.4)
\psframe[linecolor=darkgray](-0.4,3.2)(0.4,3.6)
\psline[linecolor=darkgray](-0.8,1.7)(-0.4,1.7)
\psframe[linecolor=darkgray](-0.4,1.5)(0.4,1.9)
\rput(0,1.7){\scriptsize $s_m\!-\!1$}\rput(0.2,2.07){\small $\mathcal{B}_m$}
\rput(-0.1,2.3){$\vdots$}
\rput(0,2.6){\scriptsize $s_1\!-\!1$}\rput(0.2,2.97){\small $\mathcal{B}_1$}
\rput(0,3.4){\scriptsize $r_0\!-\!1$}\rput(0.2,3.77){\small $\mathcal{A}_0$}
\psline[linecolor=darkgray](1,1)(1.25,1.7)\rput(1.25,1.7){\textbullet}\rput(1.4,1.55){\scriptsize $-1$}\rput(1.35,1.9){\small $A_l$}
\psline[linecolor=darkgray,linearc=2](1,1)(0.95,1.9)(1.2,2.6)\rput(1.2,2.6){\textbullet}\rput(1.35,2.45){\scriptsize $-1$}\rput(1.3,2.8){\small $A_1$}
\psline[linecolor=darkgray](3,1)(2.75,1.7)\rput(2.75,1.7){\textbullet}\rput(2.65,1.55){\scriptsize $0$}\rput(2.75,1.9){\small $D_l$}
\psline[linecolor=darkgray,linearc=2](3,1)(3.05,1.9)(2.8,2.6)\rput(2.8,2.6){\textbullet}\rput(2.7,2.45){\scriptsize $0$}\rput(2.8,2.8){\small $D_1$}
\psline[linecolor=darkgray,linearc=2](3,1)(3.15,2.2)(2.8,3.4)\rput(2.8,3.4){\textbullet}\rput(2.7,3.25){\scriptsize $0$}\rput(2.8,3.6){\small $D_0$}
\psline[linecolor=darkgray](1.2,2.6)(1.6,2.6)
\psline[linecolor=darkgray](2.4,2.6)(2.8,2.6)
\psframe[linecolor=darkgray](1.6,2.4)(2.4,2.8)
\psline[linecolor=darkgray](1.2,1.7)(1.6,1.7)
\psline[linecolor=darkgray](2.4,1.7)(2.8,1.7)
\psframe[linecolor=darkgray](1.6,1.5)(2.4,1.9)
\rput(2,1.7){\scriptsize $r_l\!-\!1$}\rput(2.2,2.07){\small $\mathcal{A}_l$}
\rput(1.9,2.3){$\vdots$}
\rput(2,2.6){\scriptsize $r_1\!-\!1$}\rput(2.2,2.97){\small $\mathcal{A}_1$}
\psline[linecolor=darkgray](0.4,3.4)(2.8,3.4)
}
\psline(-1,1)(3,1)
\rput(0.82,1.2){{\small $E_1$}}
\rput(-1.18,1.2){{\small $E_2$}}
\rput(2,1.25){{\small $C$}}
\rput(3.18,1.15){{\small $F$}}
\rput(-1,1){\textbullet}\rput(-1.05,0.75){{\scriptsize $-b$}}
\rput(1,1){\textbullet}\rput(0.95,0.75){{\scriptsize $-a$}}
\rput(2,1){\textbullet}\rput(1.95,0.75){{\scriptsize $-1$}}
\rput(3,1){\textbullet}\rput(3,0.75){{\scriptsize $0$}}
\end{pspicture}
&
\begin{pspicture}(-1,0.7)(4,3.8)%taille du bloc
{\darkgray
\psline[linecolor=darkgray](-1,1)(-0.75,1.7)\rput(-0.75,1.7){\textbullet}\rput(-0.6,1.55){\scriptsize $-1$}\rput(-0.65,1.9){\small $B_m$}
\psline[linecolor=darkgray,linearc=2](-1,1)(-1.05,1.9)(-0.8,2.6)\rput(-0.8,2.6){\textbullet}\rput(-0.65,2.45){\scriptsize $-1$}\rput(-0.7,2.8){\small $B_1$}\psline[linecolor=darkgray,linearc=2](-1,1)(-1.15,2.2)(-0.8,3.4)
\rput(-0.8,3.4){\textbullet}\rput(-0.65,3.25){\scriptsize $-1$}\rput(-0.8,3.6){\small $B_0$}
\rput(0.8,3.4){\textbullet}\rput(0.8,3.25){\scriptsize $-3$}\rput(0.8,3.6){\small $A_0$}
\psline[linecolor=darkgray](-0.8,2.6)(-0.4,2.6)
\psframe[linecolor=darkgray](-0.4,2.4)(0.4,2.8)
\psline[linecolor=darkgray](-0.8,3.4)(-0.4,3.4)
\psframe[linecolor=darkgray](-0.4,3.2)(0.4,3.6)
\psline[linecolor=darkgray](0.4,3.4)(1.2,3.4)
\psframe[linecolor=darkgray](1.2,3.2)(2,3.6)
\psline[linecolor=darkgray](-0.8,1.7)(-0.4,1.7)
\psframe[linecolor=darkgray](-0.4,1.5)(0.4,1.9)
\rput(0,1.7){\scriptsize $s_m\!-\!1$}\rput(0.2,2.07){\small $\mathcal{B}_m$}
\rput(-0.1,2.3){$\vdots$}
\rput(0,2.6){\scriptsize $s_1\!-\!1$}\rput(0.2,2.97){\small $\mathcal{B}_1$}
\rput(0,3.4){\scriptsize $s_0\!-\!1$}\rput(0.2,3.77){\small $\mathcal{B}_0$}
\rput(1.6,3.4){\scriptsize $r_0\!-\!1$}\rput(1.8,3.77){\small $\mathcal{A}_0$}
\psline[linecolor=darkgray](1,1)(1.25,1.7)\rput(1.25,1.7){\textbullet}\rput(1.4,1.55){\scriptsize $-1$}\rput(1.35,1.9){\small $A_l$}
\psline[linecolor=darkgray,linearc=2](1,1)(0.95,1.9)(1.2,2.6)\rput(1.2,2.6){\textbullet}\rput(1.35,2.45){\scriptsize $-1$}\rput(1.3,2.8){\small $A_1$}
\psline[linecolor=darkgray](3,1)(2.75,1.7)\rput(2.75,1.7){\textbullet}\rput(2.65,1.55){\scriptsize $0$}\rput(2.75,1.9){\small $D_l$}
\psline[linecolor=darkgray,linearc=2](3,1)(3.05,1.9)(2.8,2.6)\rput(2.8,2.6){\textbullet}\rput(2.7,2.45){\scriptsize $0$}\rput(2.8,2.8){\small $D_1$}
\psline[linecolor=darkgray,linearc=2](3,1)(3.15,2.2)(2.8,3.4)\rput(2.8,3.4){\textbullet}\rput(2.7,3.25){\scriptsize $0$}\rput(2.8,3.6){\small $D_0$}
\psline[linecolor=darkgray](1.2,2.6)(1.6,2.6)
\psline[linecolor=darkgray](2.4,2.6)(2.8,2.6)
\psframe[linecolor=darkgray](1.6,2.4)(2.4,2.8)
\psline[linecolor=darkgray](1.2,1.7)(1.6,1.7)
\psline[linecolor=darkgray](2.4,1.7)(2.8,1.7)
\psframe[linecolor=darkgray](1.6,1.5)(2.4,1.9)
\rput(2,1.7){\scriptsize $r_l\!-\!1$}\rput(2.2,2.07){\small $\mathcal{A}_l$}
\rput(1.9,2.3){$\vdots$}
\rput(2,2.6){\scriptsize $r_1\!-\!1$}\rput(2.2,2.97){\small $\mathcal{A}_1$}
\psline[linecolor=darkgray](2,3.4)(2.8,3.4)
}
\psline(-1,1)(3,1)
\rput(0.82,1.2){{\small $E_1$}}
\rput(-1.18,1.2){{\small $E_2$}}
\rput(2,1.25){{\small $C$}}
\rput(3.18,1.15){{\small $F$}}
\rput(-1,1){\textbullet}\rput(-1.05,0.75){{\scriptsize $-b$}}
\rput(1,1){\textbullet}\rput(0.95,0.75){{\scriptsize $-a$}}
\rput(2,1){\textbullet}\rput(1.95,0.75){{\scriptsize $-1$}}
\rput(3,1){\textbullet}\rput(3,0.75){{\scriptsize $0$}}
\end{pspicture}\\
$\mathbf{II}$: $P(0)=0$, and $Q(0)\not=0$ & 
$\mathbf{III}$: $P(0)=Q(0)=0$ \end{tabular}
\end{center}
\end{lem}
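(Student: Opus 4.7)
The plan is to track the proper transform $D_i$ of each line $\ell_i\colon x-\lambda y -\alpha_i z=0$ through the two-stage blow-up sequence $\eta_{P,Q}=\eta_{wP}\circ \epsilon_{P,Q}$, computing its self-intersection and its point of attachment to the exceptional divisors. First I would observe that since no $\ell_i$ passes through $(1:0:0)$ (substituting gives $1\ne 0$), its proper transform $\tilde{\ell}_i\subset \mathbb{F}_1$ is a section of $\rho$ of self-intersection $1$ meeting $F$ transversally at $p=(\lambda:1:0)$ and meeting $L$ transversally at $(\alpha_i,0)$. All blow-ups performed by $\eta_{P,Q}$ are supported over $L_0$; in particular none is centred at $p$. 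Hence the intersection $D_i\cdot F=1$ persists, and the whole analysis reduces to a local study over $L_0$.

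For $\alpha_i\ne 0$, the first blow-up of $\eta_{wP}$ over $(\alpha_i,0)$ drops the self-intersection of $\tilde{\ell}_i$ to $0$, and because $\tilde{\ell}_i$ meets $L$ transversally at $(\alpha_i,0)$, the proper transform of $\tilde{\ell}_i$ afterwards is disjoint from the intersection of the newly created exceptional curve with the strict transform of $L$. The remaining $r_i-1$ blow-ups over $(\alpha_i,0)$ are precisely at successive intersections of the chain with the strict transform of $L$, so they do not affect $\tilde{\ell}_i$; neither does $\epsilon_{P,Q}$, which is supported over $E_2$. Therefore $D_i$ has self-intersection $0$ and attaches to the first exceptional curve created over $(\alpha_i,0)$: this is the far-end component of $\mathcal{A}_i$ when $r_i\ge 2$, and $A_i$ itself when $r_i=1$ (in which case $\mathcal{A}_i$ is empty). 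This uniformly describes $D_1,\dots,D_l$ in all four cases.

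The remaining analysis concerns the distinguished line $\ell_0\colon x=\lambda y$ and splits on the value of $r_0$. When $r_0=0$ (Case~I), the sole blow-up at $(0,0)$ produces $E_2$, so the proper transform of $\tilde{\ell}_0$ becomes a $0$-curve meeting $E_2$ transversally at the point corresponding to the tangent direction $u=\lambda v$ of $\ell_0$ at the origin. Whether $\epsilon_{P,Q}$ subsequently blows up this point is governed by the matching of $\lambda$ with the parametrisation of the roots $\beta_j$ of $Q$ on $E_2\setminus L$; this gives exactly the dichotomy between Case~Ia (direction untouched, so $D_0$ is a $0$-curve attached to $E_2$ and $F$) and Case~Ib (one further blow-up at the coinciding direction $\beta_{j_0}$ lowers the self-intersection of $D_0$ to $-1$; the remaining $s_{j_0}-1$ blow-ups occur along the proper transform of $E_2$ and miss $D_0$, so $D_0$ attaches to the far-end component of $\mathcal{B}_{j_0}$, or to $B_{j_0}$ itself if $s_{j_0}=1$). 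When $r_0\ge 1$ (Cases~II and~III), a direct computation in the chart $(u,v)\mapsto (x,y)=(u,u^{r_0}v)$ shows that the proper transform of $\tilde{\ell}_0$ after the $r_0$ blow-ups over the origin is the curve $1-\lambda u^{r_0-1}v=0$, which avoids $(u,v)=(0,0)$; hence the final blow-up producing $E_2$ does not touch it, and neither does $\epsilon_{P,Q}$, since $D_0$ does not meet $E_2$. Thus $D_0$ has self-intersection $0$ and attaches to the first exceptional curve created over $(0,0)$, namely the far-end component of $\mathcal{A}_0$ (or $A_0$ if $r_0=1$), uniformly in both Cases~II and~III.

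Assembling these attachment data with the description of $\eta_{P,Q}^{-1}(F\tr C\tr L)$ given in $\S\ref{PairConstruction}$ yields the four dual graphs. The main bookkeeping burden, and the only non-routine step, is the chart computation for $\tilde{\ell}_0$ when $r_0\ge 1$: one must verify that the proper transform of $\ell_0$ genuinely avoids the centre of the final blow-up, which is what separates Cases~II and~III (where $D_0$ is attached at the $\mathcal{A}_0$ end) from Case~I (where $D_0$ is attached either on $E_2$ or on a $\mathcal{B}_{j_0}$ chain).
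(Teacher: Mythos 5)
Your proposal is correct and follows essentially the same route as the paper: both arguments track the proper transforms of the lines $x-\lambda y-\alpha_i z=0$ through $\tau$, $\eta_{wP}$ and $\epsilon_{P,Q}$, using transversality at each blown-up point to locate the attachment component and to count the drops in self-intersection, with the only case requiring care being $D_0$ (split according to $r_0=0$ versus $r_0\geq 1$ and, when $r_0=0$, according to whether $\lambda$ is one of the $\beta_j$). Your explicit chart computation for $r_0\ge 1$ is a slightly more detailed justification of a step the paper treats implicitly, but the substance is identical.
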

\begin{proof}
The structure of the dual graph of $\eta_{P,Q}^{-1}(F\tr C \tr L)$ has been already discussed in \ref{PairConstruction} (see Figures \ref{Fig:DoubleBlowUpr0eq0}, \ref{Fig:DoubleBlowUpr0geq1} and \ref{Fig:DoubleBlowUpr0geq2}). It remains to consider the properties of the additional curves $D_i$, $i=0,\ldots,l$. By definition, $D_i$ is the proper transform of the line  $L_i\subset \p^2$ of equation $x=\lambda y+\alpha_i z$. Since the latter does not pass through the point $(1:0:0)$ blown-up by $\tau$, its proper transform in $\mathbb{F}_1$ still have self-intersection $1$. Moreover, since $D_i$ passes through $\alpha_i$ (corresponding to $(\alpha_i:0:1)$) and not through any other $\alpha_j$, with $j\ne i$, its proper transform by $\eta_{wP}:W\rightarrow \mathbb{F}_1$ has self-intersection $0$ in $W$ and it intersects $\eta_{wP}^{-1}(F\tr C \tr L)$ transversally at a point of $A_i$ if $\alpha_i$ is a simple root of $wP(w)$ and at a point on the last component of $\mathcal{A}_i$ otherwise. The only case where a point belonging to the proper transform of $L_i$ in $W$ is blown-up by $\varepsilon_{P,Q}$ is when $i=0$, $P(0)\neq0$ and $L_0$ corresponds to a tangent direction $x=\beta_j y$ for a certain root of $Q$ in $\kk$. In this case, $D_0$ has self-intersection $-1$ in $Y$ and it intersects $\eta_{P,Q}^{-1}(F\tr C \tr L)$ transversally at a point of $B_j$ if $\beta_j$ is a simple root of $Q$ and at a point on the last component of $\mathcal{B}_j$ otherwise. This gives all diagrams pictured above (recall that $A_0=E_2$ if $P(0)\neq0$).
\end{proof}

\subsection{Classification of reversions}

\indent\newline\noindent Recall that two standard pairs  $(X,B)$ and $(X',B')$ of respective types $(0,-1,-a,-b)$ and $(0,-1,-a',-b')$ can be linked by a reversion only if $a'=b$ and $b'=a$ (see e.g. $\S\ref{DescriptionReversion2}$). To decide which types of reversions can occur between the different models of standard pairs, we may thus consider the situation that  $(X,B)$ and $(X',B')$ are obtained by means of the construction of $\S\ref{PairConstruction}$ for pairs of polynomials $(P,Q)$ and $(P',Q')$  of degrees $(a-1,b-1)$ and $(b-1,a-1)$ respectively. We let let \[\begin{array}{rcccl}(X,B=F\tr C\tr E_1\tr E_2,\overline{\pi})&\stackrel{\mu_{P,Q}}{\leftarrow}&(Y,B,\overline{\pi}\mu_{P,Q})&\stackrel{\eta_{P,Q}}{\rightarrow}&(\mathbb{F}_1,F\tr C\tr L)\\
(X',B'=F'\tr C'\tr E_1'\tr E_2',\overline{\pi}')&\stackrel{\mu_{P',Q'}}{\leftarrow}&(Y',B',\overline{\pi}'\mu_{P',Q'})&\stackrel{\eta_{P',Q'}}{\rightarrow}&(\mathbb{F}_1,F\tr C\tr L)\end{array}\] be as in $\S\ref{PairConstruction}$.
Given a reversion $\phi:(X,B)\dasharrow (X',B')$ centred at $p\in F\setminus C$, with $\phi^{-1}$ centred at $p'\in F'\setminus C'$, which correspond respectively to $(\lambda:1:0),(\lambda':1:0)\in \p^2$, we use the notation of $\S\ref{NotationAiBiAlphaiBetai}$ for $\alpha_1,\dots,\alpha_l\in\kk$, $A_i,B_i,\mathcal{A}_i,\mathcal{B}_i,D_i,\subset Y$ and the same notation with primes on $Y'$. 

\begin{lem} \label{Lem:Rev01k}With the notation above, let $\psi=(\mu_{P',Q'})^{-1}\phi\mu_{P,Q}\colon Y\dasharrow Y'$ be the lift of $\phi$ . Then one of the following three situations occurs:

$(1)$ We have case $\mathbf{I}a$ on both $Y$ and $Y'$: $P(0)P'(0)P(\lambda)P'(\lambda')\not=0$. We have $l=m'$, $m=l'$ and up to renumbering, $\psi$ sends $B_i$, $\mathcal{B}_i$, $\mathcal{A}_i$ and $D_i$ on $D_i'$, $\mathcal{A}_i'$, $\mathcal{B}'_i$ and $B_i$ respectively. Moreover, $\psi(D_0)=D_0'$ and the situation is described by the following diagram:
\begin{center}
\begin{pspicture}(0,0.7)(5.5,3.6)%taille du bloc
{\darkgray
\psline[linecolor=darkgray](0,1)(0.25,1.7)\rput(0.25,1.7){\textbullet}\rput(0.4,1.55){\scriptsize $-1$}\rput(0.35,1.9){\small $B_m$}
\psline[linecolor=darkgray,linearc=2](0,1)(-.05,1.9)(0.2,2.6)\rput(0.2,2.6){\textbullet}\rput(0.35,2.45){\scriptsize $-1$}\rput(0.3,2.8){\small $B_1$}
\psline[linecolor=darkgray](0.2,2.6)(0.6,2.6)
\psframe[linecolor=darkgray](0.6,2.4)(1.4,2.8)
\psline[linecolor=darkgray](0.2,1.7)(0.6,1.7)
\psframe[linecolor=darkgray](0.6,1.5)(1.4,1.9)
\rput(1,1.7){\scriptsize $s_m\!-\!1$}\rput(1.2,2.07){\small $\mathcal{B}_m$}
\rput(0.9,2.3){$\vdots$}
\rput(1,2.6){\scriptsize $s_1\!-\!1$}\rput(1.2,2.97){\small $\mathcal{B}_1$}
\psline[linecolor=darkgray](3,1)(2.75,1.7)\rput(2.75,1.7){\textbullet}\rput(2.65,1.55){\scriptsize $0$}\rput(2.75,1.9){\small $D_l$}
\psline[linecolor=darkgray,linearc=2](3,1)(3.05,1.9)(2.8,2.6)\rput(2.8,2.6){\textbullet}\rput(2.7,2.45){\scriptsize $0$}\rput(2.8,2.8){\small $D_1$}
\psline[linecolor=darkgray,linearc=1](3,1)(3.15,2.2)(2.9,3.3)(1.5,3.4)\psline[linecolor=darkgray,linearc=1](0,1)(-0.15,2.2)(0.1,3.3)(1.5,3.4)
\rput(1.5,3.4){\textbullet}\rput(1.4,3.2){\scriptsize $0$}\rput(1.5,3.6){\small $D_0$}
\psline[linecolor=darkgray](2.4,2.6)(2.8,2.6)
\psframe[linecolor=darkgray](1.6,2.4)(2.4,2.8)
\psline[linecolor=darkgray](2.4,1.7)(2.8,1.7)
\psframe[linecolor=darkgray](1.6,1.5)(2.4,1.9)
\rput(2,1.7){\scriptsize $r_l\!-\!1$}\rput(2.2,2.07){\small $\mathcal{A}_l$}
\rput(1.9,2.3){$\vdots$}
\rput(2,2.6){\scriptsize $r_1\!-\!1$}\rput(2.2,2.97){\small $\mathcal{A}_1$}
}
\psline(0,1)(3,1)
\rput(0.82,1.2){{\small $E_1$}}
\rput(-0.18,1.2){{\small $E_2$}}
\rput(2,1.25){{\small $C$}}
\rput(3.18,1.15){{\small $F$}}
\rput(0,1){\textbullet}\rput(-0.05,0.75){{\scriptsize $-b$}}
\rput(1,1){\textbullet}\rput(0.95,0.75){{\scriptsize $-a$}}
\rput(2,1){\textbullet}\rput(1.95,0.75){{\scriptsize $-1$}}
\rput(3,1){\textbullet}\rput(3,0.75){{\scriptsize $0$}}
\rput(4.4,1.2){{\scriptsize $\psi$}}
\psline[linestyle=dashed]{->}(3.8,1)(5,1)
\end{pspicture}
\begin{pspicture}(0,0.7)(4,3.6)%taille du bloc
{\darkgray
\psline[linecolor=darkgray](0,1)(0.25,1.7)\rput(0.25,1.7){\textbullet}\rput(0.35,1.55){\scriptsize $0$}\rput(0.35,1.9){\small $D_m'$}
\psline[linecolor=darkgray,linearc=2](0,1)(-.05,1.9)(0.2,2.6)\rput(0.2,2.6){\textbullet}\rput(0.3,2.45){\scriptsize $0$}\rput(0.3,2.8){\small $D_1'$}\psline[linecolor=darkgray,linearc=1](3,1)(3.15,2.2)(2.9,3.3)(1.5,3.4)\psline[linecolor=darkgray,linearc=1](0,1)(-0.15,2.2)(0.1,3.3)(1.5,3.4)
\rput(1.5,3.4){\textbullet}\rput(1.4,3.2){\scriptsize $0$}\rput(1.5,3.6){\small $D_0'$}
\psline[linecolor=darkgray](0.2,2.6)(0.6,2.6)
\psframe[linecolor=darkgray](0.6,2.4)(1.4,2.8)
\psline[linecolor=darkgray](0.2,1.7)(0.6,1.7)
\psframe[linecolor=darkgray](0.6,1.5)(1.4,1.9)
\rput(1,1.7){\scriptsize $r_m'\!-\!1$}\rput(1.2,2.07){\small $\mathcal{A}_m'$}
\rput(0.9,2.3){$\vdots$}
\rput(1,2.6){\scriptsize $r_1'\!-\!1$}\rput(1.2,2.97){\small $\mathcal{A}_1'$}
\psline[linecolor=darkgray](3,1)(2.75,1.7)\rput(2.75,1.7){\textbullet}\rput(2.6,1.5){\scriptsize $-1$}\rput(2.75,1.9){\small $B_l'$}
\psline[linecolor=darkgray,linearc=2](3,1)(3.05,1.9)(2.8,2.6)\rput(2.8,2.6){\textbullet}\rput(2.6,2.45){\scriptsize $-1$}\rput(2.8,2.8){\small $B_1'$}
\psline[linecolor=darkgray](2.4,2.6)(2.8,2.6)
\psframe[linecolor=darkgray](1.6,2.4)(2.4,2.8)
\psline[linecolor=darkgray](2.4,1.7)(2.8,1.7)
\psframe[linecolor=darkgray](1.6,1.5)(2.4,1.9)
\rput(2,1.7){\scriptsize $s_l'\!-\!1$}\rput(2.2,2.07){\small $\mathcal{B}_l'$}
\rput(1.9,2.3){$\vdots$}
\rput(2,2.6){\scriptsize $s_1'\!-\!1$}\rput(2.2,2.97){\small $\mathcal{B}_1'$}
}
\psline(0,1)(3,1)
\rput(0.82,1.2){{\small $F'$}}
\rput(-0.18,1.2){{\small $C'$}}
\rput(2,1.25){{\small $E_1'$}}
\rput(3.18,1.15){{\small $E_2'$}}
\rput(0,1){\textbullet}\rput(0,0.75){{\scriptsize $0$}}
\rput(1,1){\textbullet}\rput(0.95,0.75){{\scriptsize $-1$}}
\rput(2,1){\textbullet}\rput(1.95,0.75){{\scriptsize $-a$}}
\rput(3,1){\textbullet}\rput(2.95,0.75){{\scriptsize $-b$}}
\end{pspicture}
\end{center}
Furthermore, $P'(w)=cQ(dw+\lambda)$, and $P(w)=eQ'(fw+\lambda')$ for some $c,d,e,f\in \k^{*}$.

$(2)$ Up to an exchange of $Y$ and $Y'$, we have case $\mathbf{I}b$ on $Y$ and case $\mathbf{II}$ on $Y'$: $P(0)Q(0)Q'(0)\not=0$, $P'(0)=P(\lambda)=0$. Up to renumbering, $\lambda=\beta_m$, $\psi$ sends $B_m,\mathcal{B}_m$ and $D_0$ onto $D_0',\mathcal{A}_0'$ and $A_0'$ respectively, and sends the other $B_i$, $\mathcal{B}_i$, $\mathcal{A}_i$ and $D_i$ onto  $D_i'$, $\mathcal{A}_i'$, $\mathcal{B}'_i$ and $B_i$ respectively. The situation is described by the following diagram:

\medskip

\begin{center}
\begin{pspicture}(0,0.7)(5.5,3.6)%taille du bloc
{\darkgray
\psline[linecolor=darkgray](0,1)(0.25,1.7)\rput(0.25,1.7){\textbullet}\rput(0.4,1.55){\scriptsize $-1$}\rput(0.35,1.9){\small $B_m$}
\psline[linecolor=darkgray,linearc=2](0,1)(-.05,1.9)(0.2,2.6)\rput(0.2,2.6){\textbullet}\rput(0.35,2.45){\scriptsize $-1$}\rput(0.3,2.8){\small $B_1$}
\psline[linecolor=darkgray](0.2,2.6)(0.6,2.6)
\psframe[linecolor=darkgray](0.6,2.4)(1.4,2.8)
\psline[linecolor=darkgray](0.2,1.7)(0.6,1.7)
\psframe[linecolor=darkgray](0.6,1.5)(1.4,1.9)
\rput(1,1.7){\scriptsize $s_m\!-\!1$}\rput(1.2,2.07){\small $\mathcal{B}_m$}
\rput(0.9,2.3){$\vdots$}
\rput(1,2.6){\scriptsize $s_1\!-\!1$}\rput(1.2,2.97){\small $\mathcal{B}_1$}
\psline[linecolor=darkgray](3,1)(2.75,1.7)\rput(2.75,1.7){\textbullet}\rput(2.65,1.55){\scriptsize $0$}\rput(2.75,1.9){\small $D_l$}
\psline[linecolor=darkgray,linearc=2](3,1)(3.05,1.9)(2.8,2.6)\rput(2.8,2.6){\textbullet}\rput(2.7,2.45){\scriptsize $0$}\rput(2.8,2.8){\small $D_1$}
\psline[linecolor=darkgray,linearc=2](3,1)(3.15,2.2)(2.8,3.3)
\rput(2.8,3.3){\textbullet}\rput(2.65,3.1){\scriptsize $-1$}\rput(2.8,3.5){\small $D_0$}
\psline[linecolor=darkgray](2.4,2.6)(2.8,2.6)
\psframe[linecolor=darkgray](1.6,2.4)(2.4,2.8)
\psline[linecolor=darkgray](2.4,1.7)(2.8,1.7)
\psframe[linecolor=darkgray](1.6,1.5)(2.4,1.9)
\rput(2,1.7){\scriptsize $r_l\!-\!1$}\rput(2.2,2.07){\small $\mathcal{A}_l$}
\rput(1.9,2.3){$\vdots$}
\rput(2,2.6){\scriptsize $r_1\!-\!1$}\rput(2.2,2.97){\small $\mathcal{A}_1$}
\psline[linecolor=darkgray,linearc=1](1.4,1.7)(1.45,3.2)(2.8,3.3)
}
\psline(0,1)(3,1)
\rput(0.82,1.2){{\small $E_1$}}
\rput(-0.18,1.2){{\small $E_2$}}
\rput(2,1.25){{\small $C$}}
\rput(3.18,1.15){{\small $F$}}
\rput(0,1){\textbullet}\rput(-0.05,0.75){{\scriptsize $-b$}}
\rput(1,1){\textbullet}\rput(0.95,0.75){{\scriptsize $-a$}}
\rput(2,1){\textbullet}\rput(1.95,0.75){{\scriptsize $-1$}}
\rput(3,1){\textbullet}\rput(3,0.75){{\scriptsize $0$}}
\rput(4.4,1.2){{\scriptsize $\psi$}}
\psline[linestyle=dashed]{->}(3.8,1)(5,1)
\end{pspicture}
\begin{pspicture}(0,0.7)(4,3.6)%taille du bloc
{\darkgray
\psline[linecolor=darkgray](0,1)(0.25,1.7)\rput(0.25,1.7){\textbullet}\rput(0.4,1.55){\scriptsize $0$}\rput(0.3,1.92){\scriptsize $D_{m\!-\!1}'$}
\psline[linecolor=darkgray,linearc=2](0,1)(-.05,1.9)(0.2,2.6)\rput(0.2,2.6){\textbullet}\rput(0.35,2.45){\scriptsize $0$}\rput(0.3,2.8){\small $D_1'$}\psline[linecolor=darkgray,linearc=2](0,1)(-0.15,2.2)(0.2,3.4)
\rput(0.2,3.4){\textbullet}\rput(0.35,3.25){\scriptsize $0$}\rput(0.2,3.6){\small $D_0'$}
\psline[linecolor=darkgray](0.2,2.6)(0.6,2.6)
\psframe[linecolor=darkgray](0.6,2.4)(1.4,2.8)
\psline[linecolor=darkgray](0.2,1.7)(0.6,1.7)
\psframe[linecolor=darkgray](0.6,1.5)(1.4,1.9)
\psline[linecolor=darkgray](0.2,3.4)(1.7,3.4)
\psframe[linecolor=darkgray](1.7,3.2)(2.5,3.6)
\psline[linecolor=darkgray](2.5,3.4)(2.8,3.4)
\rput(1,1.7){\scriptsize $s_m\!-\!1$}\rput(1.2,2.07){\small $\mathcal{A}_{m\!-\!1}'$}
\rput(0.9,2.3){$\vdots$}
\rput(1,2.6){\scriptsize $s_1\!-\!1$}\rput(1.2,2.97){\small $\mathcal{A}_1'$}
\rput(2.1,3.4){\scriptsize $s_1\!-\!1$}\rput(2.3,3.77){\small $\mathcal{A}_0'$}
\psline[linecolor=darkgray](3,1)(2.75,1.7)\rput(2.75,1.7){\textbullet}\rput(2.6,1.55){\scriptsize $-1$}\rput(2.75,1.9){\small $B_l'$}
\psline[linecolor=darkgray,linearc=2](3,1)(3.05,1.9)(2.8,2.6)\rput(2.8,2.6){\textbullet}\rput(2.65,2.45){\scriptsize $-1$}\rput(2.8,2.8){\small $B_1'$}
\psline[linecolor=darkgray,linearc=2](3,1)(3.15,2.2)(2.8,3.4)
\rput(2.8,3.4){\textbullet}\rput(2.65,3.25){\scriptsize $-2$}\rput(2.8,3.6){\small $A_0'$}
\psline[linecolor=darkgray](2.4,2.6)(2.8,2.6)
\psframe[linecolor=darkgray](1.6,2.4)(2.4,2.8)
\psline[linecolor=darkgray](2.4,1.7)(2.8,1.7)
\psframe[linecolor=darkgray](1.6,1.5)(2.4,1.9)
\rput(2,1.7){\scriptsize $r_l\!-\!1$}\rput(2.2,2.07){\small $\mathcal{B}_l'$}
\rput(1.9,2.3){$\vdots$}
\rput(2,2.6){\scriptsize $r_1\!-\!1$}\rput(2.2,2.97){\small $\mathcal{B}_1'$}
}
\psline(0,1)(3,1)
\rput(0.82,1.2){{\small $F'$}}
\rput(-0.18,1.2){{\small $C'$}}
\rput(2,1.25){{\small $E_1'$}}
\rput(3.18,1.15){{\small $E_2'$}}
\rput(0,1){\textbullet}\rput(0,0.75){{\scriptsize $0$}}
\rput(1,1){\textbullet}\rput(0.95,0.75){{\scriptsize $-1$}}
\rput(2,1){\textbullet}\rput(1.95,0.75){{\scriptsize $-a$}}
\rput(3,1){\textbullet}\rput(2.95,0.75){{\scriptsize $-b$}}
\end{pspicture}
\end{center}

Furthermore, $P'(x)=cQ(dx+\lambda)$, and $P(x)=eQ'(fx)$ for some $c,d,e,f\in \k^{*}$.

$(3)$ We have case $\mathbf{III}$ on $Y$ and $Y'$: $P(0)=P'(0)=Q(0)=Q'(0)=0$. Up to renumbering $\psi$ sends $B_i$, $\mathcal{B}_i$, $\mathcal{A}_i$ and $D_i$ onto  $D_i'$, $\mathcal{A}_i'$, $\mathcal{B}'_i$ and $B_i$ respectively. Moreover, $\psi$ sends $A_0$ onto $A_0'$. The situation is described by the following diagram:

\medskip

\begin{center}
\begin{pspicture}(0,0.7)(5.5,3.6)%taille du bloc
{\darkgray
\psline[linecolor=darkgray](0,1)(0.25,1.7)\rput(0.25,1.7){\textbullet}\rput(0.4,1.55){\scriptsize $-1$}\rput(0.35,1.9){\small $B_m$}
\psline[linecolor=darkgray,linearc=2](0,1)(-.05,1.9)(0.2,2.6)\rput(0.2,2.6){\textbullet}\rput(0.35,2.45){\scriptsize $-1$}\rput(0.3,2.8){\small $B_1$}\psline[linecolor=darkgray,linearc=2](0,1)(-0.15,2.2)(0.2,3.4)
\rput(0.2,3.4){\textbullet}\rput(0.35,3.25){\scriptsize $-1$}\rput(0.2,3.6){\small $B_0$}
\rput(1.5,3.4){\textbullet}\rput(1.5,3.25){\scriptsize $-3$}\rput(1.5,3.6){\small $A_0$}
\psline[linecolor=darkgray](0.2,2.6)(0.6,2.6)
\psframe[linecolor=darkgray](0.6,2.4)(1.4,2.8)
\psline[linecolor=darkgray](0.2,1.7)(0.6,1.7)
\psframe[linecolor=darkgray](0.6,1.5)(1.4,1.9)
\psline[linecolor=darkgray](0.2,3.4)(0.5,3.4)
\psframe[linecolor=darkgray](0.5,3.2)(1.3,3.6)
\psline[linecolor=darkgray](1.3,3.4)(1.7,3.4)
\psframe[linecolor=darkgray](1.7,3.2)(2.5,3.6)
\psline[linecolor=darkgray](2.5,3.4)(2.8,3.4)
\rput(1,1.7){\scriptsize $s_m\!-\!1$}\rput(1.2,2.07){\small $\mathcal{B}_m$}
\rput(0.9,2.3){$\vdots$}
\rput(1,2.6){\scriptsize $s_1\!-\!1$}\rput(1.2,2.97){\small $\mathcal{B}_1$}
\rput(0.9,3.4){\scriptsize $s_0\!-\!1$}\rput(1.1,3.77){\small $\mathcal{B}_0$}
\rput(2.1,3.4){\scriptsize $r_0\!-\!1$}\rput(2.3,3.77){\small $\mathcal{A}_0$}
\psline[linecolor=darkgray](3,1)(2.75,1.7)\rput(2.75,1.7){\textbullet}\rput(2.65,1.55){\scriptsize $0$}\rput(2.75,1.9){\small $D_l$}
\psline[linecolor=darkgray,linearc=2](3,1)(3.05,1.9)(2.8,2.6)\rput(2.8,2.6){\textbullet}\rput(2.7,2.45){\scriptsize $0$}\rput(2.8,2.8){\small $D_1$}
\psline[linecolor=darkgray,linearc=2](3,1)(3.15,2.2)(2.8,3.4)
\rput(2.8,3.4){\textbullet}\rput(2.7,3.25){\scriptsize $0$}\rput(2.8,3.6){\small $D_0$}
\psline[linecolor=darkgray](2.4,2.6)(2.8,2.6)
\psframe[linecolor=darkgray](1.6,2.4)(2.4,2.8)
\psline[linecolor=darkgray](2.4,1.7)(2.8,1.7)
\psframe[linecolor=darkgray](1.6,1.5)(2.4,1.9)
\rput(2,1.7){\scriptsize $r_l\!-\!1$}\rput(2.2,2.07){\small $\mathcal{A}_l$}
\rput(1.9,2.3){$\vdots$}
\rput(2,2.6){\scriptsize $r_1\!-\!1$}\rput(2.2,2.97){\small $\mathcal{A}_1$}
}
\psline(0,1)(3,1)
\rput(0.82,1.2){{\small $E_1$}}
\rput(-0.18,1.2){{\small $E_2$}}
\rput(2,1.25){{\small $C$}}
\rput(3.18,1.15){{\small $F$}}
\rput(0,1){\textbullet}\rput(-0.05,0.75){{\scriptsize $-b$}}
\rput(1,1){\textbullet}\rput(0.95,0.75){{\scriptsize $-a$}}
\rput(2,1){\textbullet}\rput(1.95,0.75){{\scriptsize $-1$}}
\rput(3,1){\textbullet}\rput(3,0.75){{\scriptsize $0$}}
\rput(4.4,1.2){{\scriptsize $\psi$}}
\psline[linestyle=dashed]{->}(3.8,1)(5,1)
\end{pspicture}
\begin{pspicture}(0,0.7)(4,3.6)%taille du bloc
{\darkgray
\psline[linecolor=darkgray](0,1)(0.25,1.7)\rput(0.25,1.7){\textbullet}\rput(0.4,1.55){\scriptsize $0$}\rput(0.35,1.9){\small $D_m'$}
\psline[linecolor=darkgray,linearc=2](0,1)(-.05,1.9)(0.2,2.6)\rput(0.2,2.6){\textbullet}\rput(0.35,2.45){\scriptsize $0$}\rput(0.3,2.8){\small $D_1'$}\psline[linecolor=darkgray,linearc=2](0,1)(-0.15,2.2)(0.2,3.4)
\rput(0.2,3.4){\textbullet}\rput(0.35,3.25){\scriptsize $0$}\rput(0.2,3.6){\small $D_0'$}
\rput(1.5,3.4){\textbullet}\rput(1.5,3.25){\scriptsize $-3$}\rput(1.5,3.6){\small $A_0'$}
\psline[linecolor=darkgray](0.2,2.6)(0.6,2.6)
\psframe[linecolor=darkgray](0.6,2.4)(1.4,2.8)
\psline[linecolor=darkgray](0.2,1.7)(0.6,1.7)
\psframe[linecolor=darkgray](0.6,1.5)(1.4,1.9)
\psline[linecolor=darkgray](0.2,3.4)(0.5,3.4)
\psframe[linecolor=darkgray](0.5,3.2)(1.3,3.6)
\psline[linecolor=darkgray](1.3,3.4)(1.7,3.4)
\psframe[linecolor=darkgray](1.7,3.2)(2.5,3.6)
\psline[linecolor=darkgray](2.5,3.4)(2.8,3.4)
\rput(1,1.7){\scriptsize $s_m\!-\!1$}\rput(1.2,2.07){\small $\mathcal{A}_m'$}
\rput(0.9,2.3){$\vdots$}
\rput(1,2.6){\scriptsize $s_1\!-\!1$}\rput(1.2,2.97){\small $\mathcal{A}_1'$}
\rput(0.9,3.4){\scriptsize $s_0\!-\!1$}\rput(1.1,3.77){\small $\mathcal{A}_0'$}
\rput(2.1,3.4){\scriptsize $r_0\!-\!1$}\rput(2.3,3.77){\small $\mathcal{B}_0'$}
\psline[linecolor=darkgray](3,1)(2.75,1.7)\rput(2.75,1.7){\textbullet}\rput(2.6,1.55){\scriptsize $-1$}\rput(2.75,1.9){\small $B_l'$}
\psline[linecolor=darkgray,linearc=2](3,1)(3.05,1.9)(2.8,2.6)\rput(2.8,2.6){\textbullet}\rput(2.65,2.45){\scriptsize $-1$}\rput(2.8,2.8){\small $B_1'$}
\psline[linecolor=darkgray,linearc=2](3,1)(3.15,2.2)(2.8,3.4)
\rput(2.8,3.4){\textbullet}\rput(2.65,3.25){\scriptsize $-1$}\rput(2.8,3.6){\small $B_0'$}
\psline[linecolor=darkgray](2.4,2.6)(2.8,2.6)
\psframe[linecolor=darkgray](1.6,2.4)(2.4,2.8)
\psline[linecolor=darkgray](2.4,1.7)(2.8,1.7)
\psframe[linecolor=darkgray](1.6,1.5)(2.4,1.9)
\rput(2,1.7){\scriptsize $r_l\!-\!1$}\rput(2.2,2.07){\small $\mathcal{B}_l'$}
\rput(1.9,2.3){$\vdots$}
\rput(2,2.6){\scriptsize $r_1\!-\!1$}\rput(2.2,2.97){\small $\mathcal{B}_1'$}
}
\psline(0,1)(3,1)
\rput(0.82,1.2){{\small $F'$}}
\rput(-0.18,1.2){{\small $C'$}}
\rput(2,1.25){{\small $E_1'$}}
\rput(3.18,1.15){{\small $E_2'$}}
\rput(0,1){\textbullet}\rput(0,0.75){{\scriptsize $0$}}
\rput(1,1){\textbullet}\rput(0.95,0.75){{\scriptsize $-1$}}
\rput(2,1){\textbullet}\rput(1.95,0.75){{\scriptsize $-a$}}
\rput(3,1){\textbullet}\rput(2.95,0.75){{\scriptsize $-b$}}
\end{pspicture}
\end{center}
Furthermore, $P'(x)=cQ(dx)$, and $P(x)=eQ'(fx)$ for some $c,d,e,f\in \k^{*}$.
\end{lem}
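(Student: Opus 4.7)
The plan is to analyze the reversion $\phi$ by following its canonical decomposition $\phi=\theta_2\varphi_2\theta_1\varphi_1\theta_0$ from $\S\ref{DescriptionReversion2}$ and tracking how each irreducible component of the divisor $\eta_{P,Q}^{-1}(F\cup C\cup L)\cup\bigcup_{i=0}^l D_i$ on $Y$ transforms under each of the five elementary operations. The key conceptual observation is that the auxiliary curves $D_i$, being proper transforms of lines in $\mathbb{P}^2$ through the image of the reversion center $p$, form a pencil on $Y$ whose general member is a smooth rational curve of self-intersection $0$ meeting $F$ transversally at $p$. Because the reversion flips the zigzag $F\tr C\tr E_1\tr E_2$ to $F'\tr C'\tr E_1'\tr E_2'$ while preserving $S=Y\setminus B$, this pencil is exactly the one cut out on $S$ by the new fibration $\pi'$, and hence its members become fibers of $\bar{\pi}':X'\to\mathbb{P}^1$ once lifted to $Y'$.

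From this the type of $Y'$ in the sense of Proposition~\ref{Lem:ModelP1P2} can be read off directly from the intersection pattern of the $D_i$ with the boundary of $Y$: each $D_i$ with $i\geq 1$ is attached via the chain $\mathcal{A}_i\tr A_i$ of length $r_i$, forcing on $Y'$ a component $B_i'$ of the new degenerate fiber of multiplicity $r_i$ preceded by a chain $\mathcal{B}_i'$ of $r_i-1$ $(-2)$-curves; conversely the components $B_j$ of the degenerate fiber of $\bar{\pi}$ together with their chains $\mathcal{B}_j$ become, after the reversion, the new $D_j'$'s and $\mathcal{A}_j'$'s respectively. The three cases of the statement correspond to the three possible positions of the special line $D_0$ of equation $x=\lambda y$: in case $\mathbf{I}a$ it is a $(0)$-curve joining $E_2$ and $F$ and becomes the symmetric $D_0'$ on $Y'$; in case $\mathbf{I}b$, in which $\lambda$ equals some root $\beta_m$ of $Q$, it degenerates to a $(-1)$-curve attached to $\mathcal{B}_m$ and after the reversion takes the place of the $(-2)$-curve $A_0'$ that Lemma~\ref{Lem:P1P2Nota} forces on $Y'$ in case $\mathbf{II}$; in case $\mathbf{III}$ the full chain $\mathcal{B}_0\tr A_0\tr \mathcal{A}_0$ is already present on $Y$ and must be matched on $Y'$ with the swapped chain $\mathcal{A}_0'\tr A_0'\tr \mathcal{B}_0'$, thereby identifying $A_0$ with $A_0'$.

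Once all irreducible components of $Y$ and $Y'$ are matched in this way, the polynomial relations follow from comparing the affine parametrizations involved: on $Y$ the nonzero roots $\alpha_i$ of $wP$ parametrize the points of $L\setminus(C\cup E_2)\cong\kk^*$ to which the $A_i$ are attached, while on $Y'$ the roots $\beta_j'$ of $Q'$ parametrize the points of $E_2'\setminus E_1'\cong\kk$ to which the $B_j'$ are attached. The correspondence $B_i\leftrightarrow D_i'$ established above shows that, under an affine change of coordinates of the form $w\mapsto dw+\lambda$ (respectively $w\mapsto dw$ in case $\mathbf{III}$, where $\lambda=0$ is forced by $P(0)=0$), the roots of $Q$ match the roots of $P'$ with the same multiplicities, which yields the claimed relations $P'(w)=cQ(dw+\lambda)$ and symmetrically $P(w)=eQ'(fw+\lambda')$. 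The main technical obstacle is the meticulous bookkeeping through the five elementary steps of the reversion and verifying, in particular in case $\mathbf{III}$, that the central $(-3)$-curve $A_0$ survives unchanged on the common minimal resolution of $\psi$ and is identified with $A_0'$; this justifies the swapped parametrization $\mathcal{A}_0\leftrightarrow\mathcal{B}_0'$, $\mathcal{B}_0\leftrightarrow\mathcal{A}_0'$ and ensures that no further blow-up is needed beyond those already extracted through the pencil of the $D_i$'s.
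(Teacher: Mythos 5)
Your proposal follows the same route as the paper's proof: decompose the reversion as $\theta_2\varphi_2\theta_1\varphi_1\theta_0$, observe that each $D_i$ is affected only by the blow-up of the centre $p$ and therefore lands in the degenerate fibre on the other side, attached to $E_2'$ with self-intersection dropped by one, deduce from this the dichotomy $\mathbf{I}a\leftrightarrow\mathbf{I}a$, $\mathbf{I}b\leftrightarrow\mathbf{II}$, $\mathbf{III}\leftrightarrow\mathbf{III}$ and the component matching, and finally extract the polynomial relations from the identification between the pencil of directions at the centre and the last boundary component on the other side. In outline this is exactly the paper's argument.

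One justification is incorrect, even though the conclusion it supports is right. You claim that in case $\mathbf{III}$ the translation term is absent because ``$\lambda=0$ is forced by $P(0)=0$''. Nothing forces $\lambda=0$: the centre of a reversion is an arbitrary $\k$-rational point of $F\setminus C$ in all cases (Lemma~\ref{Lem:P1P2Nota} imposes no condition on $\lambda$ in case $\mathbf{III}$; one can at best \emph{normalise} $\lambda=0$ via Proposition~\ref{Prop:EquivReversions}, which is not what the lemma asserts). The real reason the relation is linear is different, and the same mechanism explains the relation $P(w)=eQ'(fw)$ in case $(2)$: when the target of the comparison is the curve $E_2$ of a pair of type $\mathbf{II}$ or $\mathbf{III}$, the roots of the relevant polynomial are read in the coordinate $\beta$ of the directions $u+\beta v=0$ on $E_2\setminus E_1\cong \kk$, and the induced isomorphism with the pencil of lines through the centre fixes \emph{two} points --- the direction of $F$ (at infinity) and the origin $\beta=0$, because the component sitting at $u=0$ (namely $A_0'$, resp.\ $B_0'$) is precisely the image of the curve of direction $\alpha_0=0$ at the centre. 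An automorphism of $\p^1$ fixing $0$ and $\infty$ is a homothety, whence no translation; the value of $\lambda$ simply does not enter. Once this point is repaired, the rest of your outline is sound, modulo the case-by-case bookkeeping you yourself acknowledge.
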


\begin{proof}
We decompose $\phi$ into $\phi=\theta_2\varphi_2\theta_1\varphi_1\theta_0$ as in $\S\ref{DescriptionReversion2}$, and use this decomposition to see that $E_2$ and $E_2'$ correspond respectively to the curves $\mathcal{E}_p'$ and $\mathcal{E}_p$ obtained by blowing-up $p'$ and $p$. 

According to Lemma~\ref{Lem:P1P2Nota}, there are four possibilities for the situation on $Y$, depending on $P,Q,\lambda$.  We study the image of the curves $D_0,\dots,D_l$, which intersect $F$ at the point $p$. 

Let $i\in \{0,\dots, r\}$ and assume that $D_i\subset Y$ does not intersect the boundary $E_2\cup E_1 \cup C\cup F$ at another point (which occurs in all cases, except for $i=0$ in case $\mathrm{I}a$). In the decomposition of  $\phi$, the curve $D_i$ is affected by the blow-up of $p\in D_i$ and then is not affected by all other maps. In consequence, the image $\phi(D_i)$ of $D_i$ on $Y'$ is a curve that intersects the boundary only at one point, being on $E_2'$, and which has self-intersection $\phi(D_i)^2=(D_i)^2-1$. The curve $\phi(D_i)$ is thus contained in the special fibre and corresponds therefore to one of the $B_i'$ if $(D_i)^2=0$ and to $A_0'$ in case $\mathbf{II}$ if $(D_i)^2=-1$. This shows that we obtain case $\mathbf{II}$ if and only if we start from $\mathbf{I}b$. We can only go to $\mathbf{III}$ if we start from $\mathbf{III}$, because of the special singularity, and then we see that $\mathbf{I}a$ goes to $\mathbf{I}a$.

The diagrams above follow from the discussion made on the image of the $D_i$. It remains to see the correspondence between $P,Q,P',Q',\lambda,\lambda'$. The map $\phi$ induces an isomorphism between the blow-up $\mathcal{E}_p$ of $p$ and the line $E_2'\subset Y'$. This isomorphism sends the tangent direction of $D_i$, which has equation $x-\lambda y=\alpha_i z$, onto $\phi(D_i)\cap E_2$. It also sends the direction of $F$, which is $z=0$, onto $E_2'\cap E_1'$. We obtain therefore an isomorphism $\p^1\to E_2'$ which sends $(0:1)$ onto $E_2'\cap E_1'$ and $(1:\alpha_i)$ onto $E_2'\cap \phi(D_i)$ for each $i$. Studying each of the three diagrams gives $P'$ and $Q'$ in terms of $Q$ and $P$.

In the first diagram (case $\mathbf{I}a$ on both sides), $E_2'$ corresponds to the blow-up of $(0:0:1)$, and the intersection of $B_i'$ with $E_2'$ corresponds to the tangent direction of $x=\beta_i'y$ ($\S\ref{PairConstruction}$). The curve $D_0'$ is the tangent direction of $x=\lambda'y$. We obtain an affine automorphism of $\k$ which sends $\alpha_i$ on $\beta_i'$ for $i=1,\dots,l$ and which sends $0$ onto $\lambda'$.  This means that $P(x)=eQ'(fx+\lambda')$ for some $e,f\in \k^{*}$. Doing the same in the other direction, we obtain $P'(x)=cQ(dx+\lambda)$ for $c,d\in \k^{*}$.

In the second diagram (case $\mathbf{I}b$ on $Y$ and $\mathbf{II}$ on $Y'$), the curve $E_2'$ is the blow-up of the point $(u,v)=(0,0)$ obtained after blowing-up $(0:0:1)$ via $(u,v)\mapsto (u,u^{r_0'}v)$, where $r_0'>0$ is the multiplicity of $0$ in $P'(x)$  (see $\S\ref{PairConstruction}$). The intersection of $B_i'$ with $E_2'$ corresponds to the direction of $u=\beta_i'v$, the point $E_1'\cap E_2'$ corresponds to $v=0$, and $A_0'\cap E_2'$ to $u=0$. We obtain an automorphism of $\k^{*}$ that sends $\alpha_i$ onto $\beta_i'$ for  $i=1,\dots,l$, so $P(x)=eQ'(fx)$ for some $e,f\in \k^{*}$. To obtain $P'(x)$ from $Q(x)$, we do the same computation in the other direction: we have an isomorphism $\p^1\to E_2$ which sends $(0:1)$ onto $E_2\cap E_1$ and $(1:\alpha_i')$ onto $E_2\cap \phi^{-1}(D_i')$ for each $i$. Recall that  $\phi^{-1}(D_0')=B_m$ and $\phi^{-1}(D_i')=B_i$ for $i=1,\dots,m$. The curve $E_2$ corresponds to the blow-up of $(0:0:1)$, the intersection of $B_i$ with $E_2$ corresponds to the direction $x=\beta_i y$, and $B_m=\phi^{-1}(D_0')$ corresponds to $x=\lambda y$, which is $x=\beta_m y$. We get an affine automorphism of $\k$ that sends $\alpha_i'$ onto $\beta_i$ for $i=1,\dots,m-1$ and  $0=\alpha_0'$ onto  $\beta_m=\lambda$.  This implies that $P'(x)=cQ(dx+\lambda)$ for some $c,d\in \k^{*}$.

The last diagram is when $Y,Y'$ are in case $\mathbf{III}$, and is symmetrical. As in the second diagram, the curve $E_2'$ is the blow-up of the point $(u,v)=(0,0)$ obtained after blowing-up $(0:0:1)$ via $(u,v)\mapsto (u,u^{r_0'}v)$, where $r_0'>0$ is the multiplicity of $0$ in $P'(x)$. The intersection of $B_i'$ with $E_2'$ corresponds to the direction of $u=\beta_i'v$, the point $E_1'\cap E_2'$ corresponds to $v=0$, and $A_0'\cap E_2'$ to $u=0$. We obtain an automorphism of $\k^{*}$ that sends $\alpha_i$ onto $\beta_i'$ for  $i=1,\dots,l$, so $P(x)=eQ'(fx)$ for some $e,f\in \k^{*}$. And in the other direction, we get $P'(x)=cQ(dx)$ for some $c,d\in \k^{*}$.
\end{proof}

To conclude this section, we give a complete characterization of when two reversions are equivalent in the sense of Definition~\ref{A1fibGraph}, which will be needed in the next section to describe the graphs associated to the surfaces $X\setminus B$.

\begin{prop}\label{Prop:EquivReversions}
Let $(X,B=F\tr C\tr E_1\tr E_2)$ be a pair constructed from polynomials $P,Q\in \k[w]$. For two reversions $\phi_i\colon (X,B)\dasharrow (X_i,B_i)$, $i=1,2$, the following are equivalent:
\begin{enumerate}
\item
The pairs $(X_1,B_1)$ and $(X_2,B_2)$ are isomorphic.
\item
The reversions $\phi_1,\phi_2$ are equivalent, i.e.\ there exists  $\theta\in \Aut(X,B)$ and an isomorphism $\theta':(X_1,B_1)\rightarrow (X_2,B_2)$, such that $\phi_2\circ\theta=\theta'\circ\phi_1$.
\end{enumerate}
Moreover, these equivalent properties are always satisfied if $P(0)=0$.
\end{prop}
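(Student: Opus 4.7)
The implication (2)$\Rightarrow$(1) is immediate: by hypothesis $\theta'$ is an isomorphism of pairs $(X_1,B_1)\to(X_2,B_2)$. The substance is (1)$\Rightarrow$(2) together with the final clause.

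The main step is to reformulate (2) in terms of the centers of the reversions. Writing $p_i\in F\setminus C$ for the center of $\phi_i$ (Definition~\ref{DescriptionReversion1}), I claim (2) is equivalent to the existence of $\theta\in\Aut(X,B)$ with $\theta(p_1)=p_2$. Indeed, comparing the proper base points of the two sides of $\phi_2\circ\theta=\theta'\circ\phi_1$ (both sides biregular off those base points) forces $\theta(p_1)=p_2$. Conversely, given such a $\theta$, the composition $\phi_2\circ\theta$ is itself a reversion of $(X,B)$ centered at $p_1$ (precomposing a reversion with an automorphism of pairs relocates its center without altering its nature); since the reversion with prescribed center is unique up to postcomposition with an isomorphism of target pairs (Definition~\ref{DescriptionReversion1}), there exists $\theta'$ with $\phi_2\circ\theta=\theta'\circ\phi_1$.

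So (1)$\Rightarrow$(2) reduces to producing $\theta\in\Aut(X,B)$ with $\theta(p_1)=p_2$. For this I would apply Lemma~\ref{Lem:Rev01k} to write the polynomial data $(P'_i,Q'_i)$ of $(X_i,B_i)$ as explicit transforms of $(P,Q,\lambda_i)$; by Corollary~\ref{Cor:IsoSameType} both reversions necessarily fall into the same one of the three cases of that lemma. Plugging these expressions into the polynomial criterion of Proposition~\ref{Prop:IsoPairs}(2) for $(X_1,B_1)\cong(X_2,B_2)$ produces specific affine symmetries of $P$ and $Q$ relating $\lambda_1$ with $\lambda_2$. The characterization in Proposition~\ref{Prop:IsoPairs}(3) of $\Aut(X,B)$ as lifts of $(x,y)\mapsto(ax+by,cy)$ then lets me translate these symmetries into a triple $(a,b,c)$ satisfying simultaneously $P(aw)/P(w)\in\k^{*}$, the corresponding $Q$-condition, and the requirement $(a\lambda_1+b)/c=\lambda_2$ that encodes $\theta(p_1)=p_2$.

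For the `moreover' statement, assume $P(0)=0$, so the multiplicity $r_0$ of $0$ as a root of $P$ satisfies $r_0\geq 1$. Proposition~\ref{Prop:IsoPairs}(3) then says that the map $(x,y)\mapsto(x+by,y)$, corresponding to $a=c=1$ and arbitrary $b\in\k$, always lies in $\Aut(X,B)$, because the two conditions $P(w)/P(w)=1$ and $Q(w)/Q(w)=1$ are trivially in $\k^{*}$. Its action on $F\setminus C\cong\mathbb{A}^{1}$ is the translation $\mu\mapsto\mu+b$, which is transitive on $\k$-rational points; taking $b=\lambda_2-\lambda_1$ produces an automorphism of $(X,B)$ sending $p_1$ to $p_2$, so by the reformulation above (1) and (2) both hold automatically.

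The main obstacle is the polynomial bookkeeping in (1)$\Rightarrow$(2), particularly in Case~I (where $P(0),Q(0)\neq 0$ and the translation trick of the final clause is unavailable): one must match the affine symmetries produced by Proposition~\ref{Prop:IsoPairs}(2) on the target side with the very constrained form of symmetries allowed by Proposition~\ref{Prop:IsoPairs}(3) on the source side, using carefully the compatibility of the action on $F$ (where the center $p_i$ lives) with the action on $E_2$ (which parameterizes tangent directions at the origin controlled by the roots of $Q$).
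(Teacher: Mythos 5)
Your proposal follows essentially the same route as the paper's proof: reduce (2) to finding $\theta\in\Aut(X,B)$ with $\theta(p_1)=p_2$ using the fact that a reversion is determined by its center, handle $P(0)=0$ via the translation $(x,y)\mapsto(x+(\lambda_2-\lambda_1)y,y)$ which is always a lift by Proposition~\ref{Prop:IsoPairs}(3) when $r_0\ge 1$, and in case $\mathbf{I}$ combine the relations $P_i=c_iQ(d_iw+\lambda_i)$ from Lemma~\ref{Lem:Rev01k} with the isomorphism criterion of Proposition~\ref{Prop:IsoPairs}(2). The ``bookkeeping'' you flag as the main obstacle is exactly the computation the paper performs --- from $c_1Q(d_1w+\lambda_1)=\alpha c_2Q(d_2\beta w+\lambda_2)$ one extracts $b,c$ with $Q(\frac{w-b}{c})/Q(w)\in\k^{*}$ and $c\lambda_2=\lambda_1+b$, so that $(x,y)\mapsto(x+by,cy)$ lifts to the required $\theta$ --- and it closes without difficulty.
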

\begin{proof}
The implication $(2)\Rightarrow (1)$ is obvious. Conversely, we may suppose that $\phi_1,\phi_2$ are respectively centred at points  $p_1,p_2\in F\setminus C$ which we identify in turn with the points $(\lambda_1:1:0),(\lambda_2:1:0)\in \p^2$ (see \S \ref{PairConstruction}). We denote by $(P_1,Q_1)$ and $(P_2,Q_2)$ the polynomials associated to the pairs $(X_1,B_1)$ and $(X_2,B_2)$. Since the reversions are  uniquely determined by the choice of their proper base-point, assertion $(2)$ is equivalent to the existence of an automorphism $\theta\in \Aut(X,B)$ which sends $p_1$ onto $p_2$. If $P(0)=0$, then the automorphism $(x,y)\mapsto (x+(\lambda_2-\lambda_1)y, y)$ of $\A^2$ lifts to an automorphism $\theta\in \Aut(X,B)$ (Proposition~\ref{Prop:IsoPairs}) such that $\theta(p_1)=p_2$.
So it remains to consider the case where $P(0)\not=0$ (case $\mathbf{I}$). By Lemma~\ref{Lem:Rev01k} (assertions $(1)$ and $(2)$), we have $P_i(w)=c_iQ(d_i w+\lambda_i)$, for some $c_i,d_i\in \k^{*}$. Since $(X_1,B_1)$ and $(X_2,B_2)$ are isomorphic we also have $P_1(w)=\alpha P_2(\beta w)$, for some $\alpha,\beta\in \k^{*}$ (Proposition~\ref{Prop:IsoPairs}). This yields 
$$c_1Q(d_1 w+\lambda_1)=P_1(w)=\alpha P_2(\beta w)=\alpha c_2Q(d_2 \beta w+\lambda_2),$$
which implies (replacing $w$ by $(w-\lambda_1)/d_1$)  that $Q(d_2\beta (w-\lambda_1)/d_1+\lambda_2)/Q(w)=\alpha c_2/c_1\in \k^{*}$. Letting $c=\frac{d_1}{d_2\beta}$ and $b=\lambda_1-\lambda_2c$, we obtain that $Q(\frac{w-b}{c})/Q(w)\in \k^{*}$ and $c\lambda_2=\lambda_1+b$. The first condition guarantees that the automorphism $\nu\colon (x,y)\mapsto (x+by,cy)$ of $\A^2$ lifts to an automorphism of $(X,B)$ (see Proposition~\ref{Prop:IsoPairs}) while the second equality says precisely that the extension of $\nu$ to $\p^2$ maps $p_1=[\lambda_1:1:0]$ onto $p_2=[\lambda_2:1:0]$. 
\end{proof}
\begin{rem}  Proposition~\ref{Prop:EquivReversions} implies in particular that in the graph $\mathcal{F}_S$ associated to $S=X\setminus B$ as in Definition~\ref{A1fibGraph}, two arrows corresponding to reversions are equal if and only if they have the same source and target. Consequently, the graph $\mathcal{F}_S$ does not contain any cycle of length $2$ and each of its vertices is the base vertex of at most one cycle of length $1$.
\end{rem}

\begin{example} \label{ChangingModels} As explained in $\S~\ref{SurfEquations}$, given polynomials $P,Q\in\k[w]$ of degrees $\geq 1$, the surface $S$ in $\mathbb{A}^4=\mathrm{Spec}(\k[x,y,u,v])$ defined by the system of equations
$$\left\{\begin{array}{lcl}
yu&=&xP(x)\\
vx&=&uQ(u)\\
yv&=&P(x)Q(u)\end{array}\right.$$
comes equipped with the $\mathbb{A}^1$-fibration $\pi=\mathrm{pr}_y\mid_S$ induced by the restriction of the rational pencil $\bar{\pi}$ on the standard pair $(X,B=F\tr C\tr E,\bar{\pi})$ associated with the pair $(P,Q)$ via the construction of $\S~\ref{PairConstruction}$. 

The automorphism $(x,y,u,v)\mapsto (u,v,x,y)$ of $\A^4$ induces an isomorphism $\sigma$ of $S$ with the surface $S'\subset \A^4$ defined by the system of equations
$$\left\{\begin{array}{lcl}
yu&=&xQ(x)\\
vx&=&uP(u)\\
yv&=&Q(x)P(u),\end{array}\right.$$
which comes equipped  with the $\mathbb{A}^1$-fibration $\pi'=\mathrm{pr}_y\mid_{S'}$ induced by the restriction of the rational pencil $\bar{\pi}'$ on the standard pair $(X',B'=F'\tr C'\tr E',\bar{\pi}')$ associated with the pair $(P',Q')=(Q,P)$ via the construction of $\S~\ref{PairConstruction}$.

With our choice of coordinates, the closure in $X$ of the general fibers of $\pi'\circ \sigma=\pr_v \mid_S$ all intersect $B$ at the point $p\in F$ with image $\tau(p)=[0:1:0]\in \mathbb{P}^2$, and one checks that the birational map of standard pairs $(X,B)\dashrightarrow (X',B')$ corresponding to $\sigma$ is a reversion centered at $p$.

Note that if $P(0)\not=0$ and $Q(0)=0$ then $S$ equipped with $\pi$ is of type {\bf I} while it is of type {\bf II} when equipped with $\sigma\pi'=\pr_v \mid_S$. 
\end{example}

\section{Graphs of $\mathbb{A}^1$-fibrations and associated graphs of groups}
   Here we apply the results of the previous section to characterize equivalence classes of $\mathbb{A}^1$-fibrations on normal affine surfaces admitting a completion by a standard pair $(X,B)$ of type $(0,-1,-a,-b)$. We also give explicit description of automorphism groups of certain of these surfaces.

\begin{enavant} {\bf Notation.} Given polynomials $P,Q\in\k[w]$, we denote by $[P,Q]$ the isomorphism class of the standard pair $(X,B,\bar{\pi})$ obtained obtained by means of the construction of $\S~\ref{PairConstruction}$. By virtue of Proposition~\ref{Prop:IsoPairs}, $[P,Q]=[P',Q']$ if and only if the corresponding $\mathbb{A}^1$-fibered surfaces $(X\setminus B,\bar{\pi}\mid_{X\setminus B})$ and $(X'\setminus B',\bar{\pi}'\mid_{X'\setminus B'})$ are isomorphic. Recall that by virtue of Proposition~\ref{Prop:IsoPairs}, this holds if and only if  $P'(w)=\alpha P(\beta w)$, $Q'(w)=\gamma Q(\delta w+t)$ , where $\alpha,\beta,\gamma,\delta\in \k^{*}$ and $t\in k$ being $0$ if $P(0)=0$.

 We say that $[P,Q]$ is \emph{equivalent} to $[P',Q']$ if $X\setminus B$ and $X'\setminus B'$ are isomorphic as abstract affine surfaces. With this convention, the vertices of the graph of $\mathbb{A}^1$-fibrations $\mathcal{F}_{X\setminus B}$ of $X\setminus B$ as defined in $\S\ref{A1fibGraph}$ are in one-to-one correspondence with pairs $[P',Q']$ equivalent to $[P,Q]$. In what follows we denote this graph simply by $\mathcal{F}_{[P,Q]}$.  
\end{enavant}

 Note that arrows of the graph $\mathcal{F}_{[P,Q]}$ correspond to equivalence classes of reversions between pairs equivalent to $[P,Q]$. Given one such pair $[P',Q']$, represented by a pair $(X',B'=F'\tr C'\tr E_1'\tr E_2',\overline{\pi}')$, the possible reversions starting from it are parametrized by the $\k$-rational points of the line $F'\setminus C'$. Moreover, if $\sigma_1,\sigma_2$ are two reversions centred at points $p_1,p_2\in F'\setminus C'$, they are equivalent, or give the same arrow, (see Definition~\ref{A1fibGraph}) if and only if there exists an automorphism of $(X',B')$ that sends $p_1$ onto $p_2$. By Proposition~\ref{Prop:IsoPairs}, this always holds when $P(0)=0$. 

\subsection{Affine surfaces of type III}
\indent\newline\noindent As noted above normal affine surfaces corresponding to case {\bf III} in the the construction of $\S~\ref{PairConstruction}$ are always singular and form a distinguished class stable under taking reversions. For such a surface $S$ in $\mathbb{A}^4=\mathrm{Spec}(\k[x,y,u,v])$ defined by a system of equations
$$\left\{\begin{array}{lcl}
yu&=&xP(x)\\
vx&=&uQ(u)\\
yv&=&P(x)Q(u)\end{array}\right.$$
corresponding to a pair $[P,Q]$ with $P(0)=Q(0)=0$, the structure of the graph $\mathcal{F}_{[P,Q]}$ is particularly simple: Indeed, Lemma~\ref{Lem:Rev01k} implies that $[Q,P]$ is the only pair equivalent to $[P,Q]$ and that they can be obtained from each other by performing a reversion. Since $[P,Q]=[Q,P]$ if and only if $Q=\alpha P(\beta w)$ for some $\alpha,\beta\in\k^*$, the corresponding $\mathcal{F}_{[P,Q]}$ is thus 

 $$\xymatrix@R=1mm@C=2cm{
\ar@(ul,dl){ } 
}[P,Q] \;\textrm{ if } Q(w)=\alpha P(\beta w),\; \alpha,\beta\in \k^{*} \quad \textrm{ or } \quad \xymatrix@R=1mm@C=2cm{
[P,Q]\ar@{<->}[r]& [Q,P]
} \textrm{ otherwise }$$

\indent\newline\noindent Denoting by $J_y=\Aut(S,\pr_y)$ and $J_v=\Aut(S,\pr_v)$ the groups of automorphisms of $S$ which preserve the $\mathbb{A}^1$-fibrations $\pr_y\colon S\to \A^1$ and $\pr_v\colon S\to \A^1$ respectively and by $\mathrm{Diag}(S)\subset \Aut(S)$  the subgroup consisting of restrictions to $S$ of diagonal automorphisms of $\A^4$ preserving $S$, we obtain the following description of automorphism groups of affine surfaces of type {\bf III}:

\begin{prop}
For an affine surface $S$ in $\mathbb{A}^4=\mathrm{Spec}(\k[x,y,u,v])$ defined by the equations
$$\left\{\begin{array}{lcl}
yu&=&xP(x)\\
vx&=&uQ(u)\\
yv&=&P(x)Q(u).\end{array}\right .$$
where $P,Q$ are non constant polynomials with $P(0)=Q(0)=0$, the following holds: 
\begin{enumerate}
\item 
Every $\mathbb{A}^1$-fibration on $S$ is conjugated either to $\pr_y\colon S\to \A^1$ or $\pr_v\colon S\to \A^1$ and these two fibrations are isomorphic to each other if and only if $Q(w)=\alpha P(\beta w)$ for some $\alpha,\beta\in \k^{*}$.
\item
If $Q(w)=\alpha P(\beta w)$ for some $\alpha,\beta\in \k^{*}$, then $[P,Q]=[P,P]$ and, assuming further that $Q=P$, the group $\Aut(S)$ is the amalgamated product $A\star_{\mathrm{Diag}} J_y$ of $J_y=\Aut(S,\pr_y)$ and the subgroup $A$ of $\Aut(S)$ generated by $\mathrm{Diag}(S)$ and the involution $\sigma\colon (x,y,u,v)\to (u,v,x,y)$. 
 \item
If $Q(w)\not=\alpha P(\beta w)$, for any $\alpha,\beta\in \k^{*}$, then $J_y\cap J_v=\mathrm{Diag}(S)$ and $\Aut(S)$ is the amalgamated product $J_y\star_{\mathrm{Diag}(S)} J_v$ of $J_y=\Aut(S,\pr_y)$ and $J_v=\Aut(S,\pr_v)$.
\end{enumerate}
\end{prop}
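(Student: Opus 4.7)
The plan is first to determine the graph $\mathcal{F}_{[P,Q]}$ completely, and then to apply the structure theorem of \cite[Theorem 4.0.11]{first} recalled in \S\ref{GraphGroups}. For the graph, I would argue as follows. By Lemma~\ref{Lem:Rev01k}$(3)$, every reversion issued from a type $\mathbf{III}$ pair lands in another type $\mathbf{III}$ pair with polynomials swapped up to rescaling, so by Proposition~\ref{Prop:IsoPairs}$(2)(b)$ the target of any reversion from $[P,Q]$ is $[Q,P]$. Since $P(0)=0$, Proposition~\ref{Prop:EquivReversions} then ensures that all such reversions are equivalent. Combined with the connectivity of $\mathcal{F}_S$ (\cite[Proposition 4.0.7]{first}) and the fact that fibered modifications preserve vertices, $\mathcal{F}_{[P,Q]}$ has exactly the announced shape: a single vertex with a loop when $[P,Q]=[Q,P]$, which by Proposition~\ref{Prop:IsoPairs}$(2)(b)$ means $Q(w)=\alpha P(\beta w)$, and two vertices joined by a single edge otherwise.

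Assertion $(1)$ will then be immediate from the bijection between vertices of $\mathcal{F}_S$ and equivalence classes of $\mathbb{A}^1$-fibrations on $S$ recalled in \S\ref{AutSFS}: by construction $\pr_y$ represents $[P,Q]$, and Example~\ref{ChangingModels} shows that $\pr_v=\pr_y\circ\sigma^{-1}$, with $\sigma\colon (x,y,u,v)\mapsto(u,v,x,y)$, represents $[Q,P]$.

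For $(2)$ and $(3)$ I would invoke \cite[Theorem 4.0.11]{first} to identify $\mathrm{Aut}(S)$ with the fundamental group of $\mathcal{F}_S$ as a graph of groups. The vertex groups are $J_y$ at $[P,Q]$ and (in case $(3)$) $J_v$ at $[Q,P]$. The key technical point is to identify the edge group: by the definition in \S\ref{GraphGroups}, an element of $G_a$ embeds into the vertex group as an automorphism $\phi'$ of the pair $(X,B)$ such that $\sigma\phi'\sigma^{-1}$ is again such an automorphism, so $\phi'\in J_y\cap J_v$. Conversely, any element of $J_y\cap J_v$ extends to such a pair $(\phi,\phi')$. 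A direct verification using the defining equations $yu=xP(x)$, $vx=uQ(u)$, $yv=P(x)Q(u)$ shows that any automorphism of $S$ preserving the two functions $y$ and $v$ up to pullback by automorphisms of $\mathbb{A}^1$ must scale each of $x,y,u,v$, whence $J_y\cap J_v=\mathrm{Diag}(S)$. This identification of the edge group is the main delicate step.

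In case $(3)$ the fundamental group of the graph of groups is then exactly the amalgamated product $J_y\star_{\mathrm{Diag}(S)}J_v$. In case $(2)$, after using Proposition~\ref{Prop:IsoPairs}$(2)(b)$ to reduce to $Q=P$, $\sigma$ becomes a genuine involutive automorphism of $S$ realizing the loop, and the fundamental group is a priori an HNN extension of $J_y$ over $\mathrm{Diag}(S)$ with stable letter $\sigma$. Since $\sigma$ has order $2$ and normalizes $\mathrm{Diag}(S)$ via the permutation $\mathrm{diag}(a_1,a_2,a_3,a_4)\mapsto\mathrm{diag}(a_3,a_4,a_1,a_2)$, the subgroup $A=\langle\mathrm{Diag}(S),\sigma\rangle$ satisfies $A\cap J_y=\mathrm{Diag}(S)$, and the HNN presentation collapses into the amalgamated product $A\star_{\mathrm{Diag}(S)}J_y$, completing the proof.
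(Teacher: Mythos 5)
Your strategy reaches the right conclusions, and the determination of the graph $\mathcal{F}_{[P,Q]}$ and the proof of assertion (1) coincide with the paper's. For assertions (2) and (3), however, you take a genuinely different route: the paper never invokes the graph-of-groups theorem \cite[Theorem 4.0.11]{first} here. Instead it works directly with Proposition~\ref{mainfirst}: it shows that $\Aut(S)$ is generated by $J_y$ and $A$ (resp.\ by $J_y$ and $J_v$), and then that every alternating word in $A\setminus J_y$ and $J_y\setminus A$ (resp.\ in $J_v\setminus J_y$ and $J_y\setminus J_v$) admits a reduced decomposition of positive length, hence is nontrivial. This yields the amalgamated product by hand, at the cost of an explicit case analysis; your route is more conceptual but transfers the burden to a correct application of the structure theorem.

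Two of your steps need repair. First, the edge group: the paper does not compute $J_y\cap J_v=\mathrm{Diag}(S)$ from the defining equations but geometrically. If $a\in J_y\cap J_v$ then $a=\sigma^{-1}b\sigma$ with $b$ an automorphism or a fibered modification of $(X',B')$; since a fibered modification conjugated by a reversion has a reduced decomposition of length $3$, both $a$ and $b$ must be automorphisms of pairs, and the equality forces $a$ to fix the center of $\sigma$; by Proposition~\ref{Prop:IsoPairs}(3) the stabilizer of that point consists exactly of the lifts of $(x,y)\mapsto(ax,cy)$, i.e.\ of $\mathrm{Diag}(S)$. Your coordinate computation can be made to work, but you must actually rule out the summand $yR(y)$ in $x\mapsto ax+yR(y)$, which is not a one-line check. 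Second, and more seriously, in case (2) the unique arrow of $\mathcal{F}_S$ is its own inverse (by Proposition~\ref{Prop:EquivReversions} the reversions $\sigma$ and $\sigma^{-1}$ are equivalent), so the fundamental group of the graph of groups is \emph{not} an HNN extension. Your argument that ``the HNN presentation collapses since $\sigma$ has order $2$'' only exhibits $\Aut(S)$ as a quotient of $\langle J_y,t\mid t^{2}=1,\ tht^{-1}=\sigma h\sigma^{-1}\ (h\in\mathrm{Diag}(S))\rangle\cong A\star_{\mathrm{Diag}(S)}J_y$; the injectivity of that surjection is precisely what has to be proved and is not a formal consequence of $\sigma^{2}=1$. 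One must either subdivide the inverted loop (introducing a midpoint vertex with vertex group $A$) and apply the theorem to the resulting segment, or argue as the paper does with reduced words. With these two points filled in, your proof is complete.
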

\begin{proof}
The first assertion is an immediate consequence of the description of $\mathcal{F}_S=\mathcal{F}_{[P,Q]}$. Similarly as in  Example~\ref{ChangingModels}, we consider $S$ as $X\setminus B$ where $(X,B=F\tr C\tr E,\bar{\pi})$ is associated with the pair $(P,Q)$ via the construction of $\S~\ref{PairConstruction}$ in such a way that $\pr_y\mid_S$ coincides with $\bar{\pi}\mid_S$. We denote by $\sigma\colon (X,B)\dasharrow (X',B')$ the reversion corresponding to the morphism $(x,y,u,v)\mapsto (u,v,x,y)$ where $(X',B')$ is the standard pair associated with the pair of polynomials $(Q,P)$. By virtue of Proposition~\ref{Prop:IsoPairs}, elements of $\Aut(X,B)$ are lifts of automorphisms of $\mathbb{A}^2$ the form $(x,y)\mapsto (ax+by,cy)$ satisfying $P(aw)/P(w)\in \k^{*}, Q(\frac{a^{r_0+1}}{c}\cdot w)/Q(w)\in \k^{*}$, where $r_0\ge 1$ is the multiplicity of $0$ as a root of $P$. The extension of such an automorphisms to $\p^2$ fixes the center $[1:0:0]$ of $\sigma$ if and only if $b=0$. If so, we write $\lambda=P(aw)/P(w)=a^{r_0}$ and $\mu=Q(\frac{a^{r_0+1}}{c}\cdot w)/Q(w)=Q(\frac{a\lambda}{c}w)/Q(w)$, and check that the lift to $S$ of the corresponding automorphism coincides with the restriction of the diagonal automorphism $(x,y,u,v)\mapsto (ax,cy,\frac{a\lambda}{c} u, \frac{\lambda\mu}{c}v)$ of $\A^4$. Furthermore, every diagonal automorphism of $\A^4$ which preserves $S$ is necessarily of this form. This implies in particular the group $\mathrm{Diag}(S)$ coincides precisely with the subgroup of $\Aut(X,B)$ consisting of lifts of automorphisms whose extensions to $\p^2$ fix the point $[1:0:0]$. 
By Proposition~\ref{mainfirst}, every birational map $f\colon (X,B)\dasharrow (X,B)$ is either an element of $\Aut(X,B)$ (and in this case belongs to $J_y$) or decomposes into a finite sequence of fibered modifications and reversions. Since all reversions are equivalent to $\sigma$ or $\sigma^{-1}$, we can assume that $f$ is a product of $\sigma$, $\sigma^{-1}$, and  fibered modifications and automorphisms of the pairs $(X,B)$ and $(X',B')$.

$(2)$ If $Q(w)=\alpha P(\beta w)$ for some $\alpha,\beta\in \k^{*}$, we can assume further that $Q=P$ so that $\sigma$ becomes in fact an automorphism of $S$. This implies that $\Aut(S)$ is generated by $J_y$ and $\sigma$, and hence by $J_y$ and $A=\langle \mathrm{Diag}(S),\sigma\rangle$. It remains to see that every element $h=j_ma_m\dots j_2a_ 2 j_1a_1$ with $a_l\in A\setminus J_y$, $j_l\in J_y\setminus A$ is not trivial. By definition every $j_l\in J_y\setminus A$ is either a fibered modification $(X,B)\dasharrow (X,B)$ or an automorphism which does not fix the center of the reversion $\sigma$. On the other hand, every $a_l\in A\setminus J_y$ is a reversion $(X,B)\dasharrow (X,B)$ which has the same center as $\sigma$. It follows that $h$ is either an element of $A\setminus J_y$ or admits
a reduced decomposition containing at least a reversion. So $h$ is never trivial, as desired.

$(3)$ If $Q(w)\not=\alpha P(\beta w)$ for any $\alpha,\beta\in \k^{*}$, then $(X',B')$ is not isomorphic to $(X,B)$. In particular, every element $f\colon (X,B)\dasharrow (X',B')$ decomposes into  
$$f=\sigma^{-1} a_n'\sigma \dots \sigma^{-1}a_2'\sigma_2a_2\sigma^{-1} a_1'\sigma a_1,$$
where the $a_i$ and $a_j'$ are either fibered modifications or automorphisms of $(X,B)$ and $(X',B')$ respectively. In consequence, every $a_i$ is an element of $J_y$ and every $\sigma^{-1}a_i'\sigma$ is an element of $J_v$, which shows that $\Aut(S)$ is generated by $J_y$ and $J_v$. Furthermore, $a\in J_y$ is an element of $J_v$ if and only if it is equal to $\sigma^{-1} b\sigma$ for a certain automorphism or a fibered modification $b$ of $(X',B')$. The equality $a=\sigma^{-1} b\sigma$ implies that $a$ and $b$ are automorphisms of $(X,B)$ and $(X',B')$ respectively, and that $a$ (respectively $b$) fixes the center of $\sigma$ (respectively of $\sigma^{-1}$). In particular, $J_y \cap J_v=\mathrm{Diag}(S)$.

It remains to show that every element $h=b_ma_m\dots b_2a_ 2 b_1a_1$ with $a_l\in J_y\setminus J_v$, $b_l=\sigma^{-1} b_l' \sigma\in J_v\setminus J_y$ is not trivial. By virtue of the above description, each $a_i$ is either an automorphism of $(X,B)$ not fixing the center of $\sigma$ or a fibered modification while each $b_j'$ is either an automorphism of $(X',B')$ not fixing the center of $\sigma^{-1}$ or a fibered modification of $(X',B')$. This implies that $h$ is either an element of $\Aut(X,B)$ not fixing the center of $\sigma$ or admits a reduced decomposition containing at least a reversion. In any case, $h$ is not trivial which achieves the proof. 
\end{proof}

\begin{example} Let $S$ be the surface in $\mathbb{A}^4=\mathrm{Spec}(\k[x,y,u,v])$ defined by the equations
$$\left\{ \begin{array}{lcl}
yu&=&x^2(x-1)\\
vx&=&u^2(u-1)\\
yv&=&x(x-1)u(u-1)\end{array} \right.$$
corresponding to the polynomials $P(w)=Q(w)=w(w-1)$. Since $P(aw)/P(w)$ and $Q(\frac{a^2}{c}\cdot w)/Q(w)$ belong to $\k^*$ if and only if $a=c=1$, it follows from the proof of the above proposition that $\mathrm{Diag}(S)=\{\mathrm{id}_S\}$. Furthermore, one checks that the group $J_y=\Aut(S,\pr_y)$ consist of lifts to $S$ via the projection $\mathrm{pr}_{x,y}:S\rightarrow \mathbb{A}^2=\mathrm{Spec}(\k[x,y])$ of automorphisms of $\mathbb{A}^2$ of the form $(x,y)\mapsto (x+y^2R(y),y)$, where $R\in \k[y]$ is an arbitrary polynomial. So $J_y$ is isomorphic as a group to $(\k[y],+)\simeq \mathbb{G}_{a,k}^{\infty}$ and we conclude that $\Aut(S)$ is isomorphic to the free product $\mathbb{Z}_2\star \mathbb{G}_{a,k}^{\infty}$. 
\end{example}

\begin{example} Let $S$ be the surface in $\mathbb{A}^4=\mathrm{Spec}(\k[x,y,u,v])$ defined by the equations
$$\left\{ \begin{array}{lcl}
yu&=&x^2(x-1)\\
vx&=&u^(u-1)\\
yv&=&x(x-1)(u-1)\end{array} \right.$$
corresponding to the polynomials $P(w)=w(w-1)$ and $Q(w)=w-1$. Again, the choice of $P$ and $Q$ guarantees that $\mathrm{Diag}(S)=\{\mathrm{id}_S\}$. Similarly as in the previous example, the groups $J_y=\Aut(S,\pr_y)$ and $J_v=\Aut(S,\pr_v)$ consists of lifts to $S$ via the projections $\mathrm{pr}_{x,y}$ and $\mathrm{pr}_{u,v}$ respectively of automorphisms of $\mathbb{A}^2$ of the form $(x,y)\mapsto (x+y^2R_1(y),y)$, where $R_1\in \k[y]$, and $(u,v)\mapsto (u+vR_2(v),v)$ where $R_2\in \k[v]$. It follows that $\Aut(S)$ is isomorphic to the free product $\mathbb{G}_{a,k}^{\infty} \star \mathbb{G}_{a,k}^{\infty}$ of two copies of $\mathbb{G}_{a,k}^{\infty}$. 
\end{example}

\subsection{Affine surfaces of types I and II}
\indent\newline\noindent In contrast with surfaces of type {\bf III}, Example \ref{ChangingModels} shows that in general affine surfaces corresponding to case {\bf I} and {\bf II} in the construction of $\S~\ref{PairConstruction}$ can be obtained from each other by performing reversions. Recall that these models correspond to pairs $[P,Q]$  such that either $P(0)\neq 0$ or $P(0)=0$ but $Q(0)\neq 0$. The associated graph $\mathcal{F}_{[P,Q]}$ is quite complicated, in particular infinite as soon as the field $\k$ is, as shown by the following result:

\begin{prop}\label{Prop:Tapis}
Let $P,Q$ be two polynomials of degree $\ge 1$, and assume that $P(0)\not=0$. The set of pairs equivalent to $[P,Q]$ is $$\left\{[P(w+a),Q(w+b)],[Q(w+b),P(w+a)]\ \Big|\ a,b\in \k, (P(a),Q(b))\not=(0,0)\right\}$$
The graph  $\mathcal{F}_{[P,Q]}$ associated to $[P,Q]$ has the following structure
\[\xymatrix@R=0.01cm@C=1cm{
[Q(w+\lambda),P(w+a)]\ar@{<->}[rd]&&&[Q(w+c),P(w+\xi)]\\
\vdots& [P(w+a),Q(w)]\ar@{<->}[r]\ar@{<->}[rdddd]\ar@{<->}[rddd]\ar@{<->}[rdd]& \ar@{<->}[ldddd]\ar@{<->}[lddd]\ar@{<->}[ldd][Q(w+c),P(w)]\ar@{<->}[rd]\ar@{<->}[ru]&\vdots\\
[Q(w+\mu),P(w+a)]\ar@{<->}[ru]&\vdots&\vdots&[Q(w+c),P(w+\epsilon)]\\
&\vdots&\vdots\\
[Q(w+\lambda),P(w+b)]\ar@{<->}[rd]&\vdots&\vdots&[Q(w+d),P(w+\xi)]\\
\vdots& [P(w+b),Q(w)]\ar@{<->}[r]& [Q(w+d),P(w)]\ar@{<->}[rd]\ar@{<->}[ru]&\vdots\\
[Q(w+\mu),P(w+b)]\ar@{<->}[ru]&&&[Q(w+d),P(w+\epsilon)]}\]
where $P(a)P(b)Q(c)Q(d)\not=0$, $Q(\lambda)=Q(\mu)=P(\xi)=P(\epsilon)=0$.
\end{prop}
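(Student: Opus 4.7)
The plan is to derive both the vertex set and the arrow set of $\mathcal{F}_{[P,Q]}$ from Lemma~\ref{Lem:Rev01k} and Proposition~\ref{Prop:EquivReversions} by iterating reversions starting from a chosen representative of $[P,Q]$. Because $P(0)\neq 0$ places the starting pair in type $\mathbf{I}$, Lemma~\ref{Lem:Rev01k} shows that only cases $(1)$ (when $Q(\lambda)\neq 0$) and $(2)$ (when $Q(\lambda)=0$) can occur at each step; we never visit a type $\mathbf{III}$ vertex, so case $(3)$ of that lemma is never used in the connected component containing $[P,Q]$.

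I would begin by describing the vertex set. Given a representative pair with polynomials $(P,Q)$ and a rational center $\lambda\in k$ for the reversion, Lemma~\ref{Lem:Rev01k} yields a target pair whose first polynomial is $cQ(dw+\lambda)$ and whose second polynomial is proportional to $P(w+\lambda')$ for some $\lambda'\in k$ and some constants $c,d\in k^*$. After absorbing the normalizations allowed by Proposition~\ref{Prop:IsoPairs}, this target is represented by $[Q(w+\lambda),P(w+\lambda')]$, with the constraint $(Q(\lambda),P(\lambda'))\neq(0,0)$ guaranteed by the exclusion of type~$\mathbf{III}$. An induction on the number of reversions then shows that every pair equivalent to $[P,Q]$ has the claimed form $[P(w+a),Q(w+b)]$ or $[Q(w+b),P(w+a)]$, the two options corresponding to the parity of the number of reversions used to reach it.

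Next I would enumerate the arrows leaving a fixed central vertex, say $[P(w+a),Q(w)]$ with $P(a)\neq 0$; this vertex lies in type~$\mathbf{I}$, so translation of the second polynomial leaves the isomorphism class unchanged by Proposition~\ref{Prop:IsoPairs}(a). Reversions from this vertex are parametrized by $c\in k$, and Lemma~\ref{Lem:Rev01k} yields two regimes: if $Q(c)\neq 0$ (case~$(1)$), the target is a central vertex $[Q(w+c),P(w)]$, since the outcome is again type~$\mathbf{I}$; if $Q(c)=0$ (case~$(2)$), we land in type~$\mathbf{II}$, where Proposition~\ref{Prop:IsoPairs}(b) forbids free translation of the second polynomial, so the target is the outer vertex $[Q(w+c),P(w+a)]$. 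A symmetric analysis starting from the central vertex $[Q(w+c),P(w)]$ yields the mirror arrows to the outer vertices $[Q(w+c),P(w+\xi)]$ with $\xi$ a root of $P$. Proposition~\ref{Prop:EquivReversions} then ensures that each arrow is uniquely determined by the isomorphism class of its target, so no arrow gets counted twice.

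The main obstacle is the bookkeeping of self-symmetries of $P$ and $Q$: whenever $P(w+a_1)=\alpha P(\beta w+a_2)$ for some $\alpha,\beta\in k^*$ and $a_1\neq a_2$, the labels $[P(w+a_1),Q(w)]$ and $[P(w+a_2),Q(w)]$ designate the same vertex, and similarly in the other orientation. I would handle this by first treating the generic situation where $P$ and $Q$ have no non-trivial affine self-symmetries, so the parametrization by $(a,b)$ is faithful, and then invoking Proposition~\ref{Prop:IsoPairs} vertex by vertex to identify coincidences in the general case. Once this bookkeeping is in place, combining the vertex and arrow enumerations above directly produces the graph displayed in the proposition.
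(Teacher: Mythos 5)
Your proposal is correct and follows essentially the same route as the paper: the published proof likewise reduces everything to Lemma~\ref{Lem:Rev01k} and Proposition~\ref{Prop:EquivReversions}, noting that a reversion from a vertex $[P(w+a),Q(w+b)]$ with $P(a)\neq 0$ reaches $[Q(w+c),P(w+a)]$ for every center $c\in\k$ (the translation in the second slot being absorbable exactly when the new first polynomial does not vanish at $0$), while a type~$\mathbf{II}$ vertex reverts only to its unique type~$\mathbf{I}$ partner. Your additional bookkeeping of affine self-symmetries and the explicit exclusion of case~$(3)$ of Lemma~\ref{Lem:Rev01k} are left implicit in the paper's two-line argument but are already handled by Proposition~\ref{Prop:IsoPairs}.
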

\begin{proof}
Follows from Lemma~\ref{Lem:Rev01k} and Proposition~\ref{Prop:EquivReversions}: 
\begin{enumerate}
\item
Starting from $[P(w+a),Q(w+b)]$ with $P(a)\not=0$ and performing any reversion, we obtain all $[Q(w+c),P(w+a)]$ for  $c\in \k$.
\item
Starting from $[P(w+a),Q(w+b)]$ with $P(a)=0$ and $Q(b)\not=0$ and performing any reversion, we only obtain  $[Q(w+b),P(w+a)]$, which is equivalent to $[Q(w+b),P(w+c)]$ for any  $c\in \k$.
\end{enumerate}
This yields the result.
\end{proof}
\begin{prop} Let $\k$ be an uncountable field and let $P,Q\in\k[w]$ be polynomials having at least $2$ distinct roots in an algebraic closure $\overline{\k}$ of $\k$ and such that $P(0)\not=0$. Then for the affine surface $S$ in $\mathbb{A}^4=\mathrm{Spec}(\k[x,y,u,v])$ defined by the system of equations
$$\left\{\begin{array}{lcl}
yu&=&xP(x)\\
vx&=&uQ(u)\\
yv&=&P(x)Q(u),\end{array} \right.$$
the following holds:
\begin{enumerate}
\item $S$ admits uncountably many isomorphism classes of $\A^1$-fibrations.

\item The subgroup $H\subset \Aut(S)$ generated by all automorphisms of $\A^1$-fibrations is not generated by a countable union of algebraic groups.

\item The subgroup $\Aut(S)_{\mathrm{alg}}\subset \Aut(S)$ generated by all algebraic subgroups of $\Aut(S)$ is is not generated by a countable union of algebraic groups.

\item The quotient of $\Aut(S)$ by its normal subgroup $\Aut(S)_{\mathrm{alg}}$ contains a free group over an uncountable set of generators. Furthermore, the same holds for $\Aut(S)/H\simeq \Pi_1(\mathcal{F}_S)$ since $H\subset \Aut(S)_{\mathrm{alg}}$. 
\end{enumerate}
\end{prop}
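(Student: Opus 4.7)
My plan splits the proof along parts~(1)--(4). For~(1), I would combine Propositions~\ref{Prop:IsoPairs} and~\ref{Prop:Tapis}: the vertices of $\mathcal{F}_S$ include all classes $[P(w+a),Q(w+b)]$ with $a\in \k$ satisfying $P(a)\neq 0$ and $b\in \k$, and two such classes coincide iff $(a,b)$ and $(a',b')$ are linked by a pair of affine self-similarities of $P$ and of $Q$. Since such self-similarities permute the roots of a polynomial with at least two roots, they form a finite group, so each equivalence class has finitely many representatives. The uncountability of $\k$ then produces uncountably many vertices of $\mathcal{F}_S$, proving~(1).

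For~(4), I would first note that the set of reversions starting at a fixed vertex $[(X,B)]$ is parametrized up to equivalence by orbits of the finite-dimensional algebraic group $\Aut(X,B)$ on the affine line $F\setminus C$, of which there are uncountably many. By the remark following Proposition~\ref{Prop:EquivReversions}, distinct arrows have distinct targets except for at most one self-loop per vertex, so $\mathcal{F}_S$ has uncountably many non-self-loop edges and $\Aut(S)/H\simeq \Pi_1(\mathcal{F}_S)$ is a free group of uncountable rank. By Remark~\ref{RemaLoop1}, the image of $\Aut(S)_{\mathrm{alg}}$ in $\Pi_1(\mathcal{F}_S)$ is the normal closure of the self-loop generators, so the quotient $\Aut(S)/\Aut(S)_{\mathrm{alg}}$ is the quotient of $\Pi_1(\mathcal{F}_S)$ by the normal closure of a subset of its free generators, hence a free group on the remaining generators---the non-self-loop non-tree edges---still of uncountable rank.

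For~(2) and~(3), I would argue by contradiction: suppose $K\subset \Aut(S)$ equal to $H$ or to $\Aut(S)_{\mathrm{alg}}$ is generated by countably many algebraic subgroups $G_1,G_2,\dots$. By Proposition~\ref{Prop:AlgGroups}, each $G_i$ is, up to conjugation in $\Aut(S)$, contained either in a vertex group $G_{v_i}$ or in a $\mathbb{Z}/2\mathbb{Z}$-extension of $\Aut(X_{v_i},B_{v_i})$ by a self-loop reversion at $v_i$. In terms of the graph of groups structure from~\S\ref{GraphGroups} and its associated Bass-Serre tree $T$ (in the sense of \cite{Ser}), each $G_i$ is elliptic: it fixes either a vertex of $T$ or the midpoint of a self-loop edge. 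Consequently $K$ preserves the minimal invariant subtree $T_0\subset T$ containing these countably many fixed points; $T_0$ projects onto a countable subgraph of $\mathcal{F}_S$. Using~(1), I would pick a vertex $v\in \mathcal{F}_S$ not in this projection and let $\tilde v\in T$ be the lift of $v$ corresponding to the chosen inclusion $G_v\hookrightarrow \Aut(S)$. For any $g\in K\cap G_v$, $g$ fixes $\tilde v$ and preserves $T_0$, hence fixes the closest point $p\in T_0$ to $\tilde v$ and the entire geodesic $[\tilde v,p]$; in particular it fixes the edge $\tilde e$ of this geodesic adjacent to $\tilde v$. The stabilizer of $\tilde e$ is an edge group $G_a$ of the graph of groups, which by~\S\ref{GraphGroups} is a finite-dimensional algebraic subgroup of $\Aut(X_v,B_v)$. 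Thus $K\cap G_v$ is contained in a single algebraic subgroup of bounded dimension, whereas $G_v=\Aut(X_v\setminus B_v,\pi_v)$---which contains lifts of all triangular automorphisms $(x,y)\mapsto (ax+R(y),cy)$ with $R\in \k[y]$ of arbitrary degree (see~\S\ref{Sec:Action})---is a strictly increasing union of algebraic subgroups of unbounded dimension. This would contradict $G_v\subset H\subset K$ for~(2) or $G_v\subset \Aut(S)_{\mathrm{alg}}\subset K$ for~(3).

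The hard part will be making this Bass-Serre step rigorous: checking that every algebraic subgroup of $\Aut(S)$ acts elliptically on $T$ (in case~(2) of Proposition~\ref{Prop:AlgGroups} one must identify the fixed midpoint of the self-loop edge), that edge stabilizers in $T$ indeed coincide with the edge groups of the graph of groups, and that for $\tilde v\notin T_0$ the geodesic argument cleanly picks out a single such edge group---all of which should follow from the explicit description in~\S\ref{GraphGroups} together with the decomposition theorem of Proposition~\ref{mainfirst}.
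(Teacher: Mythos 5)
Your parts (1)--(3) are sound. Part (1) matches the paper's argument (finiteness of the group of affine self-similarities of $P$ and of $Q$, via Proposition~\ref{Prop:IsoPairs} and Proposition~\ref{Prop:Tapis}). For (2)--(3) you take a genuinely different route: the paper simply observes that a countable family $(G_i)$ of algebraic subgroups only involves a countable set $\mathcal{S}$ of vertex classes, picks $[(X_a',B_a')]\notin\mathcal{S}$, and exhibits a conjugate $\mu^{-1}\hat{G}\mu$ of a group of fibered modifications at that vertex which cannot lie in $\langle G_i\rangle$; you instead run the Bass--Serre tree of the graph of groups of \S\ref{GraphGroups}, use ellipticity of algebraic subgroups (Proposition~\ref{Prop:AlgGroups}) to trap $K\cap G_v$ in an edge stabilizer for an unseen vertex $v$, and contradict the unbounded dimension of $G_v$. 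That argument is workable and arguably more conceptual, at the cost of the verifications you yourself flag (ellipticity in case 2(a) uses that the product of two reversions in $G^{\psi}$ is an automorphism, so the reversion inverts an edge and fixes its midpoint; edge stabilizers are conjugates of the groups $G_a\hookrightarrow\Aut(X_a,B_a)$, which are algebraic of bounded dimension). Note that case 2(b) of Proposition~\ref{Prop:AlgGroups} is excluded here because $\deg P\geq 2$ forces a boundary component of self-intersection $\leq -3$.

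The genuine gap is in part (4). From "uncountably many inequivalent reversions out of each vertex, with pairwise distinct targets" you conclude that $\Pi_1(\mathcal{F}_S)$ is free of uncountable rank. This does not follow: a vertex with uncountably many edges to distinct targets is a star, and a graph can have uncountably many edges and trivial fundamental group. What is needed, and what your argument never produces, is an uncountable family of independent cycles. This is exactly the content of Proposition~\ref{Prop:Tapis}: the type-$\mathrm{I}$ vertices $[P(w+a),Q(w)]$ and $[Q(w+c),P(w)]$ form two uncountable families joined by a complete bipartite pattern of arrows, yielding the $4$-cycles $[P,Q]\leftrightarrow[Q,P]\leftrightarrow[P(w+a),Q]\leftrightarrow[Q(w+a),P]\leftrightarrow[P,Q]$. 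The paper's proof of (4) realizes these cycles by explicit automorphisms $\zeta_a=\sigma_a'\tau_a\sigma_a\tau$ and checks, via the reduced-decomposition criterion of Proposition~\ref{mainfirst}, that products $\zeta_a\zeta_b$, $\zeta_a\zeta_b^{-1}$, $\zeta_a^{-1}\zeta_b$ remain reduced, which gives both the freeness and (with Remark~\ref{RemaLoop1}) the trivial intersection with $\Aut(S)_{\mathrm{alg}}$; you must supply an equivalent step. A secondary, repairable overstatement: the image of $\Aut(S)_{\mathrm{alg}}$ in $\Pi_1(\mathcal{F}_S)$ is only \emph{contained in} the normal closure $N$ of the length-$1$ loops, not equal to it; your conclusion survives because $\Pi_1/M$ with $M\subseteq N$ surjects onto the free group $\Pi_1/N$ and free groups lift through surjections, but as written the claim "is the normal closure of the self-loop generators" is not justified.
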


\begin{proof}
The conditions on $P$ and $Q$ imply that $P(\alpha w+\beta)/P(w)\in \k^{*}$ for only finitely many pairs $(\alpha,\beta)\in \k^{*}\times \k$, and the same holds for $Q$. Furthermore, the fact that the degree of $P$ and $Q$ is at least $2$ implies that for every standard pair $(X,B)$ with $X\setminus B\simeq S$, the boundary $B$ contains at least an irreducible component with self-intersection $\le -3$.

Suppose that $Q(w)= \alpha P(\beta w)$ for some $\alpha,\beta\in \k^{*}$.  Choosing $t\in \k$ general enough, we then have $P(\gamma(w+t))/P(w)\not\in \k^*$ for any $\gamma\in \k^{*}$. Applying Proposition~\ref{Prop:IsoPairs}, we can replace $Q(w)$ with $P(w+t)$, and obtain that $Q(w)\not= \alpha P(\beta w)$ for any $\alpha,\beta\in \k^{*}$, which implies that $[(P(w),Q(w))]\not=[(Q(w),P(w))]$.

We can now choose an uncountable set $A\subset \k$, containing $0$, such that for every distinct $a_1,a_2\in A$, 
we have $[P(w+a_1),Q(w)]\not=[P(w+a_2),Q(w)]$ and  $[Q(w+a_1),P(w)]\not=[Q(w+a_2),P(w)]$ and such that for any $a\in A$, the four equivalence classes $[P(w),Q(w)]$, $[Q(w),P(w)]$, $[P(w+a),Q(w)]$ and $[Q(w+a),P(w)]$ are distinct.

For every $a\in A$, we denote by $(X_a,B_a)$ the standard pair obtained from the pair of polynomials $(P(w+a),Q(w))$ and  by $(X_a',B_a')$ the one obtained from the pair of polynomials $(Q(w+a),P(w))$. By Proposition~\ref{Prop:Tapis}, each $[(X_a,B_a)]$ and each $[(X_a',B_a')]$ is a vertex in the graph $\mathcal{F}_S=\mathcal{F}_{[P,Q]}$. The choice made on $A$ implies that the four pairs $[(X_a,B_a)]$, $[(X_b,B_b)]$, $[(X_a',B_a')]$ and $[(X_b',B_b')]$ are pairwise distinct for distinct $a,b\in A$. In particular, we obtain uncountably many vertices, which is is equivalent to $(1)$ by Proposition~\ref{Prop:IsoPairs}. 

Let $(G_i)_{i\in \mathbb{N}}$ be a countable set of algebraic subgroups $G_i\subset \Aut(S)$. For any $i\in \mathbb{N}$,  Proposition~\ref{Prop:AlgGroups}, gives a  standard pair $(X,B)$ (depending on $i$), an an isomorphism $\psi:S\stackrel{\sim}{\rightarrow} X\setminus B$ such that the conjugation of $G_i$ by $\psi$ consists of birational maps $(X,B)\dasharrow (X,B)$ being either fibered modifications, automorphisms of self-intersections. Viewing any element of $G_i$ as a birational map $(X_0,B_0)$, we can decompose it into automorphisms of pairs, fibered modifications and reversions (Proposition~\ref{mainfirst}), and the existence of $\psi$ implies that the number of such maps is then bounded. In consequence, there exists a countable set $\mathcal{S}$ of equivalence classes of pairs $(X,B)$ with $X\setminus B=S$, such that each element of each $G_i$ can be decomposed into a sequence of automorphisms of pairs, fibered modifications and reversions involving only pairs in $\mathcal{S}$. There exists thus $a\in A$ such that $[(X_a',B_a')]\notin \mathcal{S}$. We choose a reversion $\mu\colon (X_0,B_0)\dasharrow (X_a',B_a')$, and an algebraic group $\hat{G}$ of fibered modifications $(X_a',B_a')\dasharrow (X_a',B_a')$. The group $\mu^{-1}\hat{G}\mu$ yields an algebraic subgroup of $\Aut(S)$, which preserves an $\A^1$-fibration, and which is not contained in the group generated by the $G_i$. This yields $(2)$ and $(3)$.

For every $a\in A$, there exist reversions $\tau\colon (X_0,B_0)\dasharrow (X_0',B_0')$, $\sigma_{a}\colon  (X_0',B_0')\dasharrow (X_a,B_a)$, $\tau_a\colon (X_a,B_a)\dasharrow (X_a',B_a')$ and $\sigma'_a\colon (X_a',B_a')\dasharrow (X_0,B_0)$, representing the cycle 

\[\xymatrix@R=1cm@C=1cm{
[P(w),Q(w)]\ar[r]&[Q(w),P(w)]\ar[d]\\
[Q(w+a),P(w)]\ar[u]&[P(w+a),Q(w)]\ar[l]}\]
in $\mathcal{F}_S=\mathcal{F}_{P,Q}$. For every $a\in A\backslash \{0\}$, the map $\sigma'_a\tau_a\sigma_a\tau\colon (X_0,B_0)\dasharrow (X_0,B_0)$ restricts to an automorphism $\zeta_a$ of $S=X_0\setminus B_0$. The decomposition $\sigma'_a\tau_a\sigma_a\tau$ is reduced, because $[P(w),Q(w)]\not= [P(w+a),Q(w)]$ and $[Q(w),P(w)]\not=[Q(w+a),P(w)]$  (recall that the composition of two reversions is of length $2$ or is an isomorphism of pairs). We denote by $F\subset \Aut(S)$ the group generated by the $\zeta_a, a\in A\backslash \{0\}$, and will show that this one is the free group over the $\zeta_a$ and intersects $\Aut(S)_{\mathrm{alg}}$ trivially, in order to get $(4)$.
\begin{enumerate}
\item
First we observe that for every $a,b\in A\backslash \{0\}$, $a\not=b$, the decomposition
$$\zeta_a\zeta_b=\sigma'_a\tau_a\sigma_a\tau\sigma'_b\tau_b\sigma_b\tau$$
is reduced. Indeed, $\tau\sigma'_b$ is reduced because otherwise it would be an isomorphism between $(X'_b,B'_b)$ and $(X_0',B_0')$, which is not possible since $b\not=0$.
\item
Similarly, the following decomposition
$$\zeta_a(\zeta_b)^{-1}=\sigma'_a\tau_a\sigma_a(\sigma_b)^{-1}(\tau_b)^{-1}(\sigma'_b)^{-1}$$
is reduced, because otherwise $\sigma_a(\sigma_b)^{-1}$ would be an isomorphism betwen $(X_b,B_b)$ and $(X_a,B_a)$.
\item
The last case gives the following decomposition
$(\zeta_a)^{-1} \zeta_b=\tau^{-1}(\sigma_a)^{-1} (\tau_a)^{-1} (\sigma'_a)^{-1}\sigma'_b\tau_b\sigma_b\tau,$
which is again reduced, for otherwise $(\sigma'_a)^{-1}\sigma'_b$ would be an isomorphism between $(X'_b,B'_b)$ and $(X'_a,B'_a)$.
\end{enumerate}

These three observations implies that every element $h=(\zeta_{a_r})^{\delta_r}\cdots (\zeta_{a_1})^{\delta_1}\in F$, where $a_1,\dots,a_r\in A$ and $\delta_1,\dots,\delta_r\in \mathbb{Z}\backslash \{0\}$ and $a_i\not=a_{i+1}$ for $i=1,\dots,r-1$, is not trivial since it admits a reduced decomposition of positive length. This shows the freeness of $F$. By construction,  the image of $h$ in $\Pi_1(\mathcal{F}_S)$ consists of a product of loops based at $[(X_0,B_0)]$ of length $\ge 4$. Since in contrast, the image in $\Pi_1(\mathcal{F}_S)$ of every element in $\Aut(S)_{\mathrm{alg}}$ can only contain loops of length $1$ (see Remark~\ref{RemaLoop1}), it follows that $F\cap \Aut(S)_{\mathrm{alg}}$ is trivial, which completes the proof.
\end{proof}

\bibliographystyle{amsplain}

\providecommand{\bysame}{\leavevmode\hbox to3em{\hrulefill}\thinspace}
\providecommand{\MR}{\relax\ifhmode\unskip\space\fi MR }
% \MRhref is called by the amsart/book/proc definition of \MR.
\providecommand{\MRhref}[2]{%
  \href{http://www.ams.org/mathscinet-getitem?mr=#1}{#2}
}
\providecommand{\href}[2]{#2}
\begin{thebibliography}{}

\end{thebibliography}


\begin{thebibliography}{blabla}
\bibitem{first}
J. Blanc, A. Dubouloz, \emph{Automorphisms of $\mathbb{A}^1$-fibered affine surfaces}.
Trans. Amer. Math. Soc. {\bf 363} (2011), 5887-5924. 

\bibitem{Gi-Da1}
V. I. Danilov, M. H. Gizatullin, {\it  Automorphisms of affine surfaces. I.}  Izv. Akad. Nauk SSSR Ser. Mat.  {\bf 39}  (1975), no. 3, 523-565.

\bibitem{Gi-Da2}
 V. I. Danilov, M. H. Gizatullin, {\it  Automorphisms of affine surfaces. II.} Izv. Akad. Nauk SSSR Ser. Mat. {\bf 41} (1977), no. 1, 54-103.


\bibitem{Gi} 
M. H. Gizatullin,  {\it Quasihomogeneous affine surfaces},  Izv. Akad. Nauk SSSR Ser. Mat. {\bf  35}  (1971), 1047-1071.


\bibitem{Jun}
H.W.E. Jung, \emph{\"Uber ganze birationale Transformationen der Ebene}. J. reine angew. Math. {\bf 184} (1942), 161-174.

\bibitem{ML90}
L. Makar-Limanov, {\it On groups of automorphisms of a class of surfaces}, Israel J. Math. {\bf 69} (1990), no. 2, 250-256.

\bibitem{MaOo67} 
H. Matsumura, F. Oort, \emph{Representability of group functors, and automorphisms of algebraic schemes}. Invent. Math. 4 (1967) 1-25.

\bibitem{MiyBook}
M. Miyanishi, {\it Open {A}lgebraic {S}urfaces}, CRM Monogr. Ser., 12, Amer. Math. Soc., Providence, RI, 2001.

\bibitem{Ram64} 
Ramanujam, C.P., \emph{A Note on Automorphism Groups of Algebraic Varieties}, Math. Annalen 156 (1964), 25-33. 

\bibitem{Ser}
J.-P. Serre, \emph{Arbres, amalgames, $\mathrm{SL}_2$}. Ast\'erisque, No. {\bf 46.} Soci\'et\'e Math\'ematique de France, Paris, 1977.

\bibitem{bib:Sum}
H. Sumihiro, {\it Equivariant completion.} J. Math. Kyoto Univ. {\bf 14} (1974), 1--28.

\end{thebibliography}

\end{document}